\newcommand{\qednow}{\pushQED{\qed}\qedhere\popQED}
\newcommand{\Hom}       {\operatorname{Hom}}
\newcommand{\Fun}       {\operatorname{Fun}}
\newcommand{\Glob}       {\operatorname{Glob}}
\newcommand{\op}        {\operatorname{op}}
\newcommand{\laxlim}    {\operatorname*{laxlim}}
\newcommand{\laxlimdag}    {\operatorname*{laxlim^\dagger}}
\newcommand{\oplaxlim}    {\operatorname*{oplaxlim}}
\newcommand{\Stab}      {\operatorname{Stab}}
\newcommand{\Spc}{\mathcal{S}}          
\newcommand{\Ar}{\operatorname{Ar}}     
\newcommand{\ul}[1]{\underline{#1}}
\newcommand{\aug}{\mathrm{aug}}
\newcommand{\GammaS}{{\Gamma\Spc}}
\newcommand{\mySp}{{\mathscr S\kern-1.75ptp}}
\newcommand{\CB}        {{\mathbf{B}}}
\newcommand{\CC}        {{\mathcal{C}}}
\newcommand{\CD}        {{\mathcal{D}}}
\newcommand{\CE}        {{\mathcal{E}}}
\newcommand{\CI}        {{\mathcal{I}}}
\newcommand{\CJ}        {{\mathcal{J}}}
\newcommand{\CL}        {{\mathcal{L}}}
\newcommand{\CP}        {{\mathcal{P}}}
\newcommand{\CV}        {{\mathcal{V}}}
\newcommand{\CW}        {{\mathcal{W}}}
\newcommand{\CF}        {{\mathcal{F}}}
\renewcommand{\CW}      {{\mathcal{W}}}
\newcommand{\bB}        {{\mathbf{B}}}
\newcommand{\bbL}        {{\mathbb{L}}}
\newcommand{\bbR}        {{\mathbb{R}}}
\newcommand{\bbI}        {{\mathbb{I}}}
\newcommand{\bbF}        {{\mathbb{F}}}
\newcommand{\bbS}        {{\mathbb{S}}}
\newcommand{\Mod}{\operatorname{Mod}}   
\newcommand{\CAlg}{\operatorname{CAlg}} 
\newcommand{\Glo}       {\mathrm{Glo}}
\newcommand{\Orb}       {\mathrm{Orb}}
\newcommand{\Rep}		{\mathrm{Rep}}
\newcommand{\rep}		{\mathrm{rep}}
\newcommand{\Sp}        {\mathrm{Sp}}
\newcommand{\Cat}       {\mathrm{Cat}}
\newcommand{\PrLun}     {{\mathsf{Pr}^{\L}}}
\newcommand{\PrL}[2]       {{\mathsf{Pr}^{#2}_{#1}}}
\newcommand{\uPrL}[2]       {{\ul{\mathsf{Pr}}^{#2}_{#1}}}
\newcommand{\PrLVst}[2]       {{\mathsf{Pr}^{#2}_{#1,\textup{rep-st}}}}
\newcommand{\PrLast}[2]       {{\mathsf{Pr}^{#2}_{#1,\ast}}}
\newcommand{\PrLotimes}[2]	     {{\mathsf{PM}_{#1}^{#2}}}
\newcommand{\PrLVstotimes}[2]       {{\mathsf{PM}^{#2}_{#1,\textup{rep-st}}}}
\newcommand{\PrLPsemi}[2]			{{\mathrm{Pr}^{#2}_{#1,P\text{-}\oplus}}}
\newcommand{\PrLPst}[2]			{{\mathrm{Pr}^{#2}_{#1,P\textup{-}\textup{st}}}}
\renewcommand{\L}			{\mathrm{L}}
\newcommand{\st}		{\mathrm{st}}
\newcommand{\gl}		{{\mathrm{gl}}}
\newcommand{\id}        {\mathrm{id}}
\newcommand{\Free}       {{\mathcal{P}_{\scriptstyle{S}}^{\scriptstyle{T}}}}
\newcommand{\FreeOrb}       {{\mathcal{P}_\Orb^\Glo}}
\newcommand{\Spbulgl}		{\Sp_{\bullet\text{-}\mathrm{gl}}}
\newcommand{\Spcbulgl}		{\Spc_{\bullet\text{-}\mathrm{gl}}}
\newcommand{\fgt}		{\mathrm{fgt}}
\newcommand{\ev}		{\mathrm{ev}}
\DeclareMathOperator{\Nm}{Nm}
\DeclareMathOperator{\Nmadjdual}{\overline{\Nm}}
\DeclareMathOperator{\core}{core}
\newcommand{\PSh}		{{\mathrm{PSh}}}
\newcommand{\colim}  		{\operatornamewithlimits{colim}}
\newcommand{\cocolon}{\nobreak \mskip6mu plus1mu \mathpunct{}\nonscript\mkern-\thinmuskip {:}\mskip2mu \relax}
\newcommand{\Unco}[1]       {\mathrm{Un}^\mathrm{co}(#1)}
\tikzset{curve/.style={settings={#1},to path={(\tikztostart)
			.. controls ($(\tikztostart)!\pv{pos}!(\tikztotarget)!\pv{height}!270:(\tikztotarget)$)
			and ($(\tikztostart)!1-\pv{pos}!(\tikztotarget)!\pv{height}!270:(\tikztotarget)$)
			.. (\tikztotarget)\tikztonodes}},
	settings/.code={\tikzset{quiver/.cd,#1}
		\def\pv##1{\pgfkeysvalueof{/tikz/quiver/##1}}},
	quiver/.cd,pos/.initial=0.35,height/.initial=0}
\newtheoremstyle{introthms}
{}{}{\itshape}{}{\bfseries }{}{ }
{\thmname{#1} \thmnumber{#2}. \thmnote{\bfseries{(#3)}}}
\theoremstyle{introthms}
\newtheorem{introthm}{Theorem}
\newtheorem{introcor}[introthm]{Corollary}
\theoremstyle{plain}
\newtheorem*{un}{Theorem}
\theoremstyle{plain} 
\newtheorem{theorem}{Theorem}[section]
\newtheorem{lemma}[theorem]{Lemma}
\newtheorem{proposition}[theorem]{Proposition}
\newtheorem{corollary}[theorem]{Corollary}
\theoremstyle{definition}
\newtheorem{remark}[theorem]{Remark}
\newtheorem{definition}[theorem]{Definition}
\newtheorem{example}[theorem]{Example}
\newtheorem{construction}[theorem]{Construction}
\newtheorem{notation}[theorem]{Notation}
\newtheorem*{remark*}{Remark}
\title{Globalizing and stabilizing global $\infty$-categories}
\author{Sil Linskens}
\begin{document}
	
	\begin{abstract}
		We consider the question of cocompleting partially presentable parametrized $\infty$-categories in the sense of \cite{CLL_Partial}. As our main result we show that in certain cases one may compute such relative cocompletions via a very explicit formula given in terms of partially lax limits. We then apply this to equivariant homotopy theory, building on the work of \cite{CLL23} and \cite{CLL_Partial}, to conclude that the global $\infty$-category of globally equivariant spectra is the relative cocompletion of the global $\infty$-category of equivariant spectra. Evaluating at a group $G$ we obtain a description of the $\infty$-category of $G$-global spectra as a partially lax limit, extending the main result of \cite{LNP} for finite groups to $G$-global homotopy theory. Finally we investigate the question of stabilizing global $\infty$-categories by inverting the action of representation spheres, and deduce a second universal property for the global $\infty$-category of globally equivariant spectra, similar to that of \cite{LS23}.
	\end{abstract}
	
	\maketitle
	\setcounter{tocdepth}{1}
	\tableofcontents

	\section{Introduction}
	
	Global homotopy theory studies objects which have a ``compatible" action of all (compact Lie) groups within a designated family. For instance, consider ($G$-)equivariant $K$-theory, equivariant (stable) bordism, stable cohomotopy, and Borel cohomology. Each of these $G$-equivariant cohomology theories admits a definition which is in some sense uniform in the group $G$. As such each cohomology theory should, and in fact does, define a global stable homotopy type, that is an object of the $\infty$-category\footnote{We work in the context of higher category theory throughout, and so we will in what follows refer to $\infty$-categories simply as `categories.'} $\Sp_{\gl}$ of \textit{global spectra} in the sense of \cite{Schwede18}. In the unstable setting one studies $\Spc_{\gl}$, the category of \textit{global spaces} as originally defined by \cite{GH} (where they are called $\Orb$-spaces). Once again the objects of $\Spc_{\gl}$ should be spaces which are equipped with a collection of compatible actions. 
	
	However unstable global homotopy theory is more immediately described as the homotopy theory of spaces with singularities which locally look like the quotient of a space by a group action, i.e.~of orbispaces. In fact, the precise definition of neither global spaces nor global spectra is obviously an implementation of the initial motivation, that a global object should be a compatible family of equivariant objects. Nevertheless, in recent work Denis Nardin, Luca Pol and the author \cite{LNP} have proven a conjecture of Stefan Schwede, which shows that this heuristic does in fact lead to a precise definition. Namely the result shows that, informally, the data of a global space/spectrum is equivalent to the data of
	\begin{itemize}
		\item a $G$-space/spectrum $\mathrm{res}_G X$ for each group $G$ in the designated family,
		\item an $H$-equivariant map $f_\alpha\colon \alpha^*\mathrm{res}_G X\to \mathrm{res}_H X$ for each continuous group homomorphism $\alpha\colon H \to G$,
		\item a homotopy between the map $f_{c_g}$ induced by the conjugation isomorphism $c_g\colon G\rightarrow G$ and the map $l_g \colon c_g^* \mathrm{res}_G X \to \mathrm{res}_G X$ given by left multiplication by $g$,
		\item higher coherences for the homotopies,
	\end{itemize}
	which satisfy the following compatibility conditions:
	\begin{itemize}
		\item the maps $f_\alpha$ are functorial, so that $f_{\beta\circ \alpha}\simeq f_\beta\circ \beta^*(f_{\alpha})$ for all composable maps $\alpha$ and $\beta$, and $f_{\mathrm{id}}=\mathrm{id}$,
		\item the map $f_\alpha$ is an equivalence for every continuous \emph{injective} homomorphism $\alpha$.
	\end{itemize}
	
	The following theorem is the precise formulation of this.
	
	\begin{un}[\cite{LNP}*{Theorem 6.18, 11.10}]\label{thm:Sp_gl_lax_lim}
		There exist equivalences of symmetric monoidal categories
		\[
		\Spc_{\gl}\simeq \laxlimdag_{\Orb^{\op}\subset \Glo^{\op}} \Spc_\bullet \quad \text{and} \quad \Sp_{\gl} \simeq \laxlimdag_{\Orb^{\op}\subset \Glo^{\op}} \Sp_\bullet.
		\]
	\end{un}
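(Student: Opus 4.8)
The plan is to prove each equivalence by a recognition principle: construct a colimit-preserving comparison functor, reduce to a generating set, and match the relevant mapping spaces. I describe the unstable case in detail; the stable case then runs in parallel and the symmetric monoidal statements are formal.

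Recall the presheaf models. Write $\Orb_{\gl}\subseteq\Glo$ for the wide subcategory of \emph{faithful} homomorphisms; then $\Spcgl\simeq\PSh(\Orb_{\gl})$ by \cite{GH}, and Elmendorf's theorem gives $\Spc_G\simeq\PSh(\Orb_{/G})$ with $\Orb_{/G}$ the slice of $\Orb_{\gl}$ over $G$. Under these identifications $\Spc_\bullet\colon\Glo^{\op}\to\Cat$ is $G\mapsto\PSh(\Orb_{/G})$, the transition along $\alpha\colon H\to G$ being restriction along the ``image'' functor $\Orb_{/H}\to\Orb_{/G}$, $(K\hookrightarrow H)\mapsto(\alpha(K)\hookrightarrow G)$ --- a functor that is fully faithful exactly when $\alpha$ is faithful. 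I would first write down the comparison $\mathrm{res}\colon\Spcgl\to\laxlimdag_{\Orb^{\op}\subseteq\Glo^{\op}}\Spc_\bullet$ sending $X$ to the family $\mathrm{res}_G X:=X|_{\Orb_{/G}}$ (restriction along $(H\hookrightarrow G)\mapsto H$), with coherence maps $\alpha^*\mathrm{res}_G X\to\mathrm{res}_H X$ supplied by functoriality of $X$; these are equivalences over $\Orb$ precisely because the image functor is then fully faithful, so $\mathrm{res}$ factors through the partial lax limit. Being pointwise a restriction, $\mathrm{res}$ is cocontinuous, and since its target is presentable by the machinery of \cite{CLL_Partial} it admits a right adjoint.

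The crux is to compute the composites $\mathrm{ev}_H\circ\lambda_G\colon\Spc_G\to\Spc_H$, where $\lambda_G$ is the left adjoint of the evaluation $\mathrm{ev}_G\colon\laxlimdag_{\Orb^{\op}\subseteq\Glo^{\op}}\Spc_\bullet\to\Spc_G$ (which exists, and whose images generate, by \cite{CLL_Partial}). The Kan-extension formula for such a composite in a partial lax limit presents it as a colimit, over a category of $\dagger$-admissible zigzags from $H$ to $G$ in $\Glo$, of the transition functors; organizing this colimit along the faithful factorizations $H\twoheadrightarrow\mathrm{im}\hookrightarrow G$ of homomorphisms should collapse it, so that $\mathrm{ev}_H\lambda_G$ applied to the terminal object of $\Spc_G$ is the presheaf $(K\hookrightarrow H)\mapsto\Orb_{\gl}(K,G)$ on $\Orb_{/H}$ --- equivalently, $\mathrm{res}(yG)\simeq\iota_G:=\lambda_G(\ast_{\Spc_G})$ (and the cocartesian-over-$\Orb$ constraint forces $\lambda_G(y(K\hookrightarrow G))\simeq\iota_K$, so the $\iota_G$ alone generate). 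Granting this, $\lambda_G\dashv\mathrm{ev}_G$ and the Yoneda lemma in $\Spc_H$ yield an equivalence $\mathrm{Map}(\iota_H,\iota_G)\simeq\Orb_{\gl}(H,G)=\mathrm{Map}_{\PSh(\Orb_{\gl})}(yH,yG)$ in the partial lax limit, compatibly with composition; since the $yG$ generate $\Spcgl$ and the $\iota_G$ generate the target under colimits, $\mathrm{res}$ is then a cocontinuous, fully faithful functor out of a cocomplete category whose essential image generates the target --- hence an equivalence.

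I expect the computation of $\mathrm{ev}_H\lambda_G$ to be the main obstacle: the ``lax'' part of the formula a priori ranges over all homomorphisms $H\to G$, and the real work is to see that cocartesian-ness over $\Orb$ trims it down to exactly the faithful contribution. The same content can be packaged as the assertion that $\PSh$ carries the partial lax colimit $\laxcolimdag_{\Orb\subseteq\Glo}\Orb_{/\bullet}$ to the partial lax limit of the theorem, together with an identification $\laxcolimdag_{\Orb\subseteq\Glo}\Orb_{/\bullet}\simeq\Orb_{\gl}$; this is the route I would actually take, reducing everything to those two facts. The stable statement is proved by the same strategy applied to the genuine models of $G$-spectra and global spectra in their Mackey-functor descriptions, building on \cite{CLL23}; it is not a purely formal consequence of the unstable equivalence. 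Finally the symmetric monoidal refinement needs no extra input: $\Spcgl$ and $\Spgl$ carry the Day-convolution structures induced by the product on $\Orb_{\gl}$, the partial lax limits are symmetric monoidal because $\Spc_\bullet$ and $\Sp_\bullet$ are diagrams of symmetric monoidal categories and symmetric monoidal functors, $\mathrm{res}$ is symmetric monoidal, and a symmetric monoidal functor underlying an equivalence of categories is a symmetric monoidal equivalence.
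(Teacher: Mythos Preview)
Your approach differs substantially from the paper's. The statement is quoted from \cite{LNP}, and the paper does not attach a direct proof to it; instead it re-derives the result (for finite groups, at the trivial group) as a byproduct of its main machinery: one shows that $\Glob$ is left adjoint to the forgetful functor $\PrL{\Glo}{\L}\to\PrL{\Glo}{\Orb}$ and preserves equivariant stability, and then compares the universal properties of \Cref{thm:Sp_gl_univ} and \Cref{thm:Sp_univ} to conclude $\Spbulgl\simeq\Glob(\Sp_\bullet)$; evaluating at $\CB e$ gives the stable equivalence, while the unstable one is \Cref{cor:Glob_P_spc}. No generators or mapping spaces are compared directly. What you sketch is much closer in spirit to the original argument of \cite{LNP}.

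There is, however, a genuine error in your proposal. You set $\Orb_{\gl}\subseteq\Glo$ to be the wide subcategory of \emph{faithful} homomorphisms and then assert $\Spcgl\simeq\PSh(\Orb_{\gl})$; this is false. Global spaces are presheaves on $\Glo$, not on its faithful subcategory: Gepner--Henriques' topological category ``Orb'' is, for finite groups, equivalent to what the present paper calls $\Glo$, and indeed \Cref{ex:eq_spaces} identifies $\Spc_{e\text{-}\gl}$ with $\PSh(\Glo)$. Your mapping-space computation inherits the same mistake: you claim $\mathrm{ev}_H\lambda_G(\ast)$ is the presheaf $(K\hookrightarrow H)\mapsto\Orb_{\gl}(K,G)$, but carrying out the left Kan extension you describe actually gives $(K\hookrightarrow H)\mapsto\Glo(K,G)$, since the comma category indexing the colimit has objects $(L\leq G,\ \phi\colon K\to L)$ and the image factorization retracts it onto \emph{all} homomorphisms $K\to G$, not just the injective ones. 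With both corrections the recognition-principle strategy is salvageable, but the ``collapse'' you flag as the main obstacle is exactly the content, and the stable case requires more than the one sentence you allot it.
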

	
	To unpack the notation slightly, we recall that $\Glo$ denotes the \textit{global indexing category} whose objects are given by a fixed family of compact Lie groups and whose morphisms are given by group homomorphisms up to conjugation, while $\Orb$ denotes the wide subcategory thereof spanned by the injective group homomorphisms. The notation $\laxlimdag F$ above refers to the \textit{partially lax limit} of a functor $F\colon \Glo^{\op}\rightarrow \Cat$. The base $\Glo$ is marked by the subcategory $\Orb$, and so the objects of the partially lax limit are lax \textit{away} from $\Orb$. We refer the reader to \Cref{def:partially_lax_limits} for a quick definition of partially lax limits, and \cite{LNP}*{Section 4} for an extended discussion. Most importantly for us, they provide a means for making precise the informal characterization of global spaces/spectra given before the theorem.
	
	While the previous theorem provides a useful description of the categories of global spectra and global spaces, it is slightly unmotivated: there are a priori many ways to make precise the data of a compatible collection of equivariant objects, and we may wonder why we should give priority to this choice? In this article we provide one answer by showing that, for the family of finite groups, the partially lax limit above comes about by universally solving a defect of the diagram $\Sp_\bullet$ of equivariant spectra. Explaining this defect is most convenient from the perspective of \textit{global categories}, which we recall now.
	
	In mathematics one often wants to study objects which come equipped with an action of a group. Typically these collections of $G$-equivariant objects assemble into a category, which becomes the main object of study. In the process of understanding such categories one crucially uses their functoriality in the group $G$\hspace{1pt}; i.e.~the fact that one can restrict actions along group homomorphisms. Thinking systematically about this functoriality leads to the definition of a \textit{global category}, which is roughly the data of 
	\begin{enumerate}
		\item a category $\CC_G$ for every finite group $G$\hspace{1pt};
		\item a restriction functor $\alpha^*\colon \CC(G) \to \CC(H)$ for every homomorphism $\alpha\colon H \to G$\hspace{1pt};
		\item higher structure which in particular witnesses that conjugate morphisms induce the same restriction functor.
	\end{enumerate}
	More precisely we define a global category to be a functor \[\CC\colon \Glo^{\op}\rightarrow \Cat,\quad \CB G\mapsto \CC_G.\] In the previous definition and for the remainder of this article, $\Glo$ will refer to the global indexing category for the family of finite groups. In particular $\Glo$ is equivalent to the $(2,1)$-category of finite connected groupoids.
	
	The study of global categories is the study of the abstract representation theory of finite groups, understood in a very broad sense. This study of course has a long history, but in this exact formalism was begun in \cite{CLL23}. The perspective taken there was that of parametrized/internal higher category theory, in the sense of \cite{exposeI} and \cite{MW21} respectively. This is a robust generalization of higher category theory, which comes with its own notions of adjunctions, colimits and so on. Applying these notions to global categories one recovers properties familiar from representation theory. For example a global category $\CC$ admitting certain parametrized colimits, which we call \textit{equivariant colimits}, if each category $\CC_G$ admits colimits preserved by restriction, and there exist induction functors $\mathrm{ind}_H^G\colon \CC_H\mapsto \CC_G$ left adjoint to the restriction $i^*\colon \CC_G\rightarrow \CC_H$ along an inclusion $i\colon H \rightarrow G$, which furthermore satisfy an analogue of the classical double coset formula. Having all parametrized colimits implies the further existence of a quotient functor $(-)/N\colon \CC_G\rightarrow \CC_{G/N}$ which is left adjoint to the inflation functor $p^*\colon \CC_{G/N}\rightarrow \CC_G$ for every surjective homomorphism $p\colon G\rightarrow G/N$ of finite groups.
	
	With this background we can again consider the diagram
	\[\Sp_\bullet \colon \Glo^{\op}\rightarrow \Cat, \quad G \mapsto \Sp_G\]
	sending the group $G$ to $\Sp_G$, the category of genuine $G$-spectra. The crucial observation is that as a global category it suffers from one defect: it does not admit all parametrized colimits. This is a consequence of the fact that the restriction functor $\alpha^*\colon \Sp_G\rightarrow \Sp_H$ does not admit a left adjoint when $\alpha$ is a \textit{non-injective} group homomorphism, see \Cref{ex:eq_spectra}. In other words, it is not possible to construct a quotient functor $(-)/N\colon \Sp_G\rightarrow \Sp_{G/N}$ left adjoint to inflation. Nevertheless it does admit equivariant colimits, in the sense of the previous paragraph.
	
	In this paper we will show that one can freely add the missing parametrized colimits to a nice global category which admits equivariant colimits, and moreover that the value of the resulting global category at a group $G$ admits an explicit formula in terms of partially lax limits. Motivated by examples, we call this process \emph{globalization}. From this it follows that $\Sp_\gl$ naturally arises from the process of freely adding the missing parametrized colimits to the global category $\Sp_\bullet$. In fact we will obtain a stronger result. The category $\Sp_{\gl}$ admits a canonical parametrized enhancement 
	\[\Spbulgl \colon \Glo^{\op}\rightarrow \Cat,\quad \CB G \mapsto \Sp_{G\text{-}\gl}\] given by sending the groupoid $\CB G$ to the category of $G$-global spectra, as defined by \cite{LenzGglobal}. We call this the global category of \emph{globally equivariant spectra}. As the main result of this paper we will prove that this is equivalent to the globalization of $\Sp_\bullet$. To state the theorems we require some notation; careful definitions will be given in the body of the paper.
	
	\begin{definition}
		A global category $\CC$ is called \textit{equivariantly presentable} if $\CC\colon \Glo^{\op}\rightarrow \Cat$ factors through the subcategory $\PrLun$, the functor $\mathrm{res}^G_H \colon \CC_G \rightarrow \CC_H$ admits a left adjoint $\mathrm{ind}_H^G\colon \CC_H\rightarrow \CC_G$ for every subgroup inclusion $H\subset G$, and these left adjoints satisfy a categorified double coset formula, also known as the Beck--Chevalley condition. Similarly $\CC$ is called \textit{globally presentable} if, moreover, the restriction functors along surjective group homomorphisms also admits left adjoints which also satisfy the Beck--Chevalley condition.
		
		We define $\PrL{\Glo}{\Orb}$ and $\PrL{\Glo}{\L}$ to be the categories of equivariantly presentable and globally presentable global categories respectively.
	\end{definition}
	
	\begin{definition}
		Let $\CC$ be a global category, then we define a functor 
		\[\Glob(\CC)\colon \Glo^{\op}\rightarrow \Cat,\] 
		called the \textit{globalization} of $\CC$, via the assignment 
		\[\Glob(\CC)_G = \laxlimdag((\Glo_{/G})^{\op}\rightarrow \Glo^{\op}\xrightarrow{\CC} \Cat),\]
		where an edge is marked in $\Glo_{/G}^{\op}$ is marked if its projection to $\Glo^{\op}$ is a faithful functor of groupoids, i.e.~lands in $\Orb^{\op}$. The functoriality of this assignment in $\Glo^{\op}$ is induced by the contravariant functoriality of partially lax limits applied to the pushforward functoriality of the slices ${\Glo}_{/G}$. 
	\end{definition}

	\begin{introthm}\label{intro_thm_A}
		Suppose $\CC$ is an equivariantly presentable global category. Then $\Glob(\CC)$ is a globally presentable global category. Furthermore the restriction \[\Glob\colon \PrL{\Glo}{\Orb} \rightarrow\PrL{\Glo}{\L}\] is left adjoint to the (non-full) inclusion $\PrL{\Glo}{\L}\rightarrow \PrL{\Glo}{\Orb}$.
	\end{introthm}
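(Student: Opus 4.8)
The plan is to exhibit $\Glob(\CC)$ as the relative cocompletion asked for, i.e.~to construct a unit $\eta\colon \CC \to \Glob(\CC)$ in $\PrL{\Glo}{\Orb}$ and show that for every globally presentable $\CD$ precomposition with $\eta$ induces an equivalence
\[
\Fun_{\PrL{\Glo}{\L}}(\Glob(\CC),\CD)\ \xrightarrow{\ \sim\ }\ \Fun_{\PrL{\Glo}{\Orb}}(\CC,\CD).
\]
It is useful to organize this using the language of partially presentable parametrized categories from \cite{CLL_Partial}: equivariant presentability of a global category is fibrewise presentability together with left adjoints (with Beck--Chevalley) to restriction along \emph{injective} homomorphisms, while global presentability additionally requires them along \emph{surjective} homomorphisms, hence along \emph{all} homomorphisms since every homomorphism of finite groups factors as a surjection followed by an injection. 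This $(\mathrm{surjective},\mathrm{injective})$ factorization system on $\Glo$, whose right class is exactly $\Orb$, is the structural input that makes the explicit formula work; one also uses that the marked slices $\Glo_{/G}$ inherit an analogous factorization system compatibly with the pushforward functors $\alpha_!\colon \Glo_{/H}\to\Glo_{/G}$, which moreover preserve markings because they do not change the underlying homomorphism.

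The first step is to verify that $\Glob(\CC)$ is a global category valued in $\PrLun$: each $\Glob(\CC)_G$ is a partially lax limit of presentable categories along colimit-preserving functors, hence presentable, and the functoriality in $G$---pushforward of slices followed by the contravariant functoriality of partially lax limits---is by colimit-preserving functors. The second, and main, step is to show that $\Glob(\CC)$ is in fact \emph{globally} presentable. Left adjoints to restriction along injective homomorphisms are induced from right adjoints to the corresponding $\alpha_!$ on marked slices (which exist precisely because $\Orb$ is the right class of the factorization system) together with the functoriality of partially lax limits. Left adjoints $(-)/N$ to restriction along a surjection $p\colon G\to G/N$ are the genuinely new feature; they exist because the partially lax limit is lax exactly in the surjective direction, and are constructed as partial lax left Kan extensions along $p_!\colon \Glo_{/G}\to\Glo_{/(G/N)}$. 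The Beck--Chevalley identities are then reduced to base-change compatibilities of partially lax limits and the corresponding strict statements about the pushforward and pullback functors between slice categories.

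For the universal property, the unit $\eta\colon\CC\to\Glob(\CC)$ sends $X\in\CC_G$ to the strict cone $(H\xrightarrow{\alpha}G)\mapsto\alpha^\ast X$ with canonical structure equivalences; this preserves fibrewise colimits immediately (since each $\alpha^\ast$ does) and, by the construction of the inductions above, also the $\Orb$-indexed left adjoints, so it is a morphism in $\PrL{\Glo}{\Orb}$. Given a globally presentable $\CD$, the inverse to $\eta^\ast$ sends $F\colon\CC\to\CD$ to the parametrized left Kan extension of $F$ along $\eta$; because $\CD$ admits all $\Glo$-colimits this left Kan extension exists and preserves them (and the Beck--Chevalley data), and one checks using the explicit description of $\Glob(\CC)$ as a partially lax limit---concretely, that every object is built from strict cones by $\Glo$-colimits---that it is genuinely inverse to $\eta^\ast$. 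Naturality of all these constructions in $\CC$ then upgrades this to the asserted adjunction.

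I expect the main obstacle to be the construction of the left adjoints to restriction along surjective homomorphisms and the verification of their Beck--Chevalley compatibility in the second step: this is where one must genuinely develop the calculus of partially lax limits and of partial lax Kan extensions, and where the precise interaction between the marking of $\Glo_{/G}$ and the factorization system on $\Glo$ enters. By comparison, the presentability bookkeeping of the first step and the manipulations with parametrized Kan extensions in the universal-property step are formal once this calculus is in place.
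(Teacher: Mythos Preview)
Your outline diverges from the paper's proof in two substantive ways, and the first of these contains a real gap.

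\textbf{Left adjoints and Beck--Chevalley.} You propose to construct the left adjoints to restriction separately for injective and surjective homomorphisms, using the factorization system on $\Glo$, and in the surjective case you invoke ``partial lax left Kan extensions along $p_!\colon\Glo_{/G}\to\Glo_{/(G/N)}$.'' This last step is not well-defined as stated: for surjective $p$ the pushforward $p_!$ on slices has no evident adjoint of the right variance, so there is no off-the-shelf Kan extension to appeal to, and the phrase ``the partially lax limit is lax exactly in the surjective direction'' does not by itself produce a functor. The paper avoids this entirely. It shows that for \emph{every} $f$ in $\bbF_\Glo$ the restriction $f^*\colon\Glob(\CC)_Y\to\Glob(\CC)_X$ preserves both colimits and limits (the former because $\CC$ lands in $\PrLun$, the latter because the marking forces strictness along $\Orb$ and $\CC$ satisfies $\Orb$-base-change), and then invokes the adjoint functor theorem uniformly. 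No explicit formula for the \emph{left} adjoint is ever needed. For Beck--Chevalley the paper passes to total mates and checks the condition on the \emph{right} adjoints $f_*$, for which one does have an explicit description coming from the adjunction $f_!\dashv f^*$ on slices (here orbitality of $\Glo$ itself, not the factorization system, is what matters). Your proposed direct verification on the left-adjoint side would require exactly the explicit description you do not have.

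\textbf{The universal property.} You argue via parametrized left Kan extension and a generation statement (``every object is built from strict cones by $\Glo$-colimits''). The paper instead constructs both unit and counit explicitly---the unit $I$ as the inclusion of the strict limit into the partially lax limit, the counit as the parametrized left adjoint to $I$ when the target is globally presentable---and then verifies the triangle identities, using a description of $\Glob(\Glob(\CC))_X$ as a partially lax limit over $\Ar(\bbF_{\Glo/X})^{\op}$. Your approach could be made to work, but the generation claim is not obvious and would itself require something like the paper's analysis; the triangle-identity route is cleaner because it never leaves the world of explicit slice manipulations.
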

	
	See \Cref{thm:Free_colimit} for a more precise statement, which is stated in the generality of a suitable pair $(T,S)$ of a category $T$ and a subcategory $S\subset T$. More precisely we require $T$ to be an orbital category and $S$ to be an orbital subcategory of $T$. To prove this result we require a long list of results about partially lax limits, which we collect in \Cref{sec:appendix_A}. For example we give sufficient conditions for the existence of (co)limits in a partially lax limit of categories, and give two criteria via which one obtains adjunctions between two partially lax limits.
	
	We can now apply this to the global category $\Sp_\bullet$. Using the main results of previous joint work \cite{CLL23} and \cite{CLL_Partial} of Bastiaan Cnossen, Tobias Lenz and the author we prove:
	
	\begin{introthm}\label{intro_thm_B}
		There exists an equivalence 
		\[\Spbulgl \simeq \Glob(\Sp_\bullet)\] 
		of global categories.
	\end{introthm}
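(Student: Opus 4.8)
The strategy is to recognize $\Spbulgl$ as the globalization of $\Sp_\bullet$ by means of the universal property of $\Glob$ established in \Cref{intro_thm_A}, and to verify that the resulting comparison functor is an equivalence by reducing, objectwise, to the theorem of \cite{LNP}.

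First I would assemble the structural inputs from prior work: that $\Sp_\bullet$ is an equivariantly presentable global category, so that its globalization is defined (this is among the main results of \cite{CLL23}), and that $\Spbulgl$ is a globally presentable global category (by \cite{CLL_Partial}). Next I would produce a morphism $u \colon \Sp_\bullet \to \Spbulgl$ in $\PrL{\Glo}{\Orb}$, given objectwise by the canonical colimit-preserving functor $\Sp_G \to \Sp_{G\text{-}\gl}$ comparing genuine $G$-spectra with $G$-global spectra, as constructed in \cite{LenzGglobal}; the substance of this step is to verify that these functors are natural in $G \in \Glo^{\op}$ and satisfy the Beck--Chevalley condition, i.e.~intertwine the genuine and $G$-global induction functors along subgroup inclusions, a compatibility that can be tested on the compact generators. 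Applying the adjunction $\Glob \dashv \iota$ of \Cref{intro_thm_A}, with $\iota \colon \PrL{\Glo}{\L} \hookrightarrow \PrL{\Glo}{\Orb}$ the inclusion, the morphism $u$ corresponds to a comparison functor
\[
\bar u \colon \Glob(\Sp_\bullet) \longrightarrow \Spbulgl
\]
of globally presentable global categories, and it remains to show $\bar u$ is an equivalence. As equivalences of global categories are detected objectwise, it suffices to prove that each $\bar u_G \colon \Glob(\Sp_\bullet)_G \to \Sp_{G\text{-}\gl}$ is an equivalence.

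For $G$ the trivial group, $\Glo_{/G} \simeq \Glo$ with its marking by $\Orb$, so the explicit formula for $\Glob$ identifies $\Glob(\Sp_\bullet)_G$ with $\laxlimdag_{\Orb^{\op}\subset\Glo^{\op}} \Sp_\bullet$; under this identification $\bar u_G$ is the comparison functor of \cite{LNP}*{Theorem 11.10} — both being characterized by their compatibility with restriction to the categories $\Sp_H$ of genuine $H$-spectra — and is therefore an equivalence. For general $G$ I would argue in two steps. First, $\bar u$ preserves parametrized colimits, and by \cite{CLL23} the global category $\Spbulgl$ is generated under parametrized colimits by the global sphere $\mathbb{S}_{\gl}$, which lies in the essential image of $\bar u$; since the essential image of a parametrized-colimit-preserving morphism of global categories is closed under parametrized colimits, $\bar u_G$ is essentially surjective for every $G$. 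Second, for fully faithfulness one reduces to mapping spectra between the standard compact generators of $\Glob(\Sp_\bullet)_G$ and $\Sp_{G\text{-}\gl}$: these are obtained at every level from the compact generators at the trivial group by inflation along surjections and induction along subgroup inclusions, and $\bar u$ carries them to one another. Using that $\bar u$ commutes with restriction, induction and quotients (the latter two being part of the Beck--Chevalley data of a morphism in $\PrL{\Glo}{\L}$), together with the induction--restriction adjunction, the double coset formula, and the ambidexterity of induction along subgroup inclusions, a mapping spectrum between two such generators at level $G$ can be rewritten entirely in terms of mapping spectra at the trivial group, where $\bar u_e$ is already known to be an equivalence. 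Hence each $\bar u_G$ is an equivalence, which proves the theorem.

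The step I expect to be the main obstacle is this last reduction for general $G$. The identification of the standard compact generators of the partially lax limit $\Glob(\Sp_\bullet)_G$, and the verification that they generate, rest on a sufficiently explicit understanding of parametrized colimits in partially lax limits, for which the results collected in \Cref{sec:appendix_A} are designed; and controlling precisely how $\bar u$ interacts with the parametrized adjoints, so that the mapping-spectrum computation really does close up at the level of the trivial group, is the genuinely delicate point, relying on the analysis of $\Sp_\bullet$ and $\Spbulgl$ in \cite{CLL23} and \cite{CLL_Partial}. Once \Cref{intro_thm_B} is established, combining it with the explicit formula for $\Glob$ yields at once the description of $\Sp_{G\text{-}\gl}$ as the partially lax limit of $\Sp_\bullet$ over $(\Glo_{/G})^{\op}$ for every finite group $G$, extending the main result of \cite{LNP}.
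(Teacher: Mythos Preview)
Your approach is genuinely different from the paper's, and while it is plausible in outline, it both misses the paper's key lemma and relies on an input the paper deliberately avoids.

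The paper's argument is a two-line comparison of universal properties. It first proves that globalization preserves $P$-semiadditivity and $P$-stability (\Cref{prop:rel_cocomp_semi}); in particular $\Glob(\Sp_\bullet)$ is equivariantly stable. Since $\Sp_\bullet$ is the free \emph{equivariantly} presentable equivariantly stable global category on a point (\cite{CLL_Partial}) and $\Spbulgl$ is the free \emph{globally} presentable equivariantly stable global category on a point (\cite{CLL23}), the adjunction $\Glob\dashv\fgt$ forces $\Glob(\Sp_\bullet)\simeq\Spbulgl$. No objectwise checking, no generators, no mapping-spectrum computations---and crucially no appeal to \cite{LNP}: the paper emphasizes that \Cref{intro_thm_C} then gives a \emph{new} proof of the partially lax limit description even at $G=e$.

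By contrast, your strategy bootstraps from the $G=e$ case of \cite{LNP} and then attempts to propagate to general $G$ by hand. Besides forfeiting the independence from \cite{LNP}, the propagation step is where your sketch is thinnest: the claim that mapping spectra between compact generators at level $G$ can be ``rewritten entirely in terms of mapping spectra at the trivial group'' is not obviously true for $G$-global spectra, whose compact generators involve homomorphisms $K\to G$ that are not compositions of an inflation and an induction. You also have the attributions swapped: equivariant presentability of $\Sp_\bullet$ is \cite{CLL_Partial}, global presentability of $\Spbulgl$ is \cite{CLL23}. The missing idea in your proposal is precisely \Cref{prop:rel_cocomp_semi}; once you know $\Glob$ preserves equivariant stability, the universal properties you already cite finish the proof immediately and the whole objectwise analysis becomes unnecessary.
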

	
	This theorem is however much more then just a consistency check. For example it has the following significant non-parametrized consequence.
	
	\begin{introcor}\label{intro_thm_C}
		Let $G$ be a finite group. There exists an equivalence
		\[
		\Sp_{G\text{-}\gl} \simeq \laxlimdag_{(\Glo_{/G})^{\op}} \Sp_\bullet.
		\] 
		
	\end{introcor}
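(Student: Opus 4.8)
The plan is to obtain \Cref{intro_thm_C} as an immediate consequence of \Cref{intro_thm_B}, by evaluating the equivalence of global categories established there at the object $\CB G\in\Glo^{\op}$. First I would recall from the discussion preceding that theorem that the parametrized enhancement $\Spbulgl\colon\Glo^{\op}\rightarrow\Cat$ is defined precisely so as to send the groupoid $\CB G$ to the category $\Sp_{G\text{-}\gl}$ of $G$-global spectra in the sense of \cite{LenzGglobal}; hence evaluating the left-hand side of the equivalence in \Cref{intro_thm_B} at $\CB G$ returns $\Sp_{G\text{-}\gl}$. On the other hand, the globalization $\Glob(\Sp_\bullet)$ was by construction given at $G$ by
\[
\Glob(\Sp_\bullet)_G = \laxlimdag\bigl((\Glo_{/G})^{\op}\rightarrow\Glo^{\op}\xrightarrow{\ \Sp_\bullet\ }\Cat\bigr),
\]
where an edge of $(\Glo_{/G})^{\op}$ is marked exactly when its image in $\Glo^{\op}$ lands in $\Orb^{\op}$. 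Transporting the equivalence of \Cref{intro_thm_B} through evaluation at $\CB G$ then produces the asserted equivalence $\Sp_{G\text{-}\gl}\simeq\laxlimdag_{(\Glo_{/G})^{\op}}\Sp_\bullet$.

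Essentially all of the mathematical content is therefore already packaged into \Cref{intro_thm_A} and \Cref{intro_thm_B}. In particular, the main obstacle is not at the level of this corollary at all: it lies in proving \Cref{intro_thm_B}, whose input is that $\Sp_\bullet$ is an equivariantly presentable global category --- a fact drawn from \cite{CLL23} and \cite{CLL_Partial} --- together with the identification of its globalization with the global category of globally equivariant spectra. Granting \Cref{intro_thm_B}, the only work left in the present statement is the purely formal unwinding of the definitions of $\Glob$ and of $\Spbulgl$, so I expect no further difficulty here.

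Finally, as a consistency check I would specialize to $G$ the trivial group: then $\Glo_{/G}\simeq\Glo$ equipped with its marking by $\Orb^{\op}$, and since $\Sp_{e\text{-}\gl}\simeq\Sp_\gl$ the corollary specializes to the equivalence $\Sp_\gl\simeq\laxlimdag_{\Orb^{\op}\subset\Glo^{\op}}\Sp_\bullet$ of \cite{LNP} for the family of finite groups. This both confirms the bookkeeping of the markings and makes precise the sense in which \Cref{intro_thm_C} extends the main result of \cite{LNP} to $G$-global homotopy theory.
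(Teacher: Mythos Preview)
Your proposal is correct and matches the paper's approach exactly: the paper states this corollary immediately after proving \Cref{intro_thm_B} and dispatches it with a bare \texttt{\textbackslash qednow}, i.e.\ by evaluating the equivalence $\Spbulgl\simeq\Glob(\Sp_\bullet)$ at $\CB G$ and unwinding the definitions of both sides precisely as you describe. Your added consistency check at $G=e$ is a nice sanity remark but is not needed for the argument.
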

	
	For the family of finite groups, this generalizes the main result of \cite{LNP} from global to $G$-global homotopy theory. We also emphasize that this result is independent of \cite{LNP}, and so gives a new proof even when $G$ is the trivial group. 
	
	By previous work \cite{CLL23}, the global category $\Spbulgl$ has the advantage that it admits a universal property: it is the free globally presentable equivariantly stable global category on a point. Informally, an equivariantly presentable global category is equivariantly stable if each category $\CC_G$ is stable, and the functors $\mathrm{ind}^G_H$ are also right adjoint to restriction. This universal property is in fact the crucial ingredient for the previous result. By \cite{CLL_Partial}, $\Sp_\bullet$ is itself the free equivariantly presentable equivariantly stable global category on a point. Therefore the theorem above follows from the fact that globalization preserves equivariant stability, as we show in \Cref{prop:rel_cocomp_semi}.
	
	\subsection{Representation stability}

	Having recognized the global category $\Spbulgl$ of globally equivariant spectra as the globalization of the global category $\Sp_\bullet$ of equivariant spectra, we can immediately deduce universal properties for the former from universal properties of the latter. One such universal property is very close to the definition of genuine equivariant spectra: $\Sp_G$ is given by inverting the representations spheres in $\Spc_{G,\ast}$, the category of pointed $G$-spaces. To discuss this systematically, we recall that an equivariantly presentable global category $\CC$ is pointed if $\CC_G$ is pointed for all $\CB G\in \Glo$. In this case one can construct a tensoring of $\CC$ by $\Spc_{\bullet,\ast}$, the global category of pointed equivariant spaces. Given this we can formulate the following definition.
	
	\begin{definition}
		We say a pointed equivariantly presentable global category $\CC$ is \emph{$\Rep$-stable} if for every $G\in \Glo$ and every $G$-representation $V$ the functor 
		\[S^V\otimes - \colon \CC_G\rightarrow \CC_G\] 
		is an equivalence. We write $\PrLVst{\Glo}{\Orb}$ for the full subcategory of $\PrL{\Glo}{\Orb}$ spanned by the $\Rep$-stable equivariantly presentable global categories.
	\end{definition}
	
	Now consider an arbitrary pointed equivariantly presentable global category $\CC$. As we make precise in \Cref{def:Stab_Orb}, inverting the action of the representation spheres pointwise defines a new global category, which we denote by $\Stab^{\Orb}(\CC)$. 
	
	One can show that $\Stab^{\Orb}(\CC)$ is a $\Rep$-stable equivariantly presentable global category, and furthermore that the functor 
	\[
	\Stab^{\Orb}\colon \PrL{\Glo}{\Orb}\rightarrow \PrLVst{\Glo}{\Orb}
	\] 
	defines a left adjoint to the inclusion of $\Rep$-stable equivariantly presentable global categories into $\PrL{\Glo}{\Orb}$. In particular we conclude that $\Sp_\bullet$ is the free $\Rep$-stable equivariantly presentable global category generated by a point.
	
	We note that the fact that $\Stab^{\Orb}(\CC)$ is again equivariantly presentable is not a formality. To emphasize this we observe that while $\Spc_{\bullet,\ast}$ is globally presentable, $\Sp_\bullet \simeq \Stab^{\Orb}(\Spc_{\bullet,\ast})$ is not. So the process of stabilizing pointwise can in general destroy the existence of certain parametrized colimits. This makes the $\Rep$-stabilizations of \emph{globally} presentable global categories much more complicated in general. We can nevertheless prove the following theorem.
	
	\begin{introthm}\label{intro_thm_D}
		The globalization of a $\Rep$-stable global $\infty$-category is again $\Rep$-stable. In particular $\Spbulgl$ is the free $\Rep$-stable globally presentable global category on a single generator. 
	\end{introthm}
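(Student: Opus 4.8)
The plan is to isolate the substantive claim, namely that $\Glob$ preserves $\Rep$-stability, and then deduce the ``in particular'' clause by formal nonsense.

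\emph{Reduction to a levelwise statement.} Let $\CC$ be a pointed equivariantly presentable global category which is $\Rep$-stable. By \Cref{intro_thm_A}, $\Glob(\CC)$ is globally presentable; I first note that it is again pointed, since $\CC$ being pointed means each $\CC_K$ has a zero object preserved by all restriction functors, so that the constant-at-zero section is a zero object of each partially lax limit $\Glob(\CC)_G=\laxlimdag_{(\Glo_{/G})^{\op}}\CC$. In particular the tensoring of $\Glob(\CC)$ by $\Spc_{\bullet,\ast}$ is defined, and what must be shown is that for every $G\in\Glo$ and every $G$-representation $V$ the functor $S^V\otimes-\colon\Glob(\CC)_G\to\Glob(\CC)_G$ is an equivalence.

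\emph{Why a levelwise identification of the tensoring suffices.} The key claim is that, under the equivalence $\Glob(\CC)_G\simeq\laxlimdag_{(\Glo_{/G})^{\op}}\CC$, the endofunctor $S^V\otimes-$ is computed objectwise: for an object $\phi\colon K\to G$ of $\Glo_{/G}$ the evaluation functor $\ev_\phi\colon\Glob(\CC)_G\to\CC_K$ satisfies $\ev_\phi(S^V\otimes x)\simeq S^{\phi^*V}\otimes\ev_\phi(x)$, naturally in $x$, where $\phi^*V$ is the restricted $K$-representation. Granting this, recall that objects of $\Glob(\CC)_G$ are sections of a cocartesian fibration over $(\Glo_{/G})^{\op}$ and that a morphism of such sections is an equivalence exactly when each of its components $\ev_\phi$ is. Since $\CC$ is $\Rep$-stable, each $S^{\phi^*V}\otimes-\colon\CC_K\to\CC_K$ is an equivalence with inverse $S^{-\phi^*V}\otimes-$, and these levelwise inverses are compatible with the structure maps of the partially lax limit: every structure map is built from a restriction functor $\CC(\psi)\colon\CC_K\to\CC_{K'}$ for some $\psi\colon K'\to K$ over $G$, and, as $\CC$ is pointed — equivalently a module over $\Spc_{\bullet,\ast}$ as a global category — these restriction functors intertwine the tensorings, $\CC(\psi)(A\otimes x)\simeq\psi^*A\otimes\CC(\psi)(x)$, which for $A=S^{\phi^*V}$ reads $\psi^*S^{\phi^*V}\simeq S^{(\phi\psi)^*V}$. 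Hence the $S^{-\phi^*V}\otimes-$ assemble into an endofunctor of $\laxlimdag_{(\Glo_{/G})^{\op}}\CC$ which, equivalences again being detected componentwise, is inverse to $S^V\otimes-$. Thus $\Glob(\CC)$ is $\Rep$-stable.

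\emph{The expected obstacle.} Establishing the objectwise description of the tensoring is the only real difficulty. I would prove it by combining the construction of the $\Spc_{\bullet,\ast}$-module structure on a pointed equivariantly presentable global category with the compatibility properties of partially lax limits collected in \Cref{sec:appendix_A}: since every pointed $G$-space is built from the orbits $(G/H)_+$ by pointed equivariant colimits and $(G/H)_+\otimes-$ is the parametrized copower $\mathrm{ind}_H^G\,\mathrm{res}^G_H$, it is enough to understand how the induction--restriction adjunctions and the equivariant colimits produced in the proof of \Cref{intro_thm_A} interact with the evaluation functors $\ev_\phi$. Alternatively, one can argue structurally: the inclusion $\PrL{\Glo}{\L}\hookrightarrow\PrL{\Glo}{\Orb}$ is symmetric monoidal, hence $\Glob$ is oplax symmetric monoidal, and since $\Spc_{\bullet,\ast}$ is globally presentable one deduces that $\Glob$ is compatible with the $\Spc_{\bullet,\ast}$-module structures, whence the levelwise formula follows from naturality of the tensoring applied to the component functors of the partially lax limit.

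\emph{The ``in particular''.} Given that $\Glob$ preserves $\Rep$-stability, it restricts to a functor $\PrLVst{\Glo}{\Orb}\to\PrLVst{\Glo}{\L}$ between the full subcategories of $\Rep$-stable objects (the latter being the full subcategory of $\PrL{\Glo}{\L}$ on the $\Rep$-stable global categories), and since the inclusion $\PrL{\Glo}{\L}\hookrightarrow\PrL{\Glo}{\Orb}$ trivially preserves $\Rep$-stability, the adjunction of \Cref{intro_thm_A} restricts to an adjunction between these full subcategories. As $\Sp_\bullet$ is the free $\Rep$-stable equivariantly presentable global category on a point — the value at a point of the left adjoint to the relevant forgetful functor — composing with $\Glob$ exhibits $\Glob(\Sp_\bullet)$ as the value at a point of the left adjoint to the corresponding forgetful functor out of $\PrLVst{\Glo}{\L}$, i.e.\ as the free $\Rep$-stable globally presentable global category on a single generator. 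Finally, $\Sp_\bullet$ is in particular $\Rep$-stable, so the first part of the theorem applies; $\Glob(\Sp_\bullet)$ is globally presentable by \Cref{intro_thm_A}, and it is $\Spbulgl$ by \Cref{intro_thm_B}. Hence $\Spbulgl$ is the free $\Rep$-stable globally presentable global category on a single generator.
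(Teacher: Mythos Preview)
Your proposal is correct and follows essentially the same approach as the paper. The paper's proof (contained in \Cref{thm:Stab_Glo_of_Free} and its corollary) also reduces to the claim that $\Glob$ preserves $\Rep$-stability, establishes this via the same pointwise formula $\{X_f\}_f \mapsto \{S^{f^*V}\otimes X_f\}_f$ for the tensoring on $\Glob(\CC)_G$, and then deduces the universal property of $\Spbulgl$ by the same chain of adjunctions; your treatment is in fact slightly more explicit than the paper's about why the pointwise inverses assemble and about how one would justify the objectwise tensoring formula.
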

	
	Such a universal property for global spectra was first suggested by David Gepner and Thomas Nikolaus \cite{OWF}. In the setting of \emph{global model categories}, a similar universal property was proved in \cite{LS23}.
	
	Finally, note that a partially lax limit of symmetric monoidal categories is canonically symmetric monoidal. Therefore $\Spbulgl$ is canonically a symmetric monoidal global category. We also prove a symmetric monoidal analogue of the previous theorem.
	
	\begin{introcor}\label{intro_thm_E}
		The global $\infty$-category $\Spbulgl$ of globally equivariant spectra is the initial $\Rep$-stable globally presentable symmetric monoidal global category.
	\end{introcor}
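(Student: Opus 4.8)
The plan is to upgrade \Cref{intro_thm_D} to the symmetric monoidal setting via the general fact that in any symmetric monoidal $\infty$-category the unit object, with its canonical commutative algebra structure, is the initial commutative algebra. Concretely, the $\infty$-category of $\Rep$-stable globally presentable symmetric monoidal global categories is $\CAlg(\PrLVstotimes{\Glo}{\L})$, the commutative algebras for the relative tensor product on $\PrLVst{\Glo}{\L}$; its initial object is therefore the monoidal unit of $\PrLVstotimes{\Glo}{\L}$. It thus suffices to show that $\Spbulgl$, equipped with the canonical symmetric monoidal structure coming from its partially lax limit presentation, is this monoidal unit.

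To this end, recall from \Cref{intro_thm_B} that $\Spbulgl\simeq\Glob(\Sp_\bullet)$, and from the discussion preceding \Cref{intro_thm_D} that $\Sp_\bullet$ is the free $\Rep$-stable equivariantly presentable global category on a point. Once the relative tensor product $\PrLVstotimes{\Glo}{\Orb}$ on $\PrLVst{\Glo}{\Orb}$ is in place — together with the fact, which I would establish at the same time, that its monoidal unit is exactly the free $\Rep$-stable equivariantly presentable global category on a point — this exhibits $\Sp_\bullet$ as the monoidal unit of $\PrLVstotimes{\Glo}{\Orb}$. It remains to check that $\Glob$ refines to a symmetric monoidal functor $\PrLVstotimes{\Glo}{\Orb}\to\PrLVstotimes{\Glo}{\L}$; a symmetric monoidal functor preserves monoidal units, so then $\Spbulgl\simeq\Glob(\Sp_\bullet)$ is the monoidal unit of $\PrLVstotimes{\Glo}{\L}$, and the symmetric monoidal structure it acquires this way is the canonical one because $\Glob(\Sp_\bullet)_G$ is by construction the partially lax limit of the symmetric monoidal diagram $\Sp_\bullet$ along $(\Glo_{/G})^{\op}$. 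Together with the first paragraph this gives the corollary; that the resulting functor out of $\Spbulgl$ on any target is strong symmetric monoidal, rather than merely lax, uses that $\Spbulgl$ is generated under parametrized colimits by the global sphere spectrum and that $\otimes$ is parametrized-cocontinuous in each variable.

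The main obstacle is the groundwork this requires. One must construct the relative tensor products — the Day--Lurie tensor over $\Glo$ — on the large $\infty$-categories $\PrL{\Glo}{\Orb}$, $\PrLVst{\Glo}{\Orb}$, $\PrL{\Glo}{\L}$ and $\PrLVst{\Glo}{\L}$ of cocomplete global categories; identify in each case the free global category on a point with the monoidal unit; and — the crux — promote $\Glob$, which a priori is only a functor of underlying $\infty$-categories defined through partially lax limits, to a symmetric monoidal functor. This last step rests on the compatibility of partially lax limits with the relative tensor product, and in particular on the fact (noted in the introduction) that a partially lax limit of symmetric monoidal categories is canonically symmetric monoidal — one of the results about partially lax limits assembled in \Cref{sec:appendix_A}. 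A secondary, and essentially formal, point is to verify that the several symmetric monoidal structures on $\Spbulgl$ appearing above — from the partially lax limit presentation, from the monoidal unit of $\PrLVstotimes{\Glo}{\L}$, and from applying the symmetric monoidal functor $\Glob$ to $\Sp_\bullet$ — all agree.
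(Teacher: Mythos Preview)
Your approach is genuinely different from the paper's, and the main step you flag as ``the crux''---promoting $\Glob$ to a symmetric monoidal functor $\PrLVst{\Glo}{\Orb}\to\PrLVst{\Glo}{\L}$ for the parametrized Lurie tensor products---is precisely what the paper avoids. There is no argument in the paper that $\Glob$ commutes with the Lurie tensor, and it is not clear that partially lax limits interact with the relative tensor product in the way you suggest; a left adjoint to a forgetful inclusion has no reason to be strong monoidal without further input. You also need that $\PrLVst{\Glo}{\L}$ is closed under the tensor product on $\PrL{\Glo}{\L}$ to even form $\CAlg(\PrLVst{\Glo}{\L})$, which the paper only states as an expectation (the smashing remark) and does not prove. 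So as written your outline has real gaps at the load-bearing points.

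The paper instead introduces an auxiliary category $\PrLotimes{\Glo}{\L}$ of functors $\bbF_{\gl}^{\op}\to\CAlg(\PrLun)$ satisfying base-change but \emph{not} necessarily the projection formula, and shows that $\CAlg(\PrL{\Glo}{\L})$ sits inside it as the full subcategory where the projection formula holds. In this larger category the entire machinery of the paper---the construction of $\Glob$ via partially lax limits, the adjunction with $\fgt$, and $\Rep$-stabilization---can be rerun verbatim with $\CAlg(\PrLun)$-valued diagrams, because partially lax limits of symmetric monoidal categories are computed on underlying categories. This yields that $\Spbulgl^\otimes$ admits a unique map to any $\CD\in\CAlg(\PrL{\Glo}{\L})_{\rep\text{-}\st}$. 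The remaining work is then to check that $\Spbulgl^\otimes$ actually lies in $\CAlg(\PrL{\Glo}{\L})$, i.e.\ satisfies the projection formula; this is done directly by reducing to suspension spectra via $\Sigma^\infty_+$ from $\Spc_{\bullet\text{-}\gl,\ast}$ (where the projection formula holds since this is the unit) and then closing under colimits and desuspensions. The upshot is that the paper trades your heavy symmetric monoidal infrastructure for a single concrete verification of the projection formula on $\Spbulgl$.
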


	\subsection*{Organization}
	
	In \Cref{sec:S-pres} we introduce the notion of $S$-presentable $T$-categories, for a pair of an orbital category $T$ and an orbital subcategory $S$ of $T$. In \Cref{sec:appendix_A} we collect facts about partially lax limits which are used in \Cref{sec:Glob} to deduce \Cref{intro_thm_A}. In \Cref{sec:Glob} we explain how to construct the free cocompletion with relations $\Free(\CC)$ of a $S$-presentable $T$-category $\CC$ using partially lax limits, leading to a proof of \Cref{intro_thm_A}. Specializing to the pair $(\Glo,\Orb)$ we obtain the globalization functor $\Glob(-)$. We then recall in \Cref{sec:P-stable} the relevant background on parametrized semiadditivity and stability, and show that in certain cases $\Free(-)$ preserves this property. We then apply this to the global context to conclude \Cref{intro_thm_B} and \Cref{intro_thm_C}. Finally in \Cref{sec:Rep-stable} we introduce the notion of $\Rep$-stable equivariantly presentable global categories and explain how to construct $\Rep$-stabilizations via inverting the action of representation spheres. We then consider the interaction of globalization with $\Rep$-stabilization to conclude \Cref{intro_thm_D} and \Cref{intro_thm_E}.
	
	\subsection*{Acknowledgements}
	We would like to thank Bastiaan Cnossen and Tobias Lenz for the fruitful collaboration which was crucial for the proof of Theorem B. We also thank them for reading an earlier draft of this article. Finally we would like to thank Miguel Barrero, Bastiaan Cnossen, David Gepner, Fabian Hebestreit, Marin Jannsen, Tobias Lenz, Denis Nardin, Thomas Nikolaus, Luca Pol, Stefan Schwede and Sebastian Wolf for helpful discussions. The author is an associate member of the Hausdorff Center for Mathematics at the University of Bonn, supported by the DFG Schwerpunktprogramm 1786 ``Homotopy Theory and Algebraic Geometry'' (project ID SCHW 860/1-1).
	
	\section{Partial presentability}\label{sec:S-pres}
	
	In this section we recall some basic definitions from parametrized category theory, and introduce the notion of partial presentability. This section is rather terse, and the reader may benefit from consulting \cite{CLL_Partial}*{Section 2}.
	
	\begin{notation}
		Given a category $T$ we define $\bbF_T$, the \emph{finite coproduct completion} of $T$, as the smallest full subcategory of $\PSh(T)$ which is closed under coproducts and contains the image of the Yoneda embedding.
	\end{notation}
	
	\begin{definition}
		We define $\Cat_T$, the category of \emph{$T$-categories}, to equal $\Fun^\times(\bbF_T^{\op},\Cat)$, the $\infty$-category of finite product preserving functors from $\bbF_T^{\op}$ to $\Cat$.
	\end{definition}
	
	\begin{remark}
		The objects of $\Cat_T$ are referred to as $T$-categories, and morphisms in $\Cat_T$ are called $T$-functors. Note that restriction along the inclusion $T^{\op}\subset \bbF_T^{\op}$ induces an equivalence
		\[\Cat_T \xrightarrow{\sim} \Fun(T^{\op},\Cat).\]
	\end{remark}
	
	\begin{notation}
		Given a $T$-category $\CC$ we will typically denote $\CC(X)$ by $\CC_X$ and $\CC(f)$ by $f^*$. If $f^*$ has a left or right adjoint, we will denote it by $f_!$ and $f_*$ respectively. We will write the component of a $T$-functor $F\colon \CC \rightarrow \CD$ at $X$ by $F_X\colon \CC_X\rightarrow \CD_X$.
	\end{notation}
	
	\begin{remark}
		$\Cat_T$ admits an enhancement to a $T$-parametrized category $\ul{\Cat}_T$ via the assignment $\ul{\Cat}_T(X) \coloneqq \Cat_{T_{/X}}$.
	\end{remark}
	
	We fix a wide subcategory inclusion $S\subset T$. Note that the inclusion $S \subset T$ induces a functor  $\bbF_S\rightarrow \bbF_T$, which exhibits $\bbF_S$ as a wide subcategory of $\bbF_T$.
	
	\begin{definition}
		We say $S\subset T$	is an \textit{orbital subcategory} if the pullback of a morphism in $\bbF_S$ along any morphism in $\bbF_T$ exists in $\bbF_T$ and is again in $\bbF_S$. We say $T$ is \textit{orbital} if it is an orbital subcategory of itself. We say $(T,S)$ is an \textit{orbital pair} if $T$ is orbital and $S$ is an orbital subcategory of $T$.
	\end{definition}
	
	\begin{example}
		We define $\Glo$ to be the $(2,1)$-category of finite connected groupoids $\bB G$ and $\Orb$ the subcategory spanned by the faithful functors. We claim that $(\Glo,\Orb)$ is an orbital pair. Observe that $\bbF_\Glo$ is equivalent to the $(2,1)$-category of finite groupoids, which admits all homotopy pullbacks. The subcategory $\bbF_\Orb$ is the wide subcategory on the faithful maps of groupoids, and thus the orbitality of $\Orb$ is equivalent to the observation that pullbacks of faithful maps of groupoids are again faithful.
	\end{example}
	
	\begin{example}
		The orbit category $\Orb_G$ of a finite group $G$ is orbital.
	\end{example}
	
	\begin{example}
		Suppose $(T,S)$ is an orbital pair. Then $(T_{/X}, \pi_X^{-1}(S))$ is again an orbital pair, where $\pi_X^{-1}(S)$ is the preimage of $S$ under the functor $\pi_X\colon T_{/X}\rightarrow T$.
	\end{example}
	
	Before we state the definition of $S$-presentability we recall the following categorical notion:
	\begin{definition}
		Consider a commutative square
		\[\begin{tikzcd}
			\CC & {\CC'} \\
			\CD & {\CD'}
			\arrow["{F'}"', from=1-1, to=2-1]
			\arrow["F", from=1-2, to=2-2]
			\arrow["{G'}", from=2-1, to=2-2]
			\arrow["G", from=1-1, to=1-2]
		\end{tikzcd}\] in $\Cat$ such that both $F$ and $F'$ are right adjoints, with left adjoints $L$ and $L'$ respectively. We say such a square is \textit{left adjointable} if the Beck-Chevalley transformation
		\[
		L'G' \xRightarrow{\;\eta\;\;} L'G'FL \xRightarrow{\;\sim\;\;} L'F'GL \xRightarrow{\;\epsilon\;\;} GL
		\]
		is an equivalence. If $F$ and $F'$ are instead left adjoints then we can dually define the notion of right adjointability.
	\end{definition}
	
	We now introduce the notion of $S$-presentability for $T$-categories. 
	
	\begin{definition}\label{def:S-presentable}
		Let $(T,S)$ be an orbital pair. We say a $T$-category $\CC$ is \emph{$S$-presentable} if 
		\begin{enumerate}
			\item $\CC$ is fiberwise presentable, i.e.~lifts to a functor $\CC\colon \bbF_T^{\op}\rightarrow \PrLun$;
			\item $p^*$ has a left adjoint for all $p\in \bbF_S$;
			\item For every pullback square
			\[\begin{tikzcd}
				{X'} & {X} \\
				{Y'} & {Y}
				\arrow["g'", from=1-1, to=1-2]
				\arrow["p'"', from=1-1, to=2-1]
				\arrow["g", from=2-1, to=2-2]
				\arrow["p", from=1-2, to=2-2]
				\arrow["\lrcorner"{anchor=center, pos=0.125}, draw=none, from=1-1, to=2-2]
			\end{tikzcd}\]
			in $\bbF_T$ such that $p$ (and therefore $p'$) is in $\bbF_S$ the square 
			\[\begin{tikzcd}
				{\CC_{Y}} & {\CC_{Y'}} \\
				{\CC_{X}} & {\CC_{X'}}
				\arrow["{g^*}", from=1-1, to=1-2]
				\arrow["{(p)^*}"', from=1-1, to=2-1]
				\arrow["{(g')^*}", from=2-1, to=2-2]
				\arrow["{(p')^*}", from=1-2, to=2-2]
			\end{tikzcd}\]
			is left adjointable. We may refer to this condition by saying that $\CC$ satisfies base-change for morphisms in $\bbF_S$.
		\end{enumerate}
		We say a functor $F\colon \CC\rightarrow \CD$ between $S$-presentable categories is \emph{$S$-cocontinuous} if for all $X\in \mathbb{F}_T$ the functor $F_X$ admits a right adjoint and the square 
		\[\begin{tikzcd}
			{\CC_Y} & {\CD_Y} \\
			{\CC_X} & {\CD_X}
			\arrow["{F_X}", from=2-1, to=2-2]
			\arrow["{F_Y}", from=1-1, to=1-2]
			\arrow["{p^*}"', from=1-1, to=2-1]
			\arrow["{p^*}", from=1-2, to=2-2]
		\end{tikzcd}\]
		is left adjointable for all $p\colon X\rightarrow Y$ in $\mathbb{F}_S$. 
		
		We define the category of $S$-presentable $T$-categories $\PrL{T}{S}$ as the subcategory of $\Cat_T$ spanned by the $S$-presentable $T$-categories and $S$-cocontinuous functors. One can show that the assignment $\uPrL{T}{S}(X) \simeq \PrL{T_{/X}}{\pi_X^{-1}(S)}$  is a parametrized subcategory of $\ul{\Cat}_T$.
	\end{definition}
	
	\begin{remark}
		The notion of $S$-presentability has been previously introduced by \cite{CLL_Partial} in the generality of a cleft category $S\subset T$ \cite{CLL_Partial}*{Definition 3.2}. In certain ways cleft categories are less general than an orbital pair, but in others ways they are much more general. For example in a cleft category we only require that pullbacks of maps in $S$ along maps in $T$ land in the image of $\PSh(S)$ in $\PSh(T)$, instead of in $\bbF_S$. We expect that the results of this section are true in a generality which encompasses cleft categories, but have been unable to show this so far.
		
		We note that for a $S\subset T$ which is both a cleft category and an orbital pair, a $T$-category $\CC$ is $S$-presentable in our sense if and only if it is $S$-presentable in the sense of \cite{CLL_Partial}*{Definition 4.3}.
	\end{remark}
	
	\begin{remark}\label{rem:PrL_the_same}
		$\PrL{T}{T}$ is equivalent to the category $\PrL{T}{\L}$ of presentable categories internal to the presheaf topos $\PSh(T)$ in the sense of \cite{MW22}, see Theorem A of \textit{loc.~cit.} Therefore we will denote $\PrL{T}{T}$ by $\PrL{T}{\L}$. By the parametrized adjoint functor theorem \cite{MW22}*{Proposition 6.3.1} a $T$-cocontinuous functor between $T$-presentable categories is equivalent to a parametrzied left adjoint, which we may define as an adjunction in the $2$-category $\Fun(T^{\op},\Cat)$.
	\end{remark}
	
	\begin{remark}\label{rem:internal_char_of_colims}
		Let $f\colon X\rightarrow Y$ be a map in $\bbF_T$, and consider the adjunction \[f_!\colon {\bbF_{T}}_{/X}\rightleftarrows {\bbF_T}_{/Y}\cocolon f^*.\] Note that both functors are coproduct preserving, and so induce an adjunction 
		\[
		f^*\colon \Cat_{T_{/Y}} \rightleftarrows \Cat_{T_{/X}}\cocolon f_*,
		\] 
		where $f^*$ and $f_*$ are given by precomposing by $f_!$ and $f^*$ respectively. By \cite{CLL23}*{Lemma 2.3.14} Conditions (2) and (3) of the previous definition together are equivalent to the claim that for all $p\colon X\rightarrow Y$ in $\bbF_S$, the unit functor 
		\[
		\pi_Y^*\CC \xrightarrow{\ul{p}^*} p_*p^*\pi_Y^*\CC
		\]
		of $T_{/Y}$-categories admits a parametrized left adjoint, which we will denote by $\ul{p}_!$.
	\end{remark}
	
	\begin{example}
		Applying the previous definitions to the orbital pair $(\Glo,\Orb)$ we recover the notion of equivariant presentability from the introduction, first defined in \cite{CLL_Partial}. Applied to $(\Glo,\Glo)$ we obtain the notion of global presentability.
	\end{example}
	
	\begin{example}\label{ex:S-spaces}
		We define the $T$-category $\Spc_\bullet^T \coloneqq \PSh(T)_{/\bullet}$ of \emph{$T$-spaces}, where $\Spc_\bullet^T$ is functorial in pullback. We define $\Spc^S_\bullet$ as the full $T$-subcategory of $\Spc_\bullet^T$ which at $X\in T$ is given by the smallest full category closed under colimits and containing those maps $Z\rightarrow X$ which are in $S$. Because $S$ is orbital this forms a parametrized subcategory. By \cite{MW21}*{Remark 7.3.4} this is the free $S$-presentable $T$-category on a point: for every $S$-presentable $T$-$\infty$-category there exists an equivalence
		\[
		\Hom_{\PrL{T}{S}}(\Spc_\bullet^S,\CC) \simeq \core(\Gamma\CC),
		\] where $\Gamma\CC \coloneqq \lim_{T^{\op}}\CC$ and $\core(\Gamma\CC)$ is the subcategory of $\Gamma\CC$ spanned by the equivalences. Applied to $S=T$ we find that $\Spc_\bullet^T$ is the free $T$-presentable $T$-category on a point.
	\end{example}
	
	\begin{example}\label{ex:eq_spaces}
		In the special case of $(T,S) = (\Glo,\Orb)$, $\Spc^\Orb_\bullet$ is equivalent to the global category $\Spc_\bullet$, which sends $\CB G$ to the category of $G$-spaces. See \cite{CLL_Partial}*{Theorem 5.3} for a proof of this fact. Similarly $\Spc^\Glo_\bullet$ is equivalent to the category of globally equivariant spaces, which sends $\CB G$ to the category of $G$-global spaces in the sense of \cite{LenzGglobal}, see \cite{CLL23}*{Theorem 3.2.2}.
	\end{example}
	
	Our key motivation for introducing the notion of $S$-presentability is to have a convenient formalism to engage with the following example.
	
	\begin{example}\label{ex:eq_spectra}
		We define the global category of (genuine) equivariant spectra $\Sp_\bullet$ by sending $\CB G$ to the category of \textit{$G$-spectra}. Formally one may define it as the initial functor $\CC\colon \Glo^{\op}\rightarrow \CAlg(\PrL{}{\mathrm{L}})$ under the functor
		\[
		\Spc_{\bullet,\ast}\colon \Glo^{\op}\rightarrow \CAlg(\PrL{}{\mathrm{L}}), \quad \CB G \mapsto \Spc_{G,\ast},
		\] such that the representation spheres are pointwise invertible. Such a functor exists by results of \cite{Ro15}; we refer the reader to \Cref{sec:Rep-stable} for more details.
		
		We can compare this to more classical definitions. Namely, because the representation spheres are invertible in the category of genuine $G$-spectra, we obtain, by the universal property of $\CC$, a comparison natural transformation from $\CC$ to the global category of equivariant spectra $\Sp_\bullet$, given by applying Dwyer--Kan localization pointwise to the diagram of relative categories sending $\CB G$ to orthogonal $G$-spectra together with the stable equivalences. The latter is the definition of the global category of equivariant spectra given in \cite{CLL_Partial}*{Section 9.1}. The resulting natural transformation is pointwise an equivalence by the results of \cite{gepner2020equivariant}*{Appendix C}, and so we conclude that our definition agrees with the usual definition of genuine equivariant spectra.
		
		To connect to the discussion of partial presentability, we observe that $\Sp_\bullet$ is equivariantly presentable. While this is nothing more then a collection of classical statements about equivariant spectra which are surely well-known to experts, it is also a special case of \Cref{thm:Stab_P}. However $\Sp_\bullet$ is \textit{not} globally presentable: the restriction functor $q^*\colon \Sp_{G}\rightarrow \Sp_{H}$ does not have a left adjoint whenever $q\colon H\rightarrow G$ is a non-injective group homomorphism. The existence of such a left adjoint is obstructed by the tom Dieck splitting, which implies that $q_*$ does not preserve compact objects when $q$ is not injective. By \cite{BDS15}*{Theorem 3.3} this implies that $q^*$ cannot preserve all limits.
	\end{example}
	
	\section{Partially lax limits}\label{sec:appendix_A}
	
	One of the main goals of this article is to give a construction of the relative cocompletion of $S$-presentable $T$-categories using partially lax limits. In this section we will recall the definition of partially lax limits of categories, originally due to \cite{Berman} in the higher categorical context. Then we will collect a variety of facts about them which we require to provide a formula for relative cocompletion. For example in \Cref{subsec:lim_in_part} we consider the question of when partially lax limits admit limits and colimits, and how they are computed. Then in \Cref{subsec:adj_of_part} we give two methods for constructing adjunctions between partially lax limits.
	
	\begin{definition}\label{def:marked}
		A marked category $(\CI,\CW)$ consists of a category $\CI$ equipped with a replete subcategory $\CW$. We write $\Cat^\dagger$ for the category of marked categories and marking preserving functors.
	\end{definition}
	
	\begin{definition}
		Let $(\CI,\CW)$ be a marked category and let $F\colon \CI\rightarrow \Cat$ be a functor. Then we view the cocartesian unstraightening $\Unco{F}$ canonically as a marked category by marking all of the cocartesian morphisms which live over morphisms in $\CW$. 
	\end{definition}
	
	\begin{definition}
		Given two marked categories $\CC$ and $\CD$ we write $\Fun^\dagger(\CC,\CD)$ for the full subcategory of $\Fun(\CC,\CD)$ spanned by those functors which preserve marked morphisms. Suppose $\CC$ and $\CD$ both admit a functor $F$ and $G$ respectively to a category $\CI$. Then we define $\Fun_{\CI}(\CC,\CD)$ to be the pullback 
		\[\begin{tikzcd}
			{\Fun_{\CI}^\dagger(\CC,\CD)} & {\Fun^\dagger(\CC,\CD)} \\
			{\{F\}} & {\Fun(\CC,\CI)}
			\arrow["{G_*}", from=1-2, to=2-2]
			\arrow[""{name=0, anchor=center, inner sep=0}, from=2-1, to=2-2]
			\arrow[from=1-1, to=2-1]
			\arrow[from=1-1, to=1-2]
			\arrow["\lrcorner"{anchor=center, pos=0.125}, draw=none, from=1-1, to=0]
		\end{tikzcd}\]
	\end{definition}
	\begin{definition}\label{def:partially_lax_limits}
		Given a marked category $(\CI,\CW)$ and a functor $F\colon \CI\rightarrow \Cat$, we define the \textit{partially lax limit of $F$ with respect to $\CW$} \[\laxlimdag_{(\CI,\CW)} F\coloneqq \Fun_{\CI}^{\dagger}(\CI,\Unco{F})\] as the category of sections of the cocartesian unstraightening $\Unco{F}\to \CI$ of $F$ which preserve marked edges, i.e.~send morphisms in $\CW$ to cocartesian edges of $\Unco{F}$.
	\end{definition}
	
	We will sometimes drop the reference to the marking on $\CI$ when it is either implicit or clear from context.
	
	\begin{remark}\label{rem:sections_unwind}
		Consider a section $s\colon \CI\rightarrow \Unco{F}$. Note that for every $i\in \CI$, $s(i)$ lives in the fiber of $\Unco{F}$ over $i$ and so may view $X_i \coloneqq s(i)$ as an object of $F(i)$. Next we may consider the map $s(\alpha)\colon X_i\rightarrow X_{i'}$ associated to a morphism $\alpha\colon i \to i'$ in $\CI$. Once again because $s$ is a section, $s(\alpha)$ lives over $\alpha$. Factoring $s(\alpha)$ into a cocartesian edge followed by a map living in the fiber over $i'$ gives a morphism $s_\alpha\colon F(\alpha)X_i\rightarrow X_{i'}$. Note that $s$ is an object of the partially lax limit with respect to $\CW$ if and only if $s_\alpha$ is an equivalence for all edges $\alpha\in \CW$. The remaining data contained in the section $s$ encodes compatibility and coherence data for the collection of morphisms $s_\alpha$.
	\end{remark}
	
	First we state two simple results, which together will imply that the relative cocompletion preserves $T$-categories.
	
	\begin{proposition}\label{prop:lim_of_lim_ext}
		Consider a diagram $F\colon \CI\rightarrow \Cat$ and write $\CI^\Pi$ for the finite product completion of $\CI$ and $\tilde{F}\colon \CI^\Pi\rightarrow \Cat$ for the extension of $F$ to $\CI^\Pi$. Then the canonical map
		\[
		\laxlimdag_{\CW^\Pi\subset \CI^\Pi} \tilde{F} \rightarrow \laxlimdag_{\CW \subset \CI} F
		\] 
		is an equivalence.
	\end{proposition}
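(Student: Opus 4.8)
The plan is to identify the canonical map with restriction along the inclusion $j\colon(\CI,\CW)\hookrightarrow(\CI^\Pi,\CW^\Pi)$ and to construct an inverse by right Kan extension along $j$. First I would record the basic compatibilities: since $\tilde F$ is the finite-product-preserving extension of $F$ we have $\tilde F\circ j\simeq F$, hence $j^*\Unco{\tilde F}\simeq\Unco F$ as marked cocartesian fibrations over $\CI$, so that restricting sections along $j$ produces a functor $\laxlimdag_{(\CI^\Pi,\CW^\Pi)}\tilde F\to\laxlimdag_{(\CI,\CW)}F$, well-defined on markings because $j(\CW)\subseteq\CW^\Pi$. Recall also that $j$ is fully faithful and that $\CI^\Pi$ is the free finite-product completion, so that $\mathrm{Ran}_j j\simeq\id_{\CI^\Pi}$ and, more generally, a functor out of $\CI^\Pi$ is canonically right Kan extended from $\CI$ precisely when it preserves finite products.

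The technical heart is a coordinatewise description of the Grothendieck construction. Because $\CI^\Pi$ has finite products and $\tilde F$ preserves them, $\Unco{\tilde F}$ has finite products computed coordinatewise: under $\tilde F(i_1,\dots,i_n)\simeq F(i_1)\times\cdots\times F(i_n)$ the product of $x\in F(i)$ and $y\in F(i')$ is $(x,y)\in\tilde F(i,i')$ with structure maps the cocartesian lifts of the two projections of $\CI^\Pi$; the projection $p\colon\Unco{\tilde F}\to\CI^\Pi$ preserves finite products; and a morphism of $\Unco{\tilde F}$ lying over a projection of $\CI^\Pi$ is cocartesian exactly when it is an equivalence onto the relevant coordinate. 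Since every object of $\CI^\Pi$ is the finite product of the one-element tuples $(i_l)$ in its image from $\CI$, this yields the key observation: a section $s\colon\CI^\Pi\to\Unco{\tilde F}$ sends every projection $(i_1,\dots,i_n)\to(i_l)$ to a cocartesian edge if and only if $s$ preserves finite products (the structure maps then being the images of the projections). As $\CW^\Pi$ contains all projections, every $\CW^\Pi$-marked section is automatically finite-product-preserving.

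With this in hand I would construct the inverse. Given a $\CW$-marked section $t$ of $\Unco F$, viewed along $j^*\Unco{\tilde F}\simeq\Unco F$ as a functor $\CI\to\Unco{\tilde F}$ over $j$, the fully-faithfulness of $j$ makes the comma category controlling $\mathrm{Ran}_j t$ at $(i_1,\dots,i_n)$ equal to $\coprod_l\CI_{i_l/}$, a disjoint union of categories with initial objects; hence the pointwise Kan extension exists in $\Unco{\tilde F}$ (only finite products are needed) and satisfies $\mathrm{Ran}_j t(i_1,\dots,i_n)\simeq\prod_l t(i_l)$. It is a section: $p$ preserves finite products, so it commutes with this Kan extension, and $p\circ\mathrm{Ran}_j t\simeq\mathrm{Ran}_j(p\circ t)\simeq\mathrm{Ran}_j j\simeq\id_{\CI^\Pi}$. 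It preserves the marking: being right Kan extended from $\CI$ into a free finite-product completion it is finite-product-preserving, hence by the previous paragraph sends every reindexing morphism — every composite of projections, diagonals and permutations — to a cocartesian edge, while it sends a coordinatewise product of morphisms $\alpha_k\in\CW$ of $\CI$ to the coordinatewise product of the cocartesian edges $t(\alpha_k)$, which is again cocartesian since cocartesianness over a coordinatewise morphism is detected coordinatewise. Since every morphism of $\CW^\Pi$ factors as a reindexing morphism followed by a coordinatewise product of $\CW$-morphisms, $\mathrm{Ran}_j t$ is $\CW^\Pi$-marked. (Here I only use that $\CW^\Pi$ contains the projections and is contained in the class of morphisms all of whose components lie in $\CW$.)

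Finally I would check that $j^*$ and $\mathrm{Ran}_j$ are mutually inverse on these subcategories: $j^*\mathrm{Ran}_j t\simeq t$ since $j$ is fully faithful, and conversely any $\CW^\Pi$-marked section $s$ is finite-product-preserving by the second paragraph, hence canonically the right Kan extension of $j^*s$ along $j$, so $\mathrm{Ran}_j j^*s\simeq s$; as the relevant subcategories are full and the unit and counit of $j^*\dashv\mathrm{Ran}_j$ restrict to equivalences there, this gives an equivalence $\laxlimdag_{(\CI^\Pi,\CW^\Pi)}\tilde F\simeq\laxlimdag_{(\CI,\CW)}F$ inverse to restriction. I expect the main obstacle to be the second paragraph: the coordinatewise description of finite products and of cocartesian edges in $\Unco{\tilde F}$ and its compatibility with $p$, with $j$, and with the two markings, since this is the one step that touches the internal structure of the cocartesian unstraightening rather than following formally from the universal property of the free finite-product completion and the calculus of Kan extensions.
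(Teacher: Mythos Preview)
Your proposal is correct and follows essentially the same approach as the paper: the paper's one-line proof states that \emph{relative} right Kan extension (in the sense of \cite{HTT}*{Definition 4.3.2.2}) provides an inverse, and your argument is a detailed unpacking of exactly this. The only cosmetic difference is that you compute the ordinary right Kan extension in the total space $\Unco{\tilde F}$ (using that it has finite products computed coordinatewise) and then verify separately that the result is a section, whereas invoking the relative Kan extension formalism packages the ``is a section'' property into the construction; the underlying computation is identical.
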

	
	\begin{proof}
		A simple computation shows that relative right Kan extension, in the sense of \cite{HTT}*{Definition 4.3.2.2}, provides an inverse.
	\end{proof}
	
	Suppose $\CJ_{\bullet}\colon \CI\rightarrow \Cat^\dagger$ is a diagram in marked categories, and that $\CI$ is itself marked. Then we will canonically consider the unstraightening $\Unco{\CJ_\bullet}$ as a marked category by marking both the cocartesian edges in $\Unco{\CJ_\bullet}$ which lie over marked edges of $\CI$, as well as the marked edges in each fiber. Recall that there exists a functor $\Unco{\CJ_\bullet}\rightarrow \colim \CJ_\bullet$ which exhibits the target as a localization of the source at the cocartesian edges.
	
	\begin{proposition}\label{prop:lim_on_colim_diagram}
		Consider a diagram $\CJ_{-}\colon \CI\rightarrow \Cat^\dagger$ in marked categories and a cocone $\{F_i\colon\CJ_i\rightarrow \Cat\}_{i\in \CI}$, which induces a functor $F\colon \Unco{\CJ_\bullet}\rightarrow \colim \CJ_i\rightarrow \Cat$. Then there exists an equivalence
		\[
		\laxlimdag_{\Unco{\CJ_\bullet}} F \simeq \laxlimdag_\CI \laxlimdag_{\CJ_i} F_i
		\] is an equivalence. 
	\end{proposition}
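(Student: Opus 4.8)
The plan is to prove the equivalence by identifying both sides with the category of marked sections of a common cocartesian fibration over $\Unco{\CJ_\bullet}$, using the compatibility of unstraightening with composition. The key formal input is that taking the cocartesian unstraightening of the composite $\Unco{\CJ_\bullet} \xrightarrow{F} \Cat$ should be computed in two stages matching the two-stage structure of $\Unco{\CJ_\bullet}$ itself.

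First I would recall the iterated-unstraightening picture. A functor $F\colon \Unco{\CJ_\bullet}\rightarrow \Cat$ is the same datum as a functor $\CI\rightarrow \Cat$ classifying the cocartesian fibration whose fiber over $i$ is $\Fun(\CJ_i,\Cat)$ — more precisely, by the universal property of the unstraightening $\Unco{\CJ_\bullet}\rightarrow \CI$ and the fact that $\Unco{\CJ_\bullet}\rightarrow \colim\CJ_i$ is a localization, the functor $F$ restricts on each fiber to $F_i\colon \CJ_i\rightarrow \Cat$, and the total space $\Unco{F}\rightarrow \Unco{\CJ_\bullet}$ can be built by first forming the relative unstraightenings $\Unco{F_i}\rightarrow \CJ_i$ fiberwise and then assembling these over $\CI$. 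Concretely there is an equivalence of cocartesian fibrations over $\CI$
\[
\Unco{\CJ_\bullet}\times_{\CI}\Unco{F}\simeq \Unco{\left(i\mapsto \Unco{F_i}\right)},
\]
and the projection $\Unco{F}\rightarrow \Unco{\CJ_\bullet}$ corresponds under this to the evident map. I would extract this from the straightening/unstraightening equivalence being an equivalence of $\infty$-categories over $\Cat$, compatible with base change; it is the same mechanism as \cite{HTT}*{Chapter 3} relative straightening, or one can cite the ``unstraightening commutes with pullback'' principle.

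Next I would translate the section categories. A marked section of $\Unco{F}\rightarrow \Unco{\CJ_\bullet}$ is, by the description above, the datum of: for each $i\in\CI$, a section of $\Unco{F_i}\rightarrow \CJ_i$ preserving the fiberwise marked edges — that is, an object of $\laxlimdag_{\CJ_i} F_i$ — together with, for each morphism $i\rightarrow i'$ in $\CI$, coherence data relating these sections along the pushforward $\CJ_i\rightarrow \CJ_{i'}$, with the constraint that this coherence datum is cocartesian whenever $i\rightarrow i'$ is marked in $\CI$. Unwinding, this says exactly that the assignment $i\mapsto \laxlimdag_{\CJ_i}F_i$ organizes into a functor $\CI\rightarrow\Cat$ (using that partially lax limits are contravariantly functorial, here covariantly via the pushforward direction built into $\Unco{\CJ_\bullet}$), and that marked sections of $\Unco{F}$ biject with marked sections of the unstraightening of $i\mapsto\laxlimdag_{\CJ_i}F_i$ — which is by definition $\laxlimdag_\CI\laxlimdag_{\CJ_i}F_i$. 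The marking bookkeeping is where one must be careful: on $\Unco{\CJ_\bullet}$ we marked both the $\CI$-cocartesian edges over marked edges and the fiberwise marked edges, and I would check that under the iterated-unstraightening identification these two classes correspond respectively to the ``outer'' marking (from $\CW\subset\CI$) and the ``inner'' marking (forcing each section to land in $\laxlimdag_{\CJ_i}F_i$ rather than the full section category).

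**The main obstacle** I anticipate is making the compatibility of unstraightening with the two-step fibration structure precise enough to track markings, rather than just the underlying categories; this is essentially a Fubini theorem for (partially lax) limits, and while morally clear, a clean citation for the marked/partially-lax version may not exist, so I expect the real work is in verifying that the bijection on sections respects the cocartesian-edge conditions on both levels simultaneously. One honest route is to reduce to the unmarked Fubini statement $\laxlim_{\Unco{\CJ_\bullet}}F\simeq\laxlim_\CI\laxlim_{\CJ_i}F_i$ (lax limits of sections) and then cut out the partially lax limits on both sides as full subcategories defined by the same local conditions — the marked edges of $F_i$ being sent to cocartesian edges, and the edges of $\CW$ being sent to cocartesian edges — observing these conditions match termwise under the established equivalence. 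I would also remark, as the statement's final sentence does somewhat redundantly, that once the functor is exhibited it is automatically an equivalence, so the substance is entirely in the section-category identification.
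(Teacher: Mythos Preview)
Your proposal is correct and follows essentially the same route as the paper. The paper's proof is a three-line chain of equivalences whose only nontrivial step is precisely the ``Fubini for sections'' you identify: it cites \cite{LNP}*{Proposition 4.15} to decompose $\Fun_{\Unco{\CJ_\bullet}}^{\dagger}(\Unco{\CJ_\bullet},\Unco{F})$ as $\laxlimdag_{\CI}\Fun_{\Unco{\CJ_\bullet}}^{\dagger}(\CJ_i,\Unco{F})$, then identifies the fiberwise term with $\Fun_{\CJ_i}^{\dagger}(\CJ_i,\Unco{F_i})$ by pullback of the unstraightening --- exactly the two steps you describe, with the marking bookkeeping absorbed into the cited result rather than spelled out.
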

	
	\begin{proof}
		This follows from the following chain of equivalences:
		\begin{align*}
			\laxlimdag_{\Unco{\CJ_\bullet}} F &\coloneqq \Fun_{\Unco{\CJ_\bullet}}^{\CW-\textup{cocart}}(\Unco{\CJ_\bullet}, \Unco{F}) \\
			&\simeq \laxlimdag_\CI \Fun_{\Unco{\CJ_\bullet}}^{\CW_i-\textup{cocart}}(\CJ_i, \Unco{F}) \\
			&\simeq \laxlimdag_\CI\Fun_{\CJ_i}^{\CW_i-\textup{cocart}}(\CJ_i,\Unco{F_i})
			\eqqcolon  \laxlimdag_\CI \laxlimdag_{\CJ_i} F_i,
		\end{align*}
		where the first equivalence is justified by \cite{LNP}*{Proposition 4.15}.
	\end{proof}
	
	\begin{remark}\label{rem:lim_on_colim_diagram}
	Suppose that a functor $G\colon \CI\rightarrow \Cat$ sends a collection of edges $\CV$ to equivalences, and that $\CI$ is marked by a collection of edges $\CW$. Writing $\tilde{G}\colon \CI[\CV^{-1}]\rightarrow \Cat$ for the functor induced by $G$, we obtain an equivalence
	\[
	\laxlimdag_{\CI} G \coloneqq \Fun_{\CI}^{\CW-\textup{cocart}}(\CI,\Unco{G}) \simeq \Fun_{\CI[\CV^{-1}]}^{\CW'-\textup{cocart}}(\CI[\CV^{-1}],\Unco{\tilde{G}}) \eqqcolon \laxlimdag_{\CI[\CV^{-1}]} \tilde{G},
	\] 
	where $\CW'$ is the image of $\CW$ in $\CI[\CV^{-1}]$. The functor $F$ from the previous proposition is by definition of this form, and so in the left-hand side of the equivalence stated by the we may pass to the localization of $\Unco{F}$ at the cocartesian edges which lie over marked edges in $\CI$. In the extreme case that every edge of $\CI$ is marked, we obtain an equivalence
	\[
	\laxlimdag_{\colim \CJ_i} F \rightarrow \lim_\CI \laxlimdag_{\CJ_i} F_i.
	\]
	\end{remark}
	
	\subsection{Limits and colimits in partially lax limits}\label{subsec:lim_in_part}
	In this subsection we give two propositions which respectively provide sufficient conditions for the existence of limits and colimits in partially lax limits. We begin by considering the case of fully lax limits.
	
	\begin{proposition}\label{prop:laxlim_limits}
		Let $F\colon \CI\rightarrow \Cat$ be a functor such that each category $F(i)$ admits limits of shape $\CJ$ for all $i\in \CI$. Then $\laxlim F$ admits limits of shape $\CJ$, and a section $s\colon \CI\rightarrow \Unco{F}$ living over a $\CJ$-shaped diagram $\{s_j\}_{j\in\CJ}$ is a limit if and only if $s(i) \simeq \lim_\CJ s_j(i)$ for all $i\in \CI$.
	\end{proposition}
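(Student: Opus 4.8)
The plan is to identify $\laxlim F$ with an $\infty$-category of sections and then run the whole argument through the theory of relative limits, in the spirit of the proof of \Cref{def:partially_lax_limits}'s companion \Cref{prop:lim_of_lim_ext}. First I would recall that, with the trivial marking, $\laxlim F = \Fun_\CI(\CI, \Unco{F})$ is simply the $\infty$-category of all sections of the cocartesian fibration $q\colon \Unco{F}\to\CI$ classified by $F$ (the dagger condition is vacuous here, since any functor, in particular any section, already sends equivalences to equivalences and hence to cocartesian edges over equivalences). A $\CJ$-indexed diagram in $\laxlim F$ is then, by adjunction, a functor $\sigma\colon \CI\times\CJ \to \Unco{F}$ whose composite to $\CI$ is the projection, and a cone on it is an extension along $\CI\times\CJ \hookrightarrow \CI\times\CJ^\triangleleft$ with the same property. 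So the claim becomes: $\sigma$ admits a relative right Kan extension along $\CI\times\CJ \hookrightarrow \CI\times\CJ^\triangleleft$ relative to $q$; this extension restricts over each $i\in\CI$ to a limit cone $\{i\}\times\CJ^\triangleleft\to F(i)$; and a cone with this fiberwise-limit property is exactly a limit cone in $\Fun_\CI(\CI,\Unco{F})$.

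The key steps are as follows. First, since $q$ is a cocartesian fibration and each fiber $q^{-1}(i)\simeq F(i)$ admits $\CJ$-limits, the fiberwise criterion for relative limits (\cite{HTT}*{\S4.3.1--4.3.2}, dualized to the cocartesian setting) shows that the relative right Kan extension of $\sigma$ along $\CI\times\CJ\hookrightarrow\CI\times\CJ^\triangleleft$ over $q$ exists and is computed pointwise: its value at the cone point over $i$ is $\lim_\CJ \sigma(i,-)$, taken in $F(i)$, and its value at $(i,j)$ is $\sigma(i,j)$. This produces the candidate section $s$ with $s(i)\simeq \lim_\CJ s_j(i)$, and in particular shows that the section structure of $s$ over an edge $\alpha\colon i\to i'$ is assembled, coherently, from the canonical comparison $F(\alpha)(\lim_\CJ s_j(i))\to \lim_\CJ F(\alpha)(s_j(i))\to \lim_\CJ s_j(i')$ — note that no hypothesis on the transition functors $F(\alpha)$ is needed, precisely because this comparison always exists for limits. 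Second, I would invoke the description of limits in section categories: the right adjoint to restriction $\Fun_\CI(\CI\times\CJ^\triangleleft,\Unco{F})\to \Fun_\CI(\CI\times\CJ,\Unco{F})$ is given by relative right Kan extension over $q$, so a cone in $\Fun_\CI(\CI,\Unco{F})$ is a limit cone if and only if its adjunct is such a relative Kan extension, i.e. if and only if it is a fiberwise limit. Combining the two steps yields both existence of $\CJ$-limits in $\laxlim F$ and the stated pointwise formula. (Alternatively, one could cite the analogous statement for fully lax limits from \cite{LNP}*{Section 4}.)

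The main obstacle is the bookkeeping around relative Kan extensions: one must check both that relative right Kan extensions along $\CI\times\CJ\hookrightarrow\CI\times\CJ^\triangleleft$ over $q$ exist under the fiberwise-completeness hypothesis, and that they are detected pointwise, and then match ``limit internal to the section category'' with ``$q$-relative right Kan extension''. The one genuinely delicate point is variance: $q$ is a \emph{cocartesian} fibration whereas we want \emph{limits}, so one must use the version of the relative-limit existence and detection theorems appropriate to cocartesian fibrations rather than quoting the cartesian-fibration statements verbatim — concretely, one checks that a fiberwise $\CJ$-limit cone in $q^{-1}(i)$ is automatically a $q$-limit cone. Once this is in place the proof is essentially formal, and the same scheme will adapt to the partially lax case, where one additionally verifies that the fiberwise-limit section preserves the marked (cocartesian) edges.
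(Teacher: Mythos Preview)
Your proposal is correct and takes essentially the same approach as the paper: the paper's proof consists of the single observation that $\laxlim_\CI F = \Fun_\CI(\CI,\Unco{F})$ and then cites the dual of \cite{HTT}*{Proposition 5.1.2.2}, which is precisely the relative-limit/section-category statement you are unpacking by hand. Your discussion of the variance issue (cocartesian fibration with limits) is exactly the content of that duality, so nothing is different except that you spell out what the citation contains.
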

	
	\begin{proof}
		Recall that $\laxlim_{\CI} F \coloneqq \Fun_\CI(\CI,\Unco{F})$. Therefore this is the dual of \cite{HTT}*{Proposition 5.1.2.2}.
	\end{proof}
	
	Recall that we write $\Cat^{L}$ for the wide subcategory of $\Cat$ spanned by the left adjoint functors.
	
	\begin{proposition}
		Let $F\colon \CI\rightarrow \Cat^L$ be a functor such that each category $F(i)$ admits colimits of shape $\CJ$ for all $i\in\CI$. Then $\laxlim F$ admits colimits of shape $\CJ$ and a section $s\colon \CI\rightarrow \Unco{F}$ living under a $\CJ$-shaped diagram $\{s_j\}_{j\in\CJ}$ is a colimit if and only if $s(i) \simeq \colim s_j(i)$ for all $i\in \CI$.
	\end{proposition}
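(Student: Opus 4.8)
The plan is to adapt the proof of \Cref{prop:laxlim_limits}, but the colimit case is genuinely not dual to it, and the hypothesis that $F$ lands in $\Cat^L$ is essential rather than cosmetic. Recall that $\laxlim_\CI F=\Fun_\CI(\CI,\Unco{F})$ is the category of sections of the coCartesian fibration $p\colon\Unco{F}\to\CI$, and that the coCartesian pushforward along $\alpha\colon i\to i'$ is the transition functor $F(\alpha)$. For a candidate pointwise colimit $s(i)\coloneqq\colim_{j\in\CJ}s_j(i)$ one must produce structure maps $s_\alpha\colon F(\alpha)s(i)\to s(i')$, and here the limit and colimit cases diverge. For limits the canonical comparison $F(\alpha)(\lim_j s_j(i))\to\lim_j F(\alpha)s_j(i)$ always exists and points the right way, so \Cref{prop:laxlim_limits} needs no hypothesis on $F$; for colimits the canonical map $\colim_j F(\alpha)s_j(i)\to F(\alpha)(\colim_j s_j(i))$ points the wrong way, and to build $s_\alpha$ out of $\colim_j (s_j)_\alpha$ one must invert it. This inversion is legitimate precisely because $F(\alpha)$, being a left adjoint, preserves $\CJ$-colimits; without this the pointwise section does not even assemble, and the honest colimit in $\laxlim F$ is genuinely not computed pointwise.

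Granting the preservation equivalences $F(\alpha)(\colim_j s_j(i))\simeq\colim_j F(\alpha)s_j(i)$, I would organize the construction through the theory of relative colimits for $p\colon\Unco{F}\to\CI$, following \cite{HTT}*{Section 4.3.1}. A $\CJ$-indexed diagram of sections is a functor $\CJ\times\CI\to\Unco{F}$ over $\CI$, and its colimit in the section category is computed as a $p$-relative left Kan extension in the sense of \cite{HTT}*{Section 4.3.2}. The content of the $\Cat^L$ hypothesis is that this relative colimit is computed fiberwise: because every transition functor preserves $\CJ$-colimits, the comparison maps assembling the fibers are equivalences, so the relative left Kan extension restricts at each $i\in\CI$ to the ordinary colimit $\colim_j s_j(i)$ in $F(i)$, and the preservation equivalences supply exactly the structure maps $s_\alpha$ together with all their higher coherences. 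This simultaneously establishes existence of the colimit and the asserted pointwise formula.

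Finally, the characterization of colimits follows from the joint conservativity of the evaluation functors $\ev_i\colon\laxlim_\CI F\to F(i)$: a map of sections is an equivalence if and only if it is a fiberwise equivalence, so once pointwise colimits are shown to be colimits, the essential uniqueness of colimits forces every colimit to have the pointwise form. I expect the main obstacle to be the coherence in the second step, namely verifying that the fiberwise colimits, glued by the preservation equivalences, assemble into a genuine relative colimit of the section diagram rather than merely a levelwise collection of colimits; this is exactly the point at which the $\Cat^L$ hypothesis is indispensable, and it is instructive that the analogous statement fails for general $F\colon\CI\to\Cat$.
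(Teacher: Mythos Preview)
Your approach is workable but takes a genuinely different route from the paper. The paper's proof is a two-liner: since $F$ lands in $\Cat^L$, every transition functor $F(\alpha)$ has a right adjoint, so by \cite{HTT}*{Corollary 5.2.2.5} the cocartesian fibration $\Unco{F}\to\CI$ is simultaneously a \emph{cartesian} fibration; one then invokes \cite{HTT}*{Proposition 5.1.2.2} directly (not its dual) to conclude that colimits in $\Fun_\CI(\CI,\Unco{F})$ exist and are computed pointwise. In other words, the $\Cat^L$ hypothesis is cashed in not as ``transition functors preserve colimits'' but as ``there is a second, cartesian, fibration structure,'' and once that is in hand the colimit case becomes literally the statement of 5.1.2.2 rather than a variant requiring new coherence work.

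Your argument instead keeps the cocartesian picture and builds the pointwise colimit section by hand via relative left Kan extension, using colimit-preservation of the $F(\alpha)$ to invert the comparison maps. This can be made precise (the relevant input is the existence criterion for $p$-colimits in a cocartesian fibration whose pushforwards preserve the colimits in question, in the spirit of \cite{HTT}*{4.3.1}), but as you yourself flag, the coherence step---that the fiberwise colimits glued along the preservation equivalences really assemble to a $p$-relative colimit---is where the work lies. The paper's bifibration trick dissolves exactly that obstacle: the cartesian structure packages the inverse comparison maps and all their coherences for free, so no assembly argument is needed. Your route is more hands-on and arguably more illuminating about \emph{why} the $\Cat^L$ hypothesis matters, while the paper's is shorter and avoids the point you identified as the main difficulty.
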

	
	\begin{proof}
		Because $F(f)$ is a left adjoint for all $f\colon i\rightarrow i'$ in $\CI$, $\Unco{F}$ is also a cartesian fibration by \cite{HTT}*{Corollary 5.2.2.5}. Therefore the result follows from \cite{HTT}*{Proposition 5.1.2.2}.
	\end{proof}
	
	Suppose $\CI$ is a marked category. We will now give a criteria for the inclusion $\laxlimdag F\subset \laxlim F$ to preserve limits and colimits. We begin with some preparation.
	
	\begin{notation}
		Consider a cocartesian fibration $\Unco{F}\rightarrow \CI$. Given a morphism $f\colon x_i\rightarrow x_j$ in $X$ which lives over the morphism $\alpha\colon i\rightarrow j$ in $\CI$, we write $f_\alpha\colon F(\alpha) x_i \rightarrow x_j$ for the morphism obtained by factoring $f$ into a cocartesian followed by a fiberwise edge.
	\end{notation}
	
	\begin{lemma}\label{lem:pushforward_composite}
		Let $\Unco{F}\rightarrow \CI$ be a cocartesian fibration. Consider a pair of composable morphisms 
		\[x_i\xrightarrow{f} x_j\xrightarrow{g} x_k\] in $X$ which lives over the morphisms 
		\[i\xrightarrow{\alpha} j\xrightarrow{\beta} k\] in $\CI$. Then the induced map $(gf)_{\beta_\alpha}\colon F(\beta \alpha) x_i \to x_k$ is equal to the composite 
		\[\begin{tikzcd}
			{F(\beta)F(\alpha)x_i} & {F(\beta)x_j} & {x_k}
			\arrow["{g_\beta}", from=1-2, to=1-3]
			\arrow["{F(\beta)(f_\alpha)}", from=1-1, to=1-2]
		\end{tikzcd}
		\]
	\end{lemma}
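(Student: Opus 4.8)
The plan is to unwind both sides of the claimed equality directly from the definition of the pushforward $(-)_{(-)}$ as ``cocartesian edge followed by fiberwise edge,'' and then invoke the two-out-of-three / uniqueness properties of cocartesian morphisms. First I would recall that for a cocartesian fibration $\Unco{F}\to\CI$ and a morphism $\gamma\colon i\to k$ in $\CI$, picking a cocartesian lift of $\gamma$ starting at $x_i$ determines the object $F(\gamma)x_i$, and that any morphism $h\colon x_i\to x_k$ over $\gamma$ factors essentially uniquely as a cocartesian edge $x_i\to F(\gamma)x_i$ followed by a fiberwise edge $h_\gamma\colon F(\gamma)x_i\to x_k$. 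In particular $(gf)_{\beta\alpha}$ is characterized (up to contractible choice) by the property that its composite with the chosen cocartesian lift of $\beta\alpha$ at $x_i$ recovers $gf$.

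Next I would build the composite on the right-hand side explicitly. Choose a cocartesian lift $c_\alpha\colon x_i\to F(\alpha)x_i$ of $\alpha$, so that $f = f_\alpha\circ c_\alpha$ with $f_\alpha$ fiberwise over $j$. Choose a cocartesian lift $c_\beta\colon x_j\to F(\beta)x_j$ of $\beta$ at $x_j$; applying the functoriality of pushforward (or equivalently, using that a cocartesian lift of $\beta$ at $F(\alpha)x_i$ is obtained by transporting $c_\beta$ along $f_\alpha$, since $f_\alpha$ is an equivalence's worth of coherence away from being in the fiber) I get a cocartesian lift $c'_\beta\colon F(\alpha)x_i\to F(\beta)F(\alpha)x_i$ of $\beta$, together with the fiberwise comparison map $F(\beta)(f_\alpha)\colon F(\beta)F(\alpha)x_i\to F(\beta)x_j$ fitting into a square with $c_\beta$, $c'_\beta$, $f_\alpha$ and $F(\beta)(f_\alpha)$. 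Composing $c'_\beta$ after $c_\alpha$ gives a morphism $x_i\to F(\beta)F(\alpha)x_i$ lying over $\beta\alpha$ which is cocartesian, because a composite of cocartesian morphisms is cocartesian; hence it is (equivalent to) the chosen cocartesian lift of $\beta\alpha$, and therefore exhibits $F(\beta)F(\alpha)x_i$ as $F(\beta\alpha)x_i$.

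Now I would simply verify that the proposed composite $g_\beta\circ F(\beta)(f_\alpha)$ satisfies the defining property of $(gf)_{\beta\alpha}$: precomposing it with the cocartesian lift $c'_\beta\circ c_\alpha$ of $\beta\alpha$ gives
\[
g_\beta\circ F(\beta)(f_\alpha)\circ c'_\beta\circ c_\alpha \simeq g_\beta\circ c_\beta\circ f_\alpha\circ c_\alpha \simeq g\circ f,
\]
using the commuting square for $F(\beta)(f_\alpha)$ in the first step and the factorizations $g = g_\beta\circ c_\beta$, $f = f_\alpha\circ c_\alpha$ in the second. Since $g_\beta\circ F(\beta)(f_\alpha)$ is fiberwise over $k$ (both factors are) and its composite with the cocartesian lift of $\beta\alpha$ is $gf$, the uniqueness of such factorizations forces $(gf)_{\beta\alpha}\simeq g_\beta\circ F(\beta)(f_\alpha)$, which is the assertion. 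The only genuinely delicate point — and the step I expect to be the main obstacle — is pinning down the coherence square relating $c_\beta$, $c'_\beta$ and $F(\beta)(f_\alpha)$ at the level of $\infty$-categories rather than merely homotopy categories; this is essentially the statement that pushforward along $\beta$ is functorial in the source object, which can be extracted from the straightening equivalence or from the universal property of cocartesian edges applied to the square, but requires a little care to set up without circularity. Everything else is a formal diagram chase with cocartesian lifts.
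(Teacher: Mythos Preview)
Your proposal is correct and follows essentially the same approach as the paper: the paper's proof consists of displaying precisely the commutative diagram you construct in words (cocartesian lifts $x_i\to F(\alpha)x_i\to F(\beta)F(\alpha)x_i$, the square relating $c_\beta$, $c'_\beta$, $f_\alpha$, $F(\beta)(f_\alpha)$, and the factorizations of $f$ and $g$), and then reading off the result from the uniqueness of the cocartesian-then-fiberwise factorization. Your worry about the coherence square is overcautious: that square is exactly how $F(\beta)(f_\alpha)$ is \emph{defined}, namely as the fiberwise component of the factorization of $c_\beta\circ f_\alpha$ through the cocartesian lift $c'_\beta$, so no circularity or extra input from straightening is needed.
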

	
	\begin{proof}
		This follows immediately from the commutative diagram
		\[
		\begin{tikzcd}[column sep = large]
			{x_i} & {F(\alpha)x_i} & {F(\beta)F(\alpha)x_i} \\
			& {x_j} & {F(\beta)x_j} \\
			& {} & {x_k.} \\
			\arrow["{g_\beta}", from=2-3, to=3-3]
			\arrow["{F(\beta)(f_\alpha)}", from=1-3, to=2-3]
			\arrow["f"', from=1-1, to=2-2]
			\arrow["g"', from=2-2, to=3-3]
			\arrow[tail, from=2-2, to=2-3]
			\arrow[tail, from=1-1, to=1-2]
			\arrow["{f_\alpha}", from=1-2, to=2-2]
			\arrow[tail, from=1-2, to=1-3]
		\end{tikzcd}
		\]
		in $\Unco{F}$ which lives over the triangle $i\to j\to k$ in $\CI$ as suggested by the notation, and whose tailed morphisms are cocartesian. 
	\end{proof}
	
	\begin{remark}\label{rem:lim_of_cocart_edges}
		We would like to understand the structure maps in a limit of sections. To this end we suppose $X$ is a cocartesian fibration over $[1]$ classifying a functor $F\colon \CC\rightarrow \CD$. Then we observe that the inclusion $\CD \hookrightarrow X$ given by including the fiber over $\{1\}$ into $X$ preserves limits: given an object $C\in \CC$,
		\begin{align}\label{eq:lim_in_cocart}
			\Hom_{X}(C,\lim D_j) \simeq \Hom_{\CD}(F(C),\lim D_j) \simeq \lim \Hom_{\CD}(F(C),D_j) \simeq \lim \Hom_{X}(C,D_j). 
		\end{align}
		Now suppose that $\CC$ and $\CD$ both admit $\CJ$-shaped limits, and consider a $\CJ$-shaped diagram $\{s_j\colon [1]\rightarrow X\}_{j\in \CJ}$ of sections. By \Cref{prop:laxlim_limits}, the limit of this diagram exists in $\Fun_{[1]}([1],X)$, and is given by $\lim s_j(0)\rightarrow \lim s_j(1)$. Since $\lim s_j(1)$ is a limit in $X$, the map $\lim s_j(0)\rightarrow \lim s_j(1)$ is induced by the cone
		\[
		\lim s_j(0) \rightarrow s_j(0) \rightarrow s_j(1).
		\]	
		Factoring this through a cocartesian edge $\lim s_j(0)\rightarrow F (\lim s_j(0))$ over $0\rightarrow 1$, we obtain a cone $F (\lim s_j(0)) \rightarrow s_j(1)$, which induces a map $F(\lim s_j(0))\rightarrow \lim s_j(1)$. Furthermore the equivalence constructed in \eqref{eq:lim_in_cocart} shows that the composite of these two maps is equivalent to the map $\lim s_j(0)\rightarrow \lim s_j(1)$. Now applying \Cref{lem:pushforward_composite} we find that the cone $F (\lim s_j(0))\rightarrow s_j(1)$ is equivalent to the composite
		\[
		F(\lim s_j(0)) \rightarrow F(s_j(0)) \rightarrow s_j(1).
		\]
		In particular the map $F(\lim s_j(0))\rightarrow \lim s_j(1)$ factors as a composite 
		\[
		F(\lim s_j(0)) \rightarrow \lim F (s_j(0)) \rightarrow \lim s_j(1),
		\] where the first map is the canonical limit comparison map, and the second map is the limit of the maps $F(s_j(0))\rightarrow s_j(1)$ induced by the maps $s_j(0)\rightarrow s_j(1)$. In particular suppose each of the maps $s_j(0)\rightarrow s_j(1)$ was cocartesian. Then this second map is an equivalence, and so we conclude that the map $\lim s_j(0)\rightarrow \lim s_j(1)$ is cocartesian if and only if $F$ preserves $\CJ$-limits.
		
		We note that the dual analysis applies to colimits in cartesian fibrations over $[1]$.
	\end{remark}
	
	We can now give a sufficient condition for the inclusion of the partially lax limit into the lax limit to preserve limits.
	
	\begin{proposition}\label{prop:lim_in_part_laxlim}
		Consider a marked category $(\CI,\CW)$, and a diagram $F\colon \CI\rightarrow \Cat$. Suppose that the value of $F$ on every $i$ admits limits of shape $\CJ$ and that for every $\alpha\in \CW$ the functor $F(\alpha)$ preserves limits of shape $\CJ$. Then $\laxlimdag F$ admits limits of shape $\CJ$, and they are preserved by the inclusion $\laxlimdag F\subset \laxlim F$.
	\end{proposition}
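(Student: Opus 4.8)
The plan is to reduce everything to the fiberwise computation of limits in the fully lax limit. First I would invoke \Cref{prop:laxlim_limits}: since each $F(i)$ admits $\CJ$-shaped limits, $\laxlim F$ admits $\CJ$-shaped limits, and the limit of a $\CJ$-diagram $\{s_k\}_{k\in\CJ}$ of sections is the section $s$ characterized by $s(i)\simeq\lim_k s_k(i)$ for every $i\in\CI$. Since $\laxlimdag F$ is by definition the full subcategory of $\laxlim F$ on the sections carrying edges of $\CW$ to cocartesian edges, it then suffices to check that whenever all the $s_k$ lie in $\laxlimdag F$, so does their limit $s$; being a limit cone in $\laxlim F$ whose vertex lies in the full subcategory $\laxlimdag F$, the cone is then automatically a limit cone there, and the inclusion preserves it.

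The heart of the matter is thus to fix an edge $\alpha\colon i\to j$ in $\CW$ and show that the structure map $s_\alpha\colon F(\alpha)\,s(i)\to s(j)$ of \Cref{rem:sections_unwind} is an equivalence. I would restrict the cocartesian fibration $\Unco F\to\CI$ along $\alpha\colon[1]\to\CI$, obtaining a cocartesian fibration over $[1]$ classifying $F(\alpha)\colon F(i)\to F(j)$; by the fiberwise formula of \Cref{prop:laxlim_limits}, applied both over $\CI$ and over $[1]$, the restriction of $s$ along $\alpha$ is the limit of the restricted diagram $\{s_k|_{[1]}\}_{k\in\CJ}$, and each restricted section has cocartesian structure map because $s_k\in\laxlimdag F$ and $\alpha\in\CW$. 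The analysis of \Cref{rem:lim_of_cocart_edges} then applies, expressing $s_\alpha$ as the composite
\[
F(\alpha)\Bigl(\lim_{k} s_k(i)\Bigr)\longrightarrow \lim_{k} F(\alpha)\bigl(s_k(i)\bigr)\longrightarrow \lim_{k} s_k(j),
\]
in which the first arrow is the canonical limit-comparison map for $F(\alpha)$ and the second is the limit of the structure maps $(s_k)_\alpha$. By hypothesis $F(\alpha)$ preserves $\CJ$-shaped limits, so the first arrow is an equivalence; and each $(s_k)_\alpha$ is an equivalence since $s_k\in\laxlimdag F$, hence so is their limit. Therefore $s_\alpha$ is an equivalence and $s\in\laxlimdag F$, which is what remained to be shown.

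I do not expect a serious obstacle. The points that merit care are that the fiberwise limit formula of \Cref{prop:laxlim_limits} is stable under restricting the base along $\alpha\colon[1]\to\CI$, and that \Cref{rem:lim_of_cocart_edges} --- phrased for a single cocartesian fibration over $[1]$ --- applies to this restricted fibration, which ultimately rests on the compatibility of pushforwards with composition from \Cref{lem:pushforward_composite}. Once $s$ is known to lie in the full subcategory $\laxlimdag F\subset\laxlim F$, the passage from a limit in the lax limit to a limit in the partially lax limit that is preserved by the inclusion is purely formal.
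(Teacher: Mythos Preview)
Your proposal is correct and follows essentially the same approach as the paper: reduce to the fully lax limit via \Cref{prop:laxlim_limits}, then for each marked edge $\alpha$ restrict along $\alpha\colon[1]\to\CI$ and invoke the analysis of \Cref{rem:lim_of_cocart_edges}. The paper's proof is simply a terse version of what you have written, leaving implicit the factorization of $s_\alpha$ and the two-step verification that you spell out.
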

	
	\begin{proof}
		Consider a $\CJ$-shaped diagram $\{s_j\colon \CI\rightarrow \Unco{F}\}$ in $\laxlimdag F$. We have to show that the limit of this diagram in $\laxlim F$ is again in $\laxlimdag F$. I.e. given an edge $\alpha\colon i\rightarrow i'$, we have to show that the map $\lim s_j(i)\rightarrow s_j(i')$ is cocartesian. However this can be checked by first pulling back along $\alpha\colon[1]\rightarrow \CI$, where the analysis of \Cref{rem:lim_of_cocart_edges} gives the conclusion.
	\end{proof}
	
	\begin{remark}
		Under the assumptions of the previous proposition we have shown, using the informal description of objects in a lax limit from \Cref{rem:sections_unwind}, that 
		\[
		\lim \{X_i,s_\alpha\} = \{\lim X_i, \lim s_\alpha\circ \phi\},
		\] 
		in $\laxlim F$, where $\phi\colon  F(\alpha) \lim X_i\rightarrow \lim F(\alpha) (X_i)$ is the canonical limit comparison map.
	\end{remark}
	
	Similarly we can provide sufficient conditions for the inclusion of the partially lax limit to preserve colimits. 
	
	\begin{proposition}\label{prop:colim_in_part_laxlim}
		Consider a marked category $(\CI,\CW)$, and a diagram $F\colon \CI\rightarrow \Cat^L$. Suppose that the value of $F$ on every $i
		\in \CI$ admits colimits of shape $\CJ$. Then $\laxlimdag F$ admits colimits of shape $\CJ$, and they are preserved by the inclusion $\laxlimdag F\subset \laxlim F$.
	\end{proposition}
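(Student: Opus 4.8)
The plan is to dualize the proof of \Cref{prop:lim_in_part_laxlim}, replacing the limit analysis of \Cref{rem:lim_of_cocart_edges} by its colimit counterpart (the one flagged at the end of that remark). First, since $F$ factors through $\Cat^L$, the colimit analogue of \Cref{prop:laxlim_limits} established above guarantees that $\laxlim F$ admits $\CJ$-shaped colimits and that they are computed fiberwise: a section $s\colon\CI\to\Unco{F}$ lying under a $\CJ$-shaped diagram $\{s_j\}_{j\in\CJ}$ is a colimit exactly when $s(i)\simeq\colim_j s_j(i)$ for all $i\in\CI$. So, starting from such a diagram $\{s_j\}$ with every $s_j$ in $\laxlimdag F$, it is enough to show its colimit $s$ in $\laxlim F$ again preserves marked edges; that is, that for each $\alpha\colon i\to i'$ in $\CW$ the structure map $s_\alpha\colon F(\alpha)s(i)\to s(i')$ is an equivalence.

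Exactly as in \Cref{prop:lim_in_part_laxlim}, I would check this after pulling back along $\alpha\colon[1]\to\CI$, reducing to $\CI=[1]$ with $\Unco{F}$ the cocartesian fibration classifying the left adjoint $F(\alpha)$. The extra input over the limit case is that $\Unco{F}$ is now \emph{also} a cartesian fibration over $[1]$ (by \cite{HTT}*{Corollary 5.2.2.5}, since $F(\alpha)$ is a left adjoint), so that, dually to \Cref{rem:lim_of_cocart_edges}, the inclusion of the fiber over $\{0\}$ into $\Unco{F}$ is a left adjoint and hence preserves $\CJ$-shaped colimits. Thus $s(0)\simeq\colim_j s_j(0)$ is a colimit in $\Unco{F}$, its cocartesian pushforward is $F(\alpha)s(0)\simeq\colim_j F(\alpha)s_j(0)$ (using that $F(\alpha)$, being a left adjoint, commutes with $\CJ$-colimits), and running the colimit dual of the diagram chase in \Cref{rem:lim_of_cocart_edges}, with \Cref{lem:pushforward_composite} used to identify the relevant cocones, exhibits $s_\alpha$ as the colimit $\colim_j (s_j)_\alpha$ of the structure maps of the $s_j$. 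Since each $s_j$ lies in $\laxlimdag F$ and $\alpha\in\CW$, each $(s_j)_\alpha$ is an equivalence, whence so is $s_\alpha$, and $s$ lies in $\laxlimdag F$ as desired.

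I expect the only real work — and hence the main obstacle — to be faithfully dualizing the diagram chase of \Cref{rem:lim_of_cocart_edges}: keeping track of which fiber inclusion is the left adjoint and verifying that the structure map of the colimit section is \emph{genuinely} the colimit of the $(s_j)_\alpha$, rather than merely receiving a comparison map from it. Once that is in place, the reason no hypothesis on the marked edges appears (in contrast with \Cref{prop:lim_in_part_laxlim}) is simply that left adjoints preserve all colimits, so that both the bicartesianness of $\Unco{F}|_{[1]}$ over $[1]$ and the compatibility of $F(\alpha)$ with $\CJ$-shaped colimits are automatic.
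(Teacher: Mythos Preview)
Your proposal is correct and follows essentially the same route as the paper: both use that $\Unco{F}$ is bicartesian (since $F$ lands in $\Cat^L$), apply the colimit dual of \Cref{rem:lim_of_cocart_edges} to analyze the structure map of the colimit section over a marked edge, and conclude it is an equivalence because each $(s_j)_\alpha$ is. The paper phrases the dual analysis slightly differently, first factoring $\colim s_j(i)\to\colim s_j(i')$ through a \emph{cartesian} edge via the associated diagram $G\colon\CI^{\op}\to\Cat^R$ of right adjoints and then passing to the adjoint map to obtain the cocartesian factorization, but this is exactly the computation you sketch; the resulting identification $s_\alpha\simeq\bigl(\colim_j (s_j)_\alpha\bigr)\circ\phi^{-1}$ (with $\phi$ the colimit comparison for $F(\alpha)$) is the same.
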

	
	\begin{proof}
		Write $G\colon \CI^{\op}\rightarrow \Cat^R$ for the diagram of right adjoint associated to $F$. Applying the dual analysis of \Cref{rem:lim_of_cocart_edges}, we find that given a $\CJ$-shaped diagram $\{s_j\colon \CI\rightarrow \Unco{F}\}$ in $\laxlimdag F$ and a map $\alpha\colon i\rightarrow i'$ in $\CW$, the induced map $\colim s_j(i)\rightarrow \colim s_j(i')$ factors as the map
		\[ 
		\phi\colon \colim s_j(i)\rightarrow \colim G(\alpha)(s_j(i')) \rightarrow G(\alpha)(s_j(i'))
		\] followed by a cartesian edge in $\Unco{F}$ over $\alpha$. The map $F(\alpha)\colim s_j(i)\rightarrow \colim s_j(i')$ given by instead factoring $\colim s_j(i)\rightarrow \colim s_j(i')$ through a cocartesian edge is adjoint to $\phi$. In particular we compute that it is given by the composite
		\[
		F(\alpha)(\colim s_j(i))\xrightarrow{\sim} \colim F(\alpha) (s_j(i)) \rightarrow \colim s_j(i').
		\]
		Because the original maps $s_j(i)\rightarrow s_j(i')$ were cocartesian, this is an equivalence. We conclude that $\colim s_j(i)\rightarrow \colim s_j(i')$ is again cocartesian.
	\end{proof}
	
	\begin{remark}
		Under the assumptions of the previous proposition we have shown, using the informal description of objects in a lax limit, that 
		\[
		\colim \{X_i,s_\alpha\} = \{\colim X_i, \colim s_\alpha \circ \phi^{-1}\},
		\] 
		where $\phi\colon \colim F(\alpha) X_i\rightarrow F(\alpha) \colim X_i$ is the canonical colimit comparison map. 
	\end{remark}

	\subsection{Adjunctions of partially lax limits}\label{subsec:adj_of_part}
	Let $(\CI,\CW)$ be a relative category, and consider two functors $F,G\colon \CI\rightarrow \Cat$. Suppose that one has a commutative diagram 
	\[
	\begin{tikzcd}
		\Unco{F} \arrow[rr,"H"] \arrow[rd,"p"']&		&	\Unco{G} \arrow[ld, "q"] \\
		&	\CI	&
	\end{tikzcd}
	\]
	such that $H$ preserves cocartesian edges which lie over an edge in $\CW$. We call such a commutative diagram a \emph{partially lax transformation} from $F$ to $G$.
	
	\begin{remark}
		Note that if $H$ in fact preserves all cocartesian edges then it corresponds via straightening to an honest natural transformation. By weakening this condition we obtain laxly commuting naturality squares, explaining the terminology. For further justification see \cite{HHLN1}.
	\end{remark}
	
	Observe that $H$ induces a functor
	\[
	\laxlimdag_{(\CI,\CW)}F \coloneqq \Fun^{\CW\text{-co}}_\CI(\CI,\Unco{F}) \xrightarrow{H_*} \Fun^{\CW\text{-co}}_\CJ(\CI,\Unco{G}) \eqqcolon \laxlimdag_{(\CI,\CW)} G.
	\] 
	To summarize, partially lax limits are functorial in partially lax natural transformations. Furthermore we note that partially lax limits are also clearly functorial in natural transformations $H\Rightarrow H'$ of partially lax natural transformations $\Unco{F}\rightarrow \Unco{G}$ which lie over the identity of $\CI$. We will now give a sufficient condition for the functor $H^*$ constructed above to admit a right adjoint, for which we first recall the following result.
	
	\begin{lemma}\label{lem:adj_on_unstr}
		Suppose $F,G\colon \CI\rightarrow \Cat$ are two diagram, and consider a natural transformation $\eta\colon F\Rightarrow G$ which is pointwise a left adjoint. Then the functor $H\colon \Unco{F}\rightarrow \Unco{G}$ encoding $\eta$ is a left adjoint. The associated right adjoint $J\colon \Unco{G}\rightarrow \Unco{F}$ is again a functor over $\CI$ and is given on the fiber over $i\in \CI$ by the right adjoint of $\eta_i$. Furthermore the unit and counit of the adjunction $H\dashv J$ live over the identity natural transformation on $\id_\CI$. Finally $J$ preserves cocartesian edges over $f\colon i\rightarrow j$ if and only if the commutative square 
		\[
		\begin{tikzcd}
			{F(i)} & {F(j)} \\
			{G(i)} & {G(j)}
			\arrow["{\eta_i}"', from=1-1, to=2-1]
			\arrow["{F(f)}", from=1-1, to=1-2]
			\arrow["{G(f)}", from=2-1, to=2-2]
			\arrow["{\eta_j}", from=1-2, to=2-2]
		\end{tikzcd}
		\]
		is left adjointable.
	\end{lemma}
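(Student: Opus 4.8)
The statement is a concrete instance of the general fact that a fibrewise left adjoint between cocartesian fibrations induces a left adjoint on total categories; my plan is to prove it by hand using the pointwise criterion for the existence of adjoints, reading off the fibrewise description and the shape of the unit/counit from the same computation. Recall that for the cocartesian unstraightening $\Unco{G}\to\CI$ and objects $u\in G(i)$, $v\in G(j)$ regarded in the fibres, there is a natural equivalence $\Map_{\Unco{G}}(u,v)\simeq\coprod_{\alpha\colon i\to j}\Map_{G(j)}\big(G(\alpha)u,v\big)$ over $\Map_{\CI}(i,j)$, and under this description the functor $H$ encoding $\eta$ sends $(i,x)$ to $(i,\eta_ix)$ and acts on the component over $\alpha$ by applying $\eta_j$ together with the naturality equivalence $\eta_jF(\alpha)\simeq G(\alpha)\eta_i$. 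Fixing $(j,y)\in\Unco{G}$ I would compute, for varying $(i,x)\in\Unco{F}$,
\[
\Map_{\Unco{G}}\big(H(i,x),(j,y)\big)\simeq\coprod_{\alpha\colon i\to j}\Map_{G(j)}\big(\eta_jF(\alpha)x,y\big)\simeq\coprod_{\alpha\colon i\to j}\Map_{F(j)}\big(F(\alpha)x,\rho_jy\big)\simeq\Map_{\Unco{F}}\big((i,x),(j,\rho_jy)\big),
\]
using naturality of $\eta$ and the fibrewise adjunctions $\eta_j\dashv\rho_j$. Thus $x\mapsto\Map_{\Unco{G}}(Hx,(j,y))$ is represented by $(j,\rho_jy)$, so by the criterion for the existence of adjoints \cite{HTT}*{Proposition 5.2.4.2} the functor $H$ admits a right adjoint $J$ with $J(j,y)=(j,\rho_jy)$; in particular $J$ preserves fibres and restricts to $\rho_i$ on the fibre over $i$, hence is a functor over $\CI$. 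Tracing identity morphisms through the displayed chain identifies the counit $\varepsilon_{(j,y)}$ with the morphism over $\id_j$ that corresponds, in $G(j)$, to the fibrewise counit $\eta_j\rho_jy\to y$, and symmetrically the unit at $(i,x)$ with the fibrewise unit $x\to\rho_i\eta_ix$; hence the unit and counit lie over the identity natural transformation of $\id_\CI$.

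For the final assertion I would first reduce to $\CI=[1]$: by the part of the lemma already established applied to the restriction along $f\colon[1]\to\CI$, the functor $J$ pulls back to the right adjoint of the pullback of $H$, and whether $J$ carries the cocartesian lift of $f$ to a cocartesian edge is unaffected by this pullback. So take $\CI=[1]$ with $f\colon0\to1$. Given $y\in G(0)$, applying $J$ to the cocartesian lift $y\to G(f)y$ and factoring the result into a cocartesian edge followed by a fibrewise map produces a morphism $\theta_y\colon F(f)\rho_0y\to\rho_1G(f)y$ in $F(1)$, and $J$ preserves this edge precisely when $\theta_y$ is an equivalence. It then remains to recognise $\theta_y$ as the Beck–Chevalley mate of the naturality square relative to $\eta_0\dashv\rho_0$ and $\eta_1\dashv\rho_1$ (the canonical transformation $F(f)\rho_0\Rightarrow\rho_1G(f)$). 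For this I would apply $H$ to the morphism $J(y\to G(f)y)$, invoke the naturality square of the counit $\varepsilon$ over the edge $y\to G(f)y$, decompose every morphism appearing into its cocartesian and fibrewise parts using \Cref{lem:pushforward_composite}, and compare the fibrewise components; the triangle identities for $\eta_1\dashv\rho_1$ then pin $\theta_y$ down as the mate. Since $\theta_y$ is natural in $y$, it is an equivalence for all $y$ exactly when the mate transformation is invertible, i.e.\ when the displayed square is left adjointable.

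The construction of $J$, its fibrewise description, and the identification of the unit and counit are all immediate from the single mapping-space computation above, so those steps are routine. The step I expect to be the main obstacle is the last one: matching the fibrewise component $\theta_y$ of $J$ applied to a cocartesian edge with the Beck–Chevalley transformation of the naturality square. This requires carefully unwinding $J(y\to G(f)y)$ through the adjunction $H\dashv J$ and through the cocartesian/fibrewise factorisations, which is precisely the bookkeeping for which \Cref{lem:pushforward_composite} and the triangle identities are designed.
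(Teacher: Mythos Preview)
Your argument is correct, but the paper's own proof is a one-liner: it simply records that the lemma is the dual of \cite{HA}*{Proposition 7.3.2.6}, which establishes exactly this statement (for cartesian fibrations and fibrewise right adjoints). So the two approaches differ in that the paper defers entirely to Lurie, while you give a self-contained argument via the mapping-space description of $\Unco{F}$ and the representability criterion for adjoints. Your route has the advantage of making the fibrewise description of $J$ and the shape of the unit/counit visible directly from the computation, rather than extracting them from the proof of the cited result; the citation, of course, buys brevity. One small remark: your coproduct formula for $\Map_{\Unco{G}}(u,v)$ is only literally a coproduct when $\Map_\CI(i,j)$ is discrete---in general it is a fibration over this space with the indicated fibres---but since the adjunction equivalence you invoke is an equivalence of spaces over $\Map_\CI(i,j)$, the argument goes through unchanged. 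Your identification of $\theta_y$ with the Beck--Chevalley mate is indeed the substantive step; the paper itself later invokes \cite{HHLN1}*{Proposition 3.2.7} for the analogous identification in a related context, so your hands-on approach via \Cref{lem:pushforward_composite} and the triangle identities is in the same spirit as what is done elsewhere in the paper.
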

	
	\begin{proof}
		This is the dual of \cite{HA}*{Proposition 7.3.2.6}.
	\end{proof}
	
	\begin{remark}
		The condition that the right adjoint $J$ of $H$ again lies over $\CI$ and that the unit and counit natural transformations of the adjunction $H\dashv J$ live over the identity natural transformation of $\CI$ can be summarized by saying that $H$ is a left adjoint in the 2-category $\Cat_{/\CI}$. This is called a \emph{relative left adjoint} in \cite{HA}.
	\end{remark}
	
	\begin{remark}
		the unstraightening of a parametrized left adjoint $L\colon \CC\rightarrow \CD$ of $T$-categories is a  relative left adjoint over $T$.
	\end{remark}
	
	From the previous result we immediately obtain the following proposition.
	
	\begin{proposition}\label{prop:adj_part_lax_lim}
		Consider a marked category $(\CI,\CW)$, two diagrams $F,G\colon \CI\rightarrow \Cat$, and a natural transformation $L\colon F\Rightarrow G$ such that each $L_i\colon F(i)\rightarrow G(i)$ is a left adjoint and the square
		\[
		\begin{tikzcd}
			{F(i)} & {G(i)} \\
			{F(j)} & {G(j)}
			\arrow["{L_i}", from=1-1, to=1-2]
			\arrow["{F(f)}"', from=1-1, to=2-1]
			\arrow["{G(f)}", from=1-2, to=2-2]
			\arrow["{L_j}"', from=2-1, to=2-2]
		\end{tikzcd}
		\]
		is left adjointable for $f\in \CW$. Then the functor $L_*\colon \laxlimdag F \rightarrow \laxlimdag G$ is a left adjoint, with right adjoint given by postcomposing by the partially lax natural transformation given by passing to right adjoints pointwise, as in \Cref{lem:adj_on_unstr}.
	\end{proposition}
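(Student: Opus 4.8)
The plan is to deduce this directly from \Cref{lem:adj_on_unstr} by transporting the relative adjunction it produces along the functor that takes sections, and then checking that the result restricts to the marked subcategories. To begin, write $H = \Unco{L}\colon \Unco{F}\rightarrow \Unco{G}$ for the functor over $\CI$ classifying the natural transformation $L$, so that the functor called $L_*$ in the statement is precisely $H_*$. Since each $L_i$ is a left adjoint, \Cref{lem:adj_on_unstr} applies and exhibits $H$ as a left adjoint in the $2$-category $\Cat_{/\CI}$: the right adjoint $J\colon \Unco{G}\rightarrow \Unco{F}$ lies over $\CI$, is computed on the fibre over each $i$ by the right adjoint of $L_i$, and the unit $\eta\colon \id_{\Unco F}\Rightarrow JH$ and counit $\varepsilon\colon HJ\Rightarrow \id_{\Unco G}$ lie over the identity of $\CI$. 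Feeding the same lemma the hypothesis that the naturality square of $L$ at each $f\in\CW$ is left adjointable, we learn in addition that $J$ preserves the cocartesian edges lying over morphisms of $\CW$; in other words $J$ is a partially lax transformation from $G$ to $F$ in the sense recalled above, so that postcomposition with $J$ defines a functor $J_*\colon \laxlimdag G\rightarrow \laxlimdag F$.

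The next step is to promote $H\dashv J$ to an adjunction between the fully lax limits. Taking sections over $\CI$ is a $2$-functor $\Fun_\CI(\CI,-)\colon \Cat_{/\CI}\rightarrow \Cat$: it acts by postcomposition on $1$-morphisms and by whiskering by sections on $2$-morphisms, and the latter lands in categories of sections precisely because the $2$-cells in play lie over the identity of $\CI$. Applying this $2$-functor to the adjunction $H\dashv J$ yields an adjunction $H_*\dashv J_*$ between $\laxlim F = \Fun_\CI(\CI,\Unco{F})$ and $\laxlim G$, whose unit and counit are the whiskerings of $\eta$ and $\varepsilon$ by sections.

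It remains to restrict this adjunction along the inclusions $\laxlimdag F\subset \laxlim F$ and $\laxlimdag G\subset \laxlim G$. The functor $H_*$ carries $\laxlimdag F$ into $\laxlimdag G$ because $H$ preserves cocartesian edges over $\CW$ (this is what made $H_*$ a functor of partially lax limits in the first place), and dually $J_*$ carries $\laxlimdag G$ into $\laxlimdag F$ by the first paragraph. Since the above inclusions are fully faithful — each being a base change of the fully faithful inclusion $\Fun^\dagger(-,-)\hookrightarrow \Fun(-,-)$ of marked-edge-preserving functors — the standard fact that an adjunction between two categories restricts to any full subcategories preserved by both functors gives the desired adjunction $L_* = H_*\colon \laxlimdag F\rightleftarrows \laxlimdag G\cocolon J_*$, with unit and counit the restrictions of those from the previous step. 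The right adjoint is thus postcomposition with $J$, the partially lax transformation that is fibrewise the pointwise right adjoint of $L$, exactly as claimed. The only step I expect to demand genuine care is the passage from the relative adjunction over $\CI$ to the adjunction on section categories, together with the fully-faithfulness invoked to restrict it; granting \Cref{lem:adj_on_unstr} and the definitions, there is no further mathematical content.
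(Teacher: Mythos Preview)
Your proof is correct and follows essentially the same approach as the paper. The paper's proof is terser: having already noted that partially lax limits are functorial both in partially lax transformations and in $2$-cells between them lying over the identity of $\CI$, it simply invokes this $2$-functoriality directly on the relative adjunction $H\dashv J$ produced by \Cref{lem:adj_on_unstr}. You instead pass first through the fully lax limits via the $2$-functor $\Fun_\CI(\CI,-)$ and then restrict along the fully faithful inclusions of the partially lax limits; this is the same argument unpacked one level further, and your added care about why the restriction step works is a reasonable elaboration of what the paper leaves implicit.
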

	
	\begin{proof}
		By \Cref{lem:adj_on_unstr} the unstraightening of $L$ is a left adjoint in $\Cat_{/\CI}$, whose right adjoint preserves cocartesian edges over $\CW$. By applying the functoriality of partially lax limits in partially lax natural transformations we obtain the required adjunction.
	\end{proof}
	
	Next we introduce the contravariant functoriality of partially lax limits. Consider a functor of marked categories $h\colon (\CI,\CW)\rightarrow (\CJ,\CW')$ and a functor $F\colon \CJ\rightarrow \Cat$. Given a section $s\colon \CJ \rightarrow \Unco{F}$ we may precompose with the functor $h$ to obtain a functor $\CI\rightarrow \Unco{F}$, which we may interpret as a section $t\colon \CI \rightarrow \Unco{F}\times_\CJ \CI$ of the pullback $\Unco{F}\times_\CJ \CI\rightarrow \CI$. Recall that $\Unco{F}\times_\CJ \CI\rightarrow \CI$ is a cocartesian fibration which classifies the functor $F\circ h$. Furthermore an edge in $\Unco{F}\times_\CJ \CI$ is cocartesian if it is in $\Unco{F}$, and so we conclude  from the fact that $h$ is a functor of marked categories that if $s$ sent edges in $\CW'$ to cocartesian edges of $\Unco{F}$ then $t$ sends edges of $\CW$ to cocartesian edges of $\Unco{F\circ h}$. In total we obtain a functor
	\[
	\Fun^{\dagger}_\CJ(\CJ,\Unco{F}) \xrightarrow{h^*} \Fun^{\dagger}_\CJ(\CI,\Unco{F})\simeq \Fun^{\dagger}_{\CI}(\CI,\Unco{Fh}).
	\]
	Summarizing, partially lax limits are contravariantly functorial in functors of marked categories. We will again give a sufficient condition for the functor $h^*\colon \laxlimdag F\rightarrow \laxlimdag Fh$ constructed above to have a left and right adjoint. To do this we begin with a general categorical result.
	
	\begin{proposition}\label{prop:adj_cart}
		Consider a diagram
		\[\begin{tikzcd}
			X & Y \\
			\CI & \CJ
			\arrow["p"', from=1-1, to=2-1]
			\arrow["q", from=1-2, to=2-2]
			\arrow[""{name=0, anchor=center, inner sep=0}, "L", shift left=2, from=2-1, to=2-2]
			\arrow[""{name=1, anchor=center, inner sep=0}, "R", shift left=2, from=2-2, to=2-1]
			\arrow[""{name=2, anchor=center, inner sep=0}, "{\bbL}", shift left=2, from=1-1, to=1-2]
			\arrow[""{name=3, anchor=center, inner sep=0}, "{\bbR}", shift left=2, from=1-2, to=1-1]
			\arrow["\dashv"{anchor=center, rotate=-90}, draw=none, from=0, to=1]
			\arrow["\dashv"{anchor=center, rotate=-90}, draw=none, from=2, to=3]
		\end{tikzcd}\]
		of categories such that $p$ and $q$ are cartesian fibrations, both possible squares commute, and $p$ and $q$ map the unit and counit of $\bbL\dashv \bbR$ to that of $L\dashv R$. Suppose $X$ classifies the functor $G\colon \CI^{\op}\rightarrow \Cat$. Then given an object $i\in \CI$, the functor
		$\bbL\colon X_i\rightarrow Y_{L(i)}$ admits a right adjoint given by the composite
		\[Y_{L(i)}\xrightarrow{\bbR} X_{RL(i)} \xrightarrow{G(\eta)} X_{i}.\]
	\end{proposition}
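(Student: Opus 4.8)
The plan is to check the claimed adjunction at the level of mapping spaces. The one piece of structure that makes this work is the hypothesis that $p$ and $q$ carry the unit and counit of $\bbL\dashv\bbR$ to those of $L\dashv R$: this is precisely what forces the hom-space equivalence witnessing $\bbL\dashv\bbR$ to lie over the hom-space equivalence witnessing $L\dashv R$.

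First note that $\bbL$ and $\bbR$ restrict to the fibres: since $q\bbL=Lp$ one gets a functor $\bbL_i\colon X_i\to Y_{L(i)}$, and since $p\bbR=Rq$ one gets $\bbR_{L(i)}\colon Y_{L(i)}\to X_{RL(i)}$. Writing $\eta$ for the unit of $L\dashv R$, so that $\eta_i\colon i\to RL(i)$, I claim $\bbL_i$ has right adjoint $G(\eta_i)\circ\bbR_{L(i)}$. To see this, fix $x\in X_i$ and $y\in Y_{L(i)}$ and consider the adjunction equivalence $\Map_Y(\bbL x,y)\xrightarrow{\sim}\Map_X(x,\bbR y)$, which sends $g$ to $\bbR(g)\circ u_x$ where $u$ denotes the unit of $\bbL\dashv\bbR$. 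Applying $q$ to the source and $p$ to the target produces the square
\[\begin{tikzcd}
	{\Map_Y(\bbL x,y)} & {\Map_X(x,\bbR y)} \\
	{\Map_\CJ(L(i),L(i))} & {\Map_\CI(i,RL(i))}
	\arrow["\sim", from=1-1, to=1-2]
	\arrow["q"', from=1-1, to=2-1]
	\arrow["p", from=1-2, to=2-2]
	\arrow["\sim"', from=2-1, to=2-2]
\end{tikzcd}\]
whose bottom arrow is the hom-space equivalence of $L\dashv R$ (sending $h$ to $R(h)\circ\eta_i$), and which commutes because $p(\bbR(g)\circ u_x)=R(q(g))\circ p(u_x)=R(q(g))\circ\eta_i$, the last equality being the hypothesis on units. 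Under the bottom equivalence $\id_{L(i)}$ corresponds to $\eta_i$, so passing to the fibres of the two vertical maps over $\id_{L(i)}$ and over $\eta_i$ respectively, and using that the top map is an equivalence, yields an equivalence between these fibres. On the left a morphism of $Y$ between two objects of $Y_{L(i)}$ lies over $\id_{L(i)}$ exactly when it is a morphism of the fibre, so that fibre is $\Map_{Y_{L(i)}}(\bbL_i x,y)$; on the right, since $X$ classifies $G$ and $p$ is a cartesian fibration, the fibre of $\Map_X(x,\bbR y)\to\Map_\CI(i,RL(i))$ over $\eta_i$ is $\Map_{X_i}(x,G(\eta_i)\bbR_{L(i)}y)$. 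As all of this is natural in $x$ and $y$, the mapping-space criterion for adjoint functors (see e.g.\ \cite{HTT}*{Proposition 5.2.4.2}) gives the adjunction $\bbL_i\dashv G(\eta_i)\circ\bbR_{L(i)}$.

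The step I expect to be the main obstacle is the commutativity of the square above, i.e.\ that the adjunction equivalence of $\bbL\dashv\bbR$ genuinely lies over that of $L\dashv R$ with respect to $p$ and $q$. Morally this is immediate from the relative-adjunction hypothesis, but pinning it down requires unwinding the description of a hom-space equivalence in terms of the unit (or, dually, the counit) and keeping track of the relevant coherences. Should this prove awkward, an alternative is to avoid mapping spaces and write down the unit and counit directly: the unit at $x$ is the morphism $x\to G(\eta_i)\bbR\bbL x$ in $X_i$ obtained by factoring $u_x\colon x\to\bbR\bbL x$ (which lies over $\eta_i$) through a $p$-cartesian lift of $\eta_i$; the counit at $y$ is the composite $\bbL(G(\eta_i)\bbR y)\to\bbL\bbR y\xrightarrow{c_y}y$, which lies over $\epsilon_{L(i)}\circ L(\eta_i)=\id_{L(i)}$ by a triangle identity for $L\dashv R$ and so is a morphism of $Y_{L(i)}$. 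The triangle identities for $\bbL_i\dashv G(\eta_i)\circ\bbR_{L(i)}$ then reduce to the triangle identities for $\bbL\dashv\bbR$ and for $L\dashv R$, and I would finish the argument that way.
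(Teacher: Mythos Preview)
Your proposal is correct and follows essentially the same argument as the paper: build the commutative square with the two adjunction hom-equivalences as horizontal arrows and $q,p$ as vertical arrows, then pass to fibres over $\id_{L(i)}$ and $\eta_i$ and identify them (the paper invokes \cite{HTT}*{Proposition 2.4.4.2} for that identification). Your write-up is in fact more explicit than the paper's about why the square commutes, and your backup via unit/counit is unnecessary here but harmless.
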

	
	\begin{proof}
		Suppose $x,y$ are objects of $X_i$ and $Y_{L(i)}$ respectively. By assumption the bottom square of the following diagram commutes:
		\[\begin{tikzcd}
			{\Hom_{Y_{L(i)}}(\bbL(x),y)} & {\Hom_{X_i}(x,G(\eta)\bbR(y))} \\
			{\Hom_Y(\bbL(x), y)} & {\Hom_X(x, \bbR(y))} \\
			{\Hom_\CJ(L(i),L(i))} & {\Hom_\CI(i, RL(i))}
			\arrow["q", from=2-2, to=3-2]
			\arrow["p", from=2-1, to=3-1]
			\arrow["\sim", from=3-1, to=3-2]
			\arrow["\sim", from=2-1, to=2-2]
			\arrow[from=1-1, to=2-1]
			\arrow[dashed, from=1-1, to=1-2]
			\arrow[from=1-2, to=2-2]
		\end{tikzcd}\]
		By \cite{HTT}*{Proposition 2.4.4.2}, the fiber over $\id_{L(i)}$ and $\eta$ of $p$ and $q$ respectively are given by the top two spaces of the diagram, and therefore we obtain the dashed equivalence.
	\end{proof}
	
	\begin{example}\label{ex:Ar_cat_adj}
		Consider an adjunction $L\colon \CC\rightleftarrows \CD\cocolon R$ between two categories admitting pullbacks. Applying the previous proposition to the square
		\[\begin{tikzcd}
			{\Ar(\CC)} & {\Ar(\CD)} \\
			\CC & \CD
			\arrow["{\ev_1}", from=1-1, to=2-1]
			\arrow["{\ev_1}", from=1-2, to=2-2]
			\arrow[""{name=0, anchor=center, inner sep=0}, "L", shift left=2, from=2-1, to=2-2]
			\arrow[""{name=1, anchor=center, inner sep=0}, "R", shift left=2, from=2-2, to=2-1]
			\arrow[""{name=2, anchor=center, inner sep=0}, "\Ar(L)", shift left=2, from=1-1, to=1-2]
			\arrow[""{name=3, anchor=center, inner sep=0}, "\Ar(R)", shift left=2, from=1-2, to=1-1]
			\arrow["\dashv"{anchor=center, rotate=-90}, draw=none, from=0, to=1]
			\arrow["\dashv"{anchor=center, rotate=-90}, draw=none, from=2, to=3]
		\end{tikzcd}\]
		we conclude that $L\colon \CC_{/x}\to \CD_{/L(x)}$ admits a right adjoint, given by the composite
		\[\CD_{/L(x)} \xrightarrow{R} \CC_{/RL(x)} \xrightarrow{\eta^*} \CC_{/x}.\] 
		This is a well-known fact, proven as \cite{HTT}*{Proposition 5.2.5.1} for example.
	\end{example}
	
	We also record the dual proposition:
	
	\begin{proposition}\label{prop:adj_cocart}
		Consider a diagram
		\[\begin{tikzcd}
			X & Y \\
			\CI & \CJ
			\arrow["p"', from=1-1, to=2-1]
			\arrow["q", from=1-2, to=2-2]
			\arrow[""{name=0, anchor=center, inner sep=0}, "L", shift left=2, from=2-1, to=2-2]
			\arrow[""{name=1, anchor=center, inner sep=0}, "R", shift left=2, from=2-2, to=2-1]
			\arrow[""{name=2, anchor=center, inner sep=0}, "{\bbL}", shift left=2, from=1-1, to=1-2]
			\arrow[""{name=3, anchor=center, inner sep=0}, "{\bbR}", shift left=2, from=1-2, to=1-1]
			\arrow["\dashv"{anchor=center, rotate=-90}, draw=none, from=0, to=1]
			\arrow["\dashv"{anchor=center, rotate=-90}, draw=none, from=2, to=3]
		\end{tikzcd}\]
		of categories such that $p$ and $q$ are cocartesian fibrations and both possible squares commute. Suppose $q$ classifies the functor $F\colon \CJ \rightarrow \Cat$. Then given an object $j\in \CJ$, the functor
		$\bbR\colon Y_j\rightarrow X_{R(j)}$ admits a left adjoint given by the composite
		\[X_{R(j)}\xrightarrow{\bbL} Y_{LR(j)} \xrightarrow{F(\epsilon)} Y_{j}.\qednow\]
	\end{proposition}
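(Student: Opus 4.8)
The plan is to deduce this from \Cref{prop:adj_cart} by applying that proposition to the opposite square. Passing to $(-)^{\op}$ turns the cocartesian fibrations $p\colon X\to\CI$ and $q\colon Y\to\CJ$ into cartesian fibrations $p^{\op}\colon X^{\op}\to\CI^{\op}$ and $q^{\op}\colon Y^{\op}\to\CJ^{\op}$, turns the adjunctions $\bbL\dashv\bbR$ and $L\dashv R$ into $\bbR^{\op}\dashv\bbL^{\op}$ and $R^{\op}\dashv L^{\op}$, and exhibits $q^{\op}$ as a cartesian fibration over $\CJ^{\op}$ classifying the same functor $F$, now viewed as a functor out of $(\CJ^{\op})^{\op}$. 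Thus the hypotheses of \Cref{prop:adj_cart} hold for this opposite square, with the roles of $X$ and $Y$ and of $\CI$ and $\CJ$ interchanged, and its conclusion says precisely that $\bbR^{\op}\colon (Y_j)^{\op}\to (X_{R(j)})^{\op}$ has a right adjoint $(X_{R(j)})^{\op}\xrightarrow{\bbL^{\op}}(Y_{LR(j)})^{\op}\xrightarrow{F(\epsilon)^{\op}}(Y_j)^{\op}$. Taking opposites back exhibits $X_{R(j)}\xrightarrow{\bbL}Y_{LR(j)}\xrightarrow{F(\epsilon)}Y_j$ as a left adjoint of $\bbR\colon Y_j\to X_{R(j)}$, as claimed.

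The one point requiring care is the bookkeeping of adjunction data under $(-)^{\op}$: the unit of $\bbL\dashv\bbR$ becomes the counit of $\bbR^{\op}\dashv\bbL^{\op}$ and the counit of $L\dashv R$ becomes the unit of $R^{\op}\dashv L^{\op}$, so the compatibility hypothesis of \Cref{prop:adj_cart}, namely that $p$ and $q$ carry the unit and counit of the fibrewise adjunction to those of the base adjunction, dualizes to the analogous compatibility, which one should also assume here; this is exactly what makes the relevant square of mapping spaces commute.

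Unwinding this, the direct argument parallels the proof of \Cref{prop:adj_cart}: for $x\in X_{R(j)}$ and $y\in Y_j$ one forms the diagram
\[\begin{tikzcd}
	{\Hom_{Y_j}(F(\epsilon)\bbL(x),y)} & {\Hom_{X_{R(j)}}(x,\bbR(y))} \\
	{\Hom_Y(\bbL(x),y)} & {\Hom_X(x,\bbR(y))} \\
	{\Hom_\CJ(LR(j),j)} & {\Hom_\CI(R(j),R(j)),}
	\arrow["q", from=2-1, to=3-1]
	\arrow["p", from=2-2, to=3-2]
	\arrow["\sim", from=3-1, to=3-2]
	\arrow["\sim", from=2-1, to=2-2]
	\arrow[from=1-1, to=2-1]
	\arrow[from=1-2, to=2-2]
	\arrow[dashed, from=1-1, to=1-2]
\end{tikzcd}\]
in which the middle equivalence is the fibrewise adjunction $\bbL\dashv\bbR$, the bottom equivalence is $L\dashv R$, and the bottom square commutes by the compatibility of $p$ and $q$ with the adjunction units. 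Under the bottom equivalence the counit $\epsilon_j$ corresponds to $\id_{R(j)}$ by the triangle identity; by \cite{HTT}*{Proposition 2.4.4.2} the fibre of $p$ over $\id_{R(j)}$ is $\Hom_{X_{R(j)}}(x,\bbR(y))$, and the fibre of $q$ over $\epsilon_j$ is $\Hom_{Y_j}(F(\epsilon_j)\bbL(x),y)$, since a cocartesian pushforward along $\epsilon_j$ computes $F(\epsilon_j)\bbL(x)$. Passing to fibres over corresponding points gives the dashed equivalence, natural in $x$ and $y$, which is the asserted adjunction.

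I do not anticipate a genuine obstacle here: the statement is formal once \Cref{prop:adj_cart} is in hand, and the only things to get right are the direction of the comparison map $F(\epsilon)$ and the identification of the pertinent fibres of $p$ and $q$ with mapping spaces in the fibres $X_{R(j)}$ and $Y_j$.
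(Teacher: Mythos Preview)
Your proposal is correct and matches the paper's approach exactly: the paper states the proposition with a bare \qednow after introducing it as ``the dual proposition,'' so the intended proof is precisely the opposite-category reduction to \Cref{prop:adj_cart} that you carry out. Your observation that the unit/counit compatibility hypothesis present in \Cref{prop:adj_cart} is tacitly needed here as well is accurate and worth noting.
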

	
	\begin{example}
		Applying the previous proposition to the situation of \Cref{ex:Ar_cat_adj}
		we conclude that $R\colon \CD_{/x}\to \CC_{/R(x)}$ admits a left adjoint, given by the composite
		\[\CC_{/R(x)} \xrightarrow{L} \CC_{/LR(x)} \xrightarrow{\epsilon_!} \CC_{/x}.\] 
	\end{example}
	
	We now apply these results to construct adjoints to the contravariant functoriality of partially lax limits. 
	
	\begin{proposition}\label{prop:right_adj_to_res_in_lim}
		Consider a functor $F\colon \CI\rightarrow \Cat^{L}$ and an adjunction $L\colon \CI \rightleftarrows \CJ\cocolon R$. Then $R^*\colon \laxlim F\rightarrow \laxlim FR$ is a left adjoint.
	\end{proposition}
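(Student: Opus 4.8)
The plan is to deduce this from \Cref{prop:adj_cart}, applied to functor categories mapping into $\Unco{F}$ and into $\CI$.

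First I would set up the relevant cartesian fibrations. Since $F\colon\CI\rightarrow\Cat^{L}$, the projection $p\colon\Unco{F}\rightarrow\CI$ is a cartesian fibration by \cite{HTT}*{Corollary 5.2.2.5}, so post-composition with $p$ yields cartesian fibrations $p_{*}\colon\Fun(\CA,\Unco{F})\rightarrow\Fun(\CA,\CI)$ for $\CA=\CI$ and for $\CA=\CJ$. Since the cocartesian unstraightening is compatible with base change we have $\Unco{FR}\simeq\Unco{F}\times_{\CI}\CJ$, and hence $\laxlim F$ is the fibre of $p_{*}\colon\Fun(\CI,\Unco{F})\rightarrow\Fun(\CI,\CI)$ over $\id_{\CI}$, while $\laxlim FR$ is the fibre of $p_{*}\colon\Fun(\CJ,\Unco{F})\rightarrow\Fun(\CJ,\CI)$ over the object $R\in\Fun(\CJ,\CI)$. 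Under these identifications the functor $R^{*}$ is the restriction to fibres of the precomposition functor $R^{*}\colon\Fun(\CI,\Unco{F})\rightarrow\Fun(\CJ,\Unco{F})$, and it lies over the precomposition functor $R^{*}\colon\Fun(\CI,\CI)\rightarrow\Fun(\CJ,\CI)$, which sends $\id_{\CI}$ to $R$.

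Next I would invoke the $2$-functoriality of $\Fun(-,-)$ in its first variable: the adjunction $L\dashv R$ induces, for every category $\CA$, an adjunction $R^{*}\dashv L^{*}$ between the precomposition functors $\Fun(\CI,\CA)\rightleftarrows\Fun(\CJ,\CA)$, whose unit and counit are obtained by whiskering the unit $\eta$ and counit of $L\dashv R$, and this is natural in $\CA$. Applied to $p\colon\Unco{F}\rightarrow\CI$, this produces precisely the input of \Cref{prop:adj_cart}: a square whose two horizontal adjunctions are the adjunctions $R^{*}\dashv L^{*}$ on $\Fun(-,\Unco{F})$ and on $\Fun(-,\CI)$, whose two vertical cartesian fibrations are the maps $p_{*}$, both of whose squares commute, and for which $p_{*}$ carries the unit and counit of the top adjunction to those of the bottom one. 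Therefore \Cref{prop:adj_cart}, applied to the object $\id_{\CI}$, shows that $R^{*}\colon\laxlim F\rightarrow\laxlim FR$ admits a right adjoint; concretely this right adjoint sends a section $t\colon\CJ\to\Unco{F}$ to its precomposite $tL\colon\CI\to\Unco{F}$, which lies over $RL$, and then transports $tL$ along a cartesian lift of the unit $\id_{\CI}\Rightarrow RL$.

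The entire conceptual content sits in \Cref{prop:adj_cart}, so there is no genuinely hard step; the reduction is just bookkeeping about variances. The one point that deserves care is the assertion that post-composition with a cartesian fibration is again a cartesian fibration between functor categories, and that this assignment is sufficiently $2$-natural for the whiskered units and counits to be transported as required. This is standard, but it is exactly what makes the hypotheses of \Cref{prop:adj_cart} hold on the nose, so I would make sure to cite or spell it out precisely.
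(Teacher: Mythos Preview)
Your proposal is correct and follows essentially the same route as the paper: set up the square of functor categories with vertical maps $p_*$, use that $p$ is bicartesian (HTT 5.2.2.5) so that $p_*$ is a cartesian fibration (the paper cites HTT 3.1.2.1(1) for the point you flagged), and then apply \Cref{prop:adj_cart} at $\id_{\CI}$ to obtain the right adjoint $\Theta(\eta^*)L^*$. Your description of the right adjoint as ``precompose with $L$ then transport along a cartesian lift of the unit'' is exactly the paper's $\Theta(\eta^*)L^*$.
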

	
	\begin{proof}
		We can build the square
		\[\begin{tikzcd}
			{\Fun(\CI,\Unco{F})} & {\Fun(\CJ,\Unco{F})} \\
			{\Fun(\CI,\CI)} & {\Fun(\CJ,\CI).}
			\arrow["{p_*}"', from=1-1, to=2-1]
			\arrow["{p_*}", from=1-2, to=2-2]
			\arrow[""{name=0, anchor=center, inner sep=0}, "R^*", shift left=2, from=2-1, to=2-2]
			\arrow[""{name=1, anchor=center, inner sep=0}, "L^*", shift left=2, from=2-2, to=2-1]
			\arrow[""{name=2, anchor=center, inner sep=0}, "{L^*}", shift left=2, from=1-2, to=1-1]
			\arrow[""{name=3, anchor=center, inner sep=0}, "{R^*}", shift left=2, from=1-1, to=1-2]
			\arrow["\dashv"{anchor=center, rotate=-90}, draw=none, from=3, to=2]
			\arrow["\dashv"{anchor=center, rotate=-90}, draw=none, from=0, to=1]
		\end{tikzcd}\]
		By \cite{HTT}*{Corollary 5.2.2.5} the cocartesian fibration $p\colon \Unco{F}\rightarrow \CI$ classifying $F$ is also a cartesian fibration. By \cite{HTT}*{Proposition 3.1.2.1(1)} the functors $p_*$ are both also cartesian fibrations, and therefore this square is of the form required to apply \Cref{prop:adj_cart}. In particular considering the object $\id_\CI$ in $\Fun(\CI,\CI)$ we obtain an adjunction
		\[R^*\colon \Fun_\CI(\CI,\Unco{F}) \rightleftarrows \Fun_{\CI}(\CJ,\Unco{F})\cocolon \Theta(\eta^*) L^*,\] 
		where $\Theta$ refers to the functor classified by $p_*$. Note that the left hand category is equal to sections of $\Unco{F}$ while the right-hand side is equivalent to sections of $\Unco{FR}$. These are equivalent to $\laxlim F$ and $\laxlim FR$ respectively, and so we conclude.
	\end{proof}
	
	\begin{remark}
		We continue to use the same notation as in the proposition above, and further write $G\colon \CI^{\op}\rightarrow \Cat^{R}$ for the diagram of right adjoints associated to $F$. It is potentially illuminating to informally summarize the adjunction constructed above using the notation of \Cref{rem:sections_unwind}. In this notation, the functor $R^*$ sends the object
		\[
		\{i\mapsto X_i\in F(i)\quad \alpha \mapsto f_{\alpha}\colon F(\alpha) X_i\rightarrow X_i'\}
		\] 
		in $\laxlim F$ to the object 
		\[
		\{j\mapsto X_{R(j)} \in F(R(i)) \quad \beta \mapsto f_{R(\beta)}\colon F(R(\beta)) X_{R(i)}\rightarrow X_{R(j)}\}
		\] 
		in $\laxlim FR$. We will see in the proof of the next proposition that the right adjoint sends the object
		\[
		\{j\mapsto Y_j \in FR(j),\quad \beta\mapsto f_{\beta}\colon FR(\beta) Y_j\rightarrow Y_{j'}\}
		\] 
		to the object
		\[
		\hspace{-14.5001pt}\{i\mapsto G(\eta_i) Y_{L(i)} \in F(i), \quad \alpha \mapsto \big[F(\alpha)G(\eta_i) Y_{L(i)} \xrightarrow{BC} G(\eta_{i'})F(RL(\alpha))Y_{L(i)} \xrightarrow{G(\eta_{i'}) f_{L(\alpha)}} G(\eta_{i'})Y_{L(i')}\big]\},
		\]
		where $\mathrm{BC}$ denotes the Beck--Chevalley transformation.
	\end{remark}
	
	This informal description suggests when the adjunction constructed above restricts to one between \emph{partially} lax limits. We make this precise in the following proposition.
	
	\begin{proposition}\label{prop:right_adj_to_res_in_part}
		In the situation of \Cref{prop:right_adj_to_res_in_lim}, suppose further that $\CI$ and $\CJ$ are marked by $\CW$ and $\CW'$ respectively, that $L$ and $R$ both preserve marked edges, and that the square
		\[\begin{tikzcd}
			{\CC_i} & {\CC_{RL(i)}} \\
			{\CC_{i'}} & {\CC_{RL(i')}}
			\arrow["{F(RL(\alpha))}", from=1-2, to=2-2]
			\arrow["{F(\eta_i)}", from=1-1, to=1-2]
			\arrow["{F(\eta_{i'})}"', from=2-1, to=2-2]
			\arrow["{F(\alpha)}"', from=1-1, to=2-1]
		\end{tikzcd}\]
		induced by the naturality square of $\eta$ is left adjointable whenever $\alpha$ is marked, Then $R^*$ and its adjoint restrict to an adjunction between partially lax limits.
	\end{proposition}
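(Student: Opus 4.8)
The plan is to bootstrap the adjunction $R^* \dashv J$ on the full lax limits constructed in \Cref{prop:right_adj_to_res_in_lim} — here I write $J := \Theta(\eta^*)L^*$ for the right adjoint produced there — to an adjunction between the partially lax limits. The key observation is the elementary fact that an adjunction $R^*\dashv J$ with $R^*\colon\laxlim F\rightleftarrows\laxlim FR\cocolon J$ automatically restricts to the full subcategories $\laxlimdag F\subseteq\laxlim F$ and $\laxlimdag FR\subseteq\laxlim FR$ as soon as $R^*$ carries $\laxlimdag F$ into $\laxlimdag FR$ and $J$ carries $\laxlimdag FR$ into $\laxlimdag F$: in that case the mapping-space equivalences $\Hom(R^*(-),-)\simeq\Hom(-,J(-))$ as well as the unit and counit all restrict verbatim, since these inclusions are full. (That $\laxlimdag F\subseteq\laxlim F$ is a full subcategory is immediate from \Cref{def:partially_lax_limits}, the distinguishing condition on a section being the property, described in \Cref{rem:sections_unwind}, that $s_\alpha$ be an equivalence for all $\alpha\in\CW$.) So the entire content is the verification of these two containments.

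The first containment, $R^*(\laxlimdag F)\subseteq\laxlimdag FR$, is immediate from the contravariant functoriality of partially lax limits recorded just before \Cref{prop:adj_cart}: since $R$ preserves marked edges by hypothesis, it is a functor of marked categories $(\CJ,\CW')\to(\CI,\CW)$, and precomposition with $R$ therefore sends a section $\CI\to\Unco{F}$ that is cocartesian over $\CW$ to a section $\CJ\to\Unco{FR}$ that is cocartesian over $\CW'$.

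For the second containment I would use the explicit pointwise description of $J$ from the preceding remark (which itself is obtained by unwinding \Cref{prop:adj_cart} inside the proof of \Cref{prop:right_adj_to_res_in_lim}). Writing an object of $\laxlimdag FR$ in the informal notation of \Cref{rem:sections_unwind} as $\{\,j\mapsto Y_j,\ \beta\mapsto f_\beta\colon FR(\beta)Y_j\to Y_{j'}\,\}$ with $f_\beta$ an equivalence for each $\beta\in\CW'$, its image $J(\{Y_\bullet\})$ has underlying objects $i\mapsto G(\eta_i)Y_{L(i)}$, and its structure map at an edge $\alpha\colon i\to i'$ of $\CI$ is the composite
\[
F(\alpha)G(\eta_i)Y_{L(i)}\xrightarrow{\ \mathrm{BC}\ }G(\eta_{i'})F(RL(\alpha))Y_{L(i)}\xrightarrow{\ G(\eta_{i'})(f_{L(\alpha)})\ }G(\eta_{i'})Y_{L(i')},
\]
where $\mathrm{BC}$ is the Beck--Chevalley transformation of the naturality square of $\eta$ at $\alpha$. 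Now take $\alpha\in\CW$. Since $L$ preserves marked edges, $L(\alpha)\in\CW'$, so $f_{L(\alpha)}$ is an equivalence and hence so is $G(\eta_{i'})(f_{L(\alpha)})$; and $\mathrm{BC}$ is an equivalence precisely by the left-adjointability hypothesis imposed on the naturality square of $\eta$ at $\alpha$. Thus the structure map of $J(\{Y_\bullet\})$ at $\alpha$ is an equivalence, which is exactly the assertion $J(\{Y_\bullet\})\in\laxlimdag F$. Combined with the first paragraph this finishes the proof.

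The main obstacle I anticipate is not the logical skeleton, which is short, but pinning down the explicit formula for $J$: one must confirm that the abstract right adjoint of \Cref{prop:adj_cart} really is computed pointwise by $G(\eta_i)(-)$ with the Beck--Chevalley structure maps written above, i.e.\ chase through the cartesian-fibration functoriality $\Theta$ classified by $p_*$ and the relevant unstraightenings. If one wishes to sidestep this bookkeeping, an alternative is to check $J(\{Y_\bullet\})\in\laxlimdag F$ fibrationally — pull the whole adjunction square back along $\alpha\colon[1]\to\CI$ for each $\alpha\in\CW$ and apply the $[1]$-level analysis underlying \Cref{rem:lim_of_cocart_edges} together with \Cref{lem:pushforward_composite} to identify the resulting structure map — but this route consumes the same left-adjointability input in the end, so I expect the formula-based argument to be the cleaner write-up.
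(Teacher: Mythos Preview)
Your proposal is correct and follows the same overall strategy as the paper: show that both $R^*$ and $J=\Theta(\eta^*)L^*$ preserve the full subcategories of partially cocartesian sections, then invoke fullness to restrict the adjunction. The treatment of $R^*$ is identical.

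The one point worth flagging is your reliance on the explicit formula for $J$ ``from the preceding remark.'' In the paper that remark is a \emph{forward reference}: it announces the formula and says it will be verified in the proof of this very proposition. So the paper's proof does not take the formula for granted but actually establishes it, via precisely the fibrational route you sketch as your ``alternative'': it builds a commutative diagram in $\Unco{F}$ over the naturality square of $\eta$, uses the uniqueness of lifts against cartesian edges to identify the structure map $\phi$ of $J(s)$ at $\alpha$ with the left vertical composite, and then cites \cite{HHLN1}*{Proposition~3.2.7} to recognise the relevant piece as the Beck--Chevalley transformation. Your instinct that ``this route consumes the same left-adjointability input in the end'' is exactly right, but you should be aware that in the paper's logical order this diagram chase \emph{is} the proof, not an optional cleanup --- the formula-based shortcut you prefer is only available once that work has been done.
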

	
	\begin{proof}
		Recall that we have constructed an adjunction 
		\[
		R^*\colon \laxlim F \rightleftarrows \laxlim FR\cocolon \Theta(\eta^*)L^*.
		\] 
		We want to show that both functors restrict to partially lax limits. In the case of $R^*$ this is clear, because $R$ is a functor of marked categories. However showing that $\Theta(\eta^*)L^*$ restricts appropriately is more subtle. We fix an object $s\in \laxlimdag FR$, i.e.~a functor $s\colon \CJ\rightarrow \Unco{F}$ living over $R$ which sends edges in $\CW'$ to cocartesian edges. Recall that by \cite{HTT}*{Proposition 3.1.2.1(2)} a morphism in $\Fun(\CI,\Unco{F})$ is $p_*$-cartesian if and only if each component is $p$-cartesian, and therefore the value of $\Theta(\eta^*)s(L(i))$ at $i$ is equal to the source of the essentially unique cartesian arrow $G(\eta_i)s(L(i)) \twoheadrightarrow s(L(i))$ which lives over $\eta\colon i \rightarrow LR(i)$, i.e.~the image of $Y(L(i))$ under the functor $G(\eta_i)$. Similarly given a map $\alpha\colon i\rightarrow i'$ in $\CI$, the map $\Theta(\eta^*)L^*s(\alpha)$ is homotopic to the unique dotted map
		\[\begin{tikzcd}
			{G(\eta_i)s(L(i))} & {s(L(i))} \\
			{G(\eta_j)s(L(i'))} & {s(L(i'))}
			\arrow[from=1-2, to=2-2]
			\arrow[two heads, from=1-1, to=1-2]
			\arrow[two heads, from=2-1, to=2-2]
			\arrow["\phi", dashed, from=1-1, to=2-1]
		\end{tikzcd}\] for which the resulting square commutes and lives over the square
		\[\begin{tikzcd}
			i & {RL(i)} \\
			{i'} & {RL(i').}
			\arrow["{\eta_{i'}}", from=2-1, to=2-2]
			\arrow["{\eta_i}", from=1-1, to=1-2]
			\arrow["{RL(f)}", from=1-2, to=2-2]
			\arrow["f"', from=1-1, to=2-1]
		\end{tikzcd}\]
		in $\CI$. However we can build the following commutative diagram 
		\[\begin{tikzcd}
			{G(\eta_i)s(L(i))} & {s(L(i))} \\
			{F(\alpha)G(\eta_i)s(L(i))} \\
			{G(\eta_{i'})F(RL(\alpha))s(L(i))} & {F(\alpha)s(L(i))} \\
			{G(\eta_{i'})s(L(i'))} & {s(L(i'))}
			\arrow[two heads, from=1-1, to=1-2]
			\arrow[two heads, from=4-1, to=4-2]
			\arrow[tail, from=1-2, to=3-2]
			\arrow["{f_{L(\alpha)}}", from=3-2, to=4-2]
			\arrow[tail, from=1-1, to=2-1]
			\arrow["{G(\eta_{i'})f_{L(\alpha)}}"', from=3-1, to=4-1]
			\arrow["BC"', from=2-1, to=3-1]
			\arrow[two heads, from=3-1, to=3-2]
		\end{tikzcd}\]
		in $X$ which lives over 
		\[\begin{tikzcd}
			i & {RL(i)} \\
			{i'} \\
			{i'} & {RL(i')} \\
			{i'} & {RL(i').}
			\arrow[from=1-1, to=1-2]
			\arrow[from=4-1, to=4-2]
			\arrow["{RL(\alpha)}", from=1-2, to=3-2]
			\arrow[Rightarrow, no head, from=3-2, to=4-2]
			\arrow["\alpha"', from=1-1, to=2-1]
			\arrow[Rightarrow, no head, from=3-1, to=4-1]
			\arrow[from=3-1, to=3-2]
			\arrow[Rightarrow, no head, from=2-1, to=3-1]
		\end{tikzcd}\] In this diagram the two-headed arrows are cartesian and the tailed arrows are cocartesian. By conclude that the composite along the left hand side is homotopic to $\phi$. The identification of the map ${F(\alpha)G(\eta_i)s(L(i))} \rightarrow {G(\eta_{i'})F(RL(\alpha))s(L(i))}$ with the Beck--Chevalley transformation of the square 
		\[\begin{tikzcd}
			{\CC_{RL(i)}} & {\CC_i} \\
			{\CC_{RL(i')}} & {\CC_{i'}}
			\arrow["{F(RL(\alpha))}", from=1-2, to=2-2]
			\arrow["{F(\eta_i)}", from=1-1, to=1-2]
			\arrow["{F(\eta_{i'})}"', from=2-1, to=2-2]
			\arrow["{F(\alpha)}"', from=1-1, to=2-1]
		\end{tikzcd}\]
		is given in \cite{HHLN1}*{Proposition 3.2.7}. Our assumptions on $L$ and $R$ imply that this map, as well as the map $G(\eta_{i'})f_{L(\alpha)}$, are equivalences. Therefore the map $\phi$ is cocartesian, and we conclude that $\Theta(\eta^*) L^*$ preserves partially cocartesian sections. 
	\end{proof}
	
	We can also give a sufficient condition for the contravariant restriction along a functor of marked categories to admit a left adjoint.
	
	\begin{proposition}\label{prop:left_adj_to_res_in_lim}
		Consider a functor $F\colon \CJ\rightarrow \Cat$ and an adjunction $L\colon \CI \rightleftarrows \CJ\cocolon R$. Then $L^*\colon \laxlim F\rightarrow \laxlim FL$ is a right adjoint. If we furthermore suppose that $\CI$ and $\CJ$ are marked and the functors $L$ and $R$ preserve the markings, then $L^*$ and its right adjoint preserve objects of the partially lax limit. In particular they restrict to an adjunction between partially lax limits.
	\end{proposition}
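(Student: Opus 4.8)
The plan is to dualize the proof of \Cref{prop:right_adj_to_res_in_lim}, replacing \Cref{prop:adj_cart} by its dual \Cref{prop:adj_cocart}. Write $\pi\colon\Unco F\to\CJ$ for the cocartesian fibration classifying $F$. Precomposition along $L$ and $R$ assembles into a square with objects $\Fun(\CI,\Unco F)$, $\Fun(\CJ,\Unco F)$ on top and $\Fun(\CI,\CJ)$, $\Fun(\CJ,\CJ)$ on the bottom, with vertical maps the postcomposition functors $\pi_*$ and with horizontal adjunctions the precomposition adjunctions $((-)\circ R)\dashv((-)\circ L)$ induced by $L\dashv R$; both squares commute, compatibly with the adjunction data. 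The crucial point --- and the reason that, unlike in \Cref{prop:right_adj_to_res_in_lim}, no hypothesis on $F$ is needed --- is that here I only need the vertical maps to be \emph{cocartesian} fibrations, and this holds by \cite{HTT}*{Proposition 3.1.2.1} because $\pi$ is a cocartesian fibration, with no assumption on $F$.

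I would then apply \Cref{prop:adj_cocart} to the object $\id_\CJ\in\Fun(\CJ,\CJ)$. Its fiber under $\pi_*$ is $\Fun_\CJ(\CJ,\Unco F)=\laxlim_\CJ F$, while $(-)\circ L$ carries $\id_\CJ$ to $L\in\Fun(\CI,\CJ)$, whose fiber under $\pi_*$ is $\Fun_\CI(\CI,\Unco{FL})=\laxlim_\CI FL$ (pulling $\pi$ back along $L$ yields the cocartesian fibration classifying $FL$), and the induced fibrewise functor between these fibers is exactly $L^*$. \Cref{prop:adj_cocart} therefore shows that $L^*$ is a right adjoint, and reads off its left adjoint $L_!$ as the composite $\laxlim_\CI FL\xrightarrow{(-)\circ R}\laxlim_\CJ FLR\xrightarrow{\widehat F(\epsilon)}\laxlim_\CJ F$, where $\widehat F(\epsilon)$ denotes the functoriality of the lax limit in the natural transformation $F(\epsilon)\colon FLR\Rightarrow F$ induced by the counit $\epsilon\colon LR\Rightarrow\id_\CJ$.

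For the marked statement I would argue as follows. First, $L^*$ preserves partially lax sections automatically: it is the contravariant functoriality of partially lax limits along the marking-preserving functor $L\colon(\CI,\CW)\to(\CJ,\CW')$, and an edge of $\Unco{FL}$ is cocartesian precisely when its image in $\Unco F$ is, while $L$ sends $\CW$ into $\CW'$. Second, for $L_!$ I would unwind the formula above, using \Cref{lem:pushforward_composite} and the section picture of \Cref{rem:sections_unwind}, to see that for $t\in\laxlimdag_\CI FL$ the section $L_!(t)$ has underlying objects $L_!(t)(j)\simeq(\epsilon_j)_!\,t(Rj)$ and that its structure map along an edge $\beta\colon j\to j'$ of $\CJ$ is the composite $F(\beta)(\epsilon_j)_!\,t(Rj)\xrightarrow{\sim}(\epsilon_{j'})_!\,F(LR\beta)\,t(Rj)\xrightarrow{(\epsilon_{j'})_!(t_{R\beta})}(\epsilon_{j'})_!\,t(Rj')$, the first arrow being the canonical equivalence of composites of cocartesian pushforwards witnessing the naturality identity $\beta\circ\epsilon_j\simeq\epsilon_{j'}\circ LR\beta$ in $\CJ$, and $t_{R\beta}$ the structure map of $t$ at $R\beta$. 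If $\beta\in\CW'$ then $R\beta\in\CW$ since $R$ is marking-preserving, so $t_{R\beta}$, and hence the whole composite, is an equivalence; thus $L_!(t)\in\laxlimdag_\CJ F$, and $L^*\dashv L_!$ restricts to an adjunction $\laxlimdag_\CJ F\rightleftarrows\laxlimdag_\CI FL$.

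I expect the main obstacle to be this last identification of the structure maps of $L_!(t)$, that is, translating the abstract description coming out of \Cref{prop:adj_cocart} (push the restricted section $t\circ R$ forward along $\epsilon$) into the section picture. The reward for carrying this out carefully is the observation that, in contrast with \Cref{prop:right_adj_to_res_in_part}, no Beck--Chevalley condition enters: the only square involved is the naturality square of $\epsilon$, which lives entirely in $\CJ$ and is witnessed by a genuine equivalence of composites of cocartesian pushforwards, rather than by a Beck--Chevalley transformation that one would need to hypothesize invertible.
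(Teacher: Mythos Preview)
Your proposal is correct and follows essentially the same approach as the paper: build the square with vertical maps $\pi_*$, observe these are cocartesian fibrations by \cite{HTT}*{Proposition 3.1.2.1}, apply \Cref{prop:adj_cocart} at $\id_\CJ$, and read off the left adjoint as $R^*$ followed by cocartesian pushforward along $\epsilon$. For the marked statement the paper simply says ``an analysis of the functor $\Gamma(\epsilon^*) R^*$, as in \Cref{prop:right_adj_to_res_in_part}''; your unwinding of the structure maps via \Cref{lem:pushforward_composite}, and your explanation of why no Beck--Chevalley hypothesis appears (both pushforwards are cocartesian, so the comparison is the canonical equivalence witnessing naturality of $\epsilon$), is exactly that analysis made explicit. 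One small slip: in your final sentence you wrote $L^*\dashv L_!$, but of course you mean $L_!\dashv L^*$.
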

	
	\begin{proof}
		We can build the square
		\[\begin{tikzcd}
			{\Fun(\CI,\Unco{F})} & {\Fun(\CJ,\Unco{F})} \\
			{\Fun(\CI,\CJ)} & {\Fun(\CJ,\CJ).}
			\arrow["{p_*}"', from=1-1, to=2-1]
			\arrow["{p_*}", from=1-2, to=2-2]
			\arrow[""{name=0, anchor=center, inner sep=0}, "R^*", shift left=2, from=2-1, to=2-2]
			\arrow[""{name=1, anchor=center, inner sep=0}, "L^*", shift left=2, from=2-2, to=2-1]
			\arrow[""{name=2, anchor=center, inner sep=0}, "{L^*}", shift left=2, from=1-2, to=1-1]
			\arrow[""{name=3, anchor=center, inner sep=0}, "{R^*}", shift left=2, from=1-1, to=1-2]
			\arrow["\dashv"{anchor=center, rotate=-90}, draw=none, from=3, to=2]
			\arrow["\dashv"{anchor=center, rotate=-90}, draw=none, from=0, to=1]
		\end{tikzcd}
		\]
		Note this square is of the form required by \Cref{prop:adj_cocart}. In particular considering the object $\id_\CJ$ in $\Fun(\CJ,\CJ)$ we obtain an adjunction
		\[\Gamma(\epsilon^*) R^*\colon \Fun_{L}(\CI,\Unco{F}) \rightleftarrows \Fun_{\CJ}(\CJ,\Unco{F})\cocolon  L^*,\] 
		where $\Gamma$ is the functor associated to the cocartesian fibration $p_*$. Note that the right-hand side is equal to sections of $\Unco{F}$ while the left-hand side is equivalent to sections of $\Unco{FL}$. These are equivalent to $\laxlim F$ and $\laxlim FL$ respectively, and so we conclude the first statement. The second statement follows from an analysis of the functor $\Gamma(\epsilon^*) R^*$, as in \Cref{prop:right_adj_to_res_in_part}.
	\end{proof}
	
	\begin{remark}
		Similarly to before one can show that the left adjoint to $L^*$ sends an object of the form
		\[
		\{i\mapsto Y_i \in FL(i),\quad \alpha\mapsto f_{\alpha}\colon FL(\alpha) Y_i\rightarrow Y_{i'}\}
		\] 
		in $\laxlim FL$ to an object of the form
		\[
		\hspace{-11.912pt}\{j\mapsto F(\epsilon_j) Y_{R(j)} \in F(j), \quad \beta \mapsto \big[F(\beta)F(\epsilon_i) Y_{R(j)} \simeq F(\epsilon_{j'})F(LR(\beta))Y_{R(j)} \xrightarrow{F(\epsilon_{j'}) f_{L(\beta)}} F(\epsilon_{j'})Y_{L(i')}\big]\}.
		\] in $\laxlim F$.
	\end{remark}
	
	\section{Freely adding parametrized colimits and globalization}\label{sec:Glob}
	
	We fix an orbital pair $(T,S)$ for the remainder of the section. Note that there is an obvious forgetful functor \[\fgt\colon \PrL{T}{\L}\rightarrow \PrL{T}{S}\] which exhibits $\PrL{T}{\L}$ as a non-full subcategory of $\PrL{T}{S}$. In this section we will construct the \emph{S-relative cocompletion} $\Free(\CC)$ of an $S$-presentable $T$-category $\CC$, and exhibit $\Free$ as a left adjoint to $\fgt\colon \PrL{T}{\L}\rightarrow \PrL{T}{S}$.
	
	\begin{definition}
		We define a functor 
		\[\Free\colon \Cat_T \rightarrow \Fun(\bbF_T^{\op},\Cat)\] 
		via the assigment $\Free(\CC)_X = \laxlimdag (\pi_X^* \CC),$ where $\pi_X^* \CC$ denotes the functor 
		\[\pi_X^*\CC \colon ({\bbF_{T}}_{/X})^{\op}\xrightarrow{\pi_X} \bbF_T^{\op}\xrightarrow{\CC} \Cat\]
		and an edge in $({\bbF_{T}}_{/X})^{\op}$ is marked if and only if its projection to $\bbF_T^{\op}$ lands in $\bbF_S^{\op}$. The functoriality of this assignment in $\bbF_T^{\op}$ is induced by the contravariant functoriality of partially lax limits applied to the postcomposition functoriality of the slices ${\bbF_{T}}_{/X}$.
	\end{definition}
	
	\begin{remark}\label{rem:other_description}
		Note that there is a functor \[({\bbF_T}_{/-})^{\op}\colon \bbF_T \rightarrow \Cat^\dagger_{/\bbF_T^{\op}}, \quad X\mapsto ({\bbF_T}_{/X})^{\op},\] where $({\bbF_T}_{/X})^{\op}$ is marked by the subcategory of edges whose projection to $\bbF_T^{\op}$ lies in $\bbF_S^{\op}$. Then one can equivalently define $\Free(\CC)$ as the composite of $({\bbF_T}_{/-})^{\op}$ and the contravariant functor \[\Fun_{\bbF_T^{\op}}^{\dagger}(-,\Unco{\CC})\colon \Cat^\dagger_{/\bbF_T^{\op}}\rightarrow \Cat,\] where $\Unco{\CC}$ is marked as usual by the cocartesian edges living over edges of $\bbF_S^{\op}$.
	\end{remark}
	
	\begin{remark}
		Consider $X\in \bbF_T$ and note that ${\bbF_T}_{/X}$ is the finite coproduct completion of $T_{/X}$, the category of elements of $X\in \PSh(T)$. Therefore by \Cref{prop:lim_of_lim_ext} we obtain that
		\[
		\Free(\CC)_X \simeq \laxlimdag((T_{/X})^{\op}\rightarrow T^{\op}\xrightarrow{\CC} \Cat).
		\]
	\end{remark}
	
	We will make use of all three descriptions of $\Free(\CC)_{X}$ throughout this section. Let us begin by showing that $\Free$ restricts to a functor from $S$-presentable $T$-categories to $T$-presentable $T$-categories. 
	
	\begin{lemma}
		The functor $\Free$ factors through $\Cat_T\subset \Fun(\bbF_T^{\op},\Cat)$.
	\end{lemma}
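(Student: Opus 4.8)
The plan is to show directly that for every $\CC\in\Cat_T$ the functor $\Free(\CC)\colon\bbF_T^{\op}\to\Cat$ preserves finite products; since $\Cat_T=\Fun^\times(\bbF_T^{\op},\Cat)$ is by definition the full subcategory of finite-product-preserving functors, this is exactly the assertion of the lemma. Concretely, I must check that $\Free(\CC)_\emptyset\simeq\ast$ and that, for a coproduct $X=X_1\sqcup X_2$ in $\bbF_T$ with summand inclusions $\iota_i\colon X_i\to X$, the comparison functor $\Free(\CC)_X\to\Free(\CC)_{X_1}\times\Free(\CC)_{X_2}$ assembled from the $\iota_i^*$ is an equivalence.

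The key to making this painless is a change of model. Although the slice ${\bbF_T}_{/(X_1\sqcup X_2)}$ is the \emph{product} ${\bbF_T}_{/X_1}\times{\bbF_T}_{/X_2}$ (extensivity of $\bbF_T$), the category of elements $T_{/X}$ sends the same coproduct to a \emph{disjoint union} $T_{/X_1}\sqcup T_{/X_2}$ — colimits in $\PSh(T)$ being computed objectwise — and it is the second description I will exploit. So first I would invoke the remark identifying $\Free(\CC)_X$ with $\laxlimdag_{(T_{/X})^{\op}}(\CC\circ\pi_X)$ (an application of \Cref{prop:lim_of_lim_ext}, legitimate precisely because $\CC\in\Cat_T$ makes $\CC\circ\pi_X$ product preserving on $({\bbF_T}_{/X})^{\op}$) to identify $\Free(\CC)$ with the composite of the covariant functor $(T_{/-})^{\op}\colon\bbF_T\to\Cat^\dagger_{/T^{\op}}$, $X\mapsto(T_{/X})^{\op}$ — marked over $T^{\op}$ by the edges projecting into $S^{\op}$ — with the contravariant functor $\Fun^\dagger_{T^{\op}}(-,\Unco{\CC})\colon\Cat^\dagger_{/T^{\op}}\to\Cat$. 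Unwinding the construction, this identification is natural in $X$, the transition functor $\iota_i^*$ becoming restriction along the evident functor $T_{/X_i}\to T_{/X}$; the only genuine verification here is that, after the identification ${\bbF_T}_{/Y}=\bbF_{T_{/Y}}$, postcomposition ${\bbF_T}_{/X_i}\xrightarrow{\iota_i\circ-}{\bbF_T}_{/X}$ agrees with this functor, which one reads off on objects via $(c,\xi)\mapsto(h_c\xrightarrow{\iota_i\xi}X)$.

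Granting this reformulation, the lemma is immediate. The first functor $(T_{/-})^{\op}$ preserves coproducts: the category-of-elements functor $\PSh(T)\to\Cat$ preserves colimits, and the marking of $T_{/X_1}\sqcup T_{/X_2}$ restricts on each summand to that of $T_{/X_i}$ because $\pi_{X_1\sqcup X_2}$ restricts to $\pi_{X_i}$ there — so $X_1\sqcup X_2$ goes to the marked disjoint union and $\emptyset$ to the empty marked category. The second functor carries coproducts in $\Cat^\dagger_{/T^{\op}}$ to products in $\Cat$: a functor out of a disjoint union $\CA_1\sqcup\CA_2$ is the same datum as a pair of functors out of $\CA_1$ and $\CA_2$, and the conditions of lying over $T^{\op}$ and of preserving marked edges are imposed on the two summands separately; likewise the empty marked category yields the one-point category $\ast$. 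Composing, $\Free(\CC)$ sends coproducts in $\bbF_T$ to products in $\Cat$; tracing the equivalences back, the comparison functor $\Free(\CC)_X\to\prod_i\Free(\CC)_{X_i}$ is exactly the canonical restriction-to-summands equivalence $\Fun^\dagger_{T^{\op}}(\sqcup_i(T_{/X_i})^{\op},\Unco{\CC})\xrightarrow{\sim}\prod_i\Fun^\dagger_{T^{\op}}((T_{/X_i})^{\op},\Unco{\CC})$, and the empty case gives $\Free(\CC)_\emptyset\simeq\ast$. The only real work is the bookkeeping of the previous paragraph matching the functoriality of $\Free$ to restriction along $T_{/X'}\to T_{/X}$; there is no analytic content, and one could sidestep it entirely by simply adopting the composite above as the definition of $\Free$ and then checking, again via \Cref{prop:lim_of_lim_ext}, that it agrees with the one given.
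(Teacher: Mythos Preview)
Your proposal is correct and follows essentially the same route as the paper: both pass from ${\bbF_T}_{/X}$ to $T_{/X}$ via \Cref{prop:lim_of_lim_ext}, use that $T_{/\coprod X_i}\simeq\coprod T_{/X_i}$, and conclude that the partially lax limit over a disjoint union of marked indexing categories splits as a product. The paper compresses your final step into a single citation of \Cref{rem:lim_on_colim_diagram} (applied to a discrete, hence maximally marked, indexing set $\CI$), whereas you argue it directly from the behaviour of $\Fun^\dagger_{T^{\op}}(-,\Unco{\CC})$ on coproducts; the content is identical.
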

	
	\begin{proof}
		Note that ${T}_{/\coprod X_i} = \coprod {T}_{/X_i}$. Therefore \Cref{rem:lim_on_colim_diagram}, together with the previous remark, implies the desired result.
	\end{proof}	
	
	\begin{remark}
		Recall that $\Cat_T$ is canonically a 2-category via the equivalence $\Cat_T\simeq \Fun(T^{\op},\Cat)$. We observe that $\Free$ is a functor of 2-categories. This is easily seen from the description of \Cref{rem:other_description}.
	\end{remark}
	
	\begin{theorem}\label{thm:Free_is_presentable}
		$\Free$ restricts to a functor 
		\[\Free \colon \PrL{T}{S}\rightarrow \PrL{T}{\L}.\]
	\end{theorem}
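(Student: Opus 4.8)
The plan is to verify that $\Free(\CC)$ satisfies the three conditions of \Cref{def:S-presentable} for the pair $(T,T)$, which by \Cref{rem:PrL_the_same} is exactly what it means to lie in $\PrL{T}{\L}$, and that $\Free$ carries $S$-cocontinuous functors to $T$-cocontinuous ones. By the preceding lemma $\Free(\CC)$ is already a $T$-category, so we may work with the functor $\Free(\CC)\colon \bbF_T^{\op}\to\Cat$ whose structure functor along $g\colon X\to X'$ in $\bbF_T$ is restriction along $(g_!)^{\op}\colon ({\bbF_T}_{/X})^{\op}\to({\bbF_T}_{/X'})^{\op}$. The conceptual point is that, unlike the structure functors of $\CC$, those of $\Free(\CC)$ preserve limits --- this is precisely the feature the partially lax limit is designed to produce, and it lets us invoke the adjoint functor theorem to construct the ``new'' left adjoints.

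First I would establish fiberwise presentability. Since $\CC$ is fiberwise presentable, $\pi_X^*\CC$ takes values in $\PrLun$; in particular its values are cocomplete and all its transition functors preserve colimits, while the marked transitions $p^*$ (for $p\in\bbF_S$) additionally preserve limits, having a left adjoint by condition (2) of \Cref{def:S-presentable}. Hence \Cref{prop:colim_in_part_laxlim} and \Cref{prop:lim_in_part_laxlim} show that $\Free(\CC)_X$ admits all small limits and colimits, computed pointwise and preserved by the inclusion into the fully lax limit. That fully lax limit of a $\PrLun$-valued diagram over a small category is presentable (as one sees by rewriting it as an ordinary limit in $\PrLun$), and the displayed inclusion is closed under limits and sufficiently filtered colimits, so it exhibits $\Free(\CC)_X$ as a reflective presentable subcategory. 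Since limits and colimits in $\Free(\CC)_{X'}$ and $\Free(\CC)_X$ are both pointwise, restriction along $(g_!)^{\op}$ preserves all small limits and colimits; being colimit preserving between presentable categories it is accessible, so by the adjoint functor theorem $g^*$ admits a right adjoint $g_*$ and a left adjoint $g_!$, the former landing in $\laxlimdag$ since $g^*$ preserves colimits (alternatively one uses \Cref{prop:right_adj_to_res_in_part} and condition (3) for $\CC$). This gives conditions (1) and (2).

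For condition (3), by uniqueness of adjoints $g_!$ is the relative left Kan extension of sections along $(g_!)^{\op}$, in the sense of \cite{HTT}*{Definition 4.3.2.2}, assembled from the fibrewise colimits of \Cref{prop:colim_in_part_laxlim}. A pullback square in $\bbF_T$ induces a Beck--Chevalley square of (marked) slice categories intertwining the postcomposition and pullback functorialities --- here one uses both that $\bbF_T$ has pullbacks and that $\CC$ satisfies base change along morphisms of $\bbF_S$ --- and relative left Kan extension transports this to the left adjointability of the associated square of $\Free(\CC)$'s. For functoriality, let $F\colon \CC\to\CD$ be $S$-cocontinuous. Then $\Free(F)_X$ is postcomposition of sections with the partially lax transformation encoding the pointwise left adjoint $\pi_X^*F$, whose naturality squares over $\bbF_S$ are left adjointable, so by \Cref{prop:adj_part_lax_lim} --- or simply because $\Free(F)_X$ preserves pointwise colimits and is accessible --- it admits a right adjoint; and $\Free(F)$ commutes with every $g_!$ since these are built from fibrewise colimits and each $F_X$, being a left adjoint, preserves them. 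Hence $\Free(F)$ is $T$-cocontinuous and $\Free$ restricts to $\PrL{T}{S}\to\PrL{T}{\L}$.

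The main obstacle is twofold. First one needs the presentability (in particular accessibility) of the fully lax limit, without which the adjoint functor theorem is unavailable for producing the left adjoints $g_!$; this should follow by realizing the lax limit as a bona fide limit in $\PrLun$, but needs care. Second, and more substantively, is the Beck--Chevalley condition (3): one must show that the \emph{partial} base change enjoyed by $\CC$ propagates to \emph{full} base change for $\Free(\CC)$ and is preserved by $\Free$, which requires carefully tracking the interaction between the markings on the slice categories $({\bbF_T}_{/X})^{\op}$ and pullbacks in $\bbF_T$. Granting these two points, conditions (1) and (2) are a formal consequence of the limit-preservation of the structure functors together with the adjoint functor theorem.
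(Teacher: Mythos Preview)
Your overall architecture matches the paper's: establish fiberwise presentability via pointwise (co)limits, obtain both adjoints of $g^*$ by the adjoint functor theorem, and then verify base change. The presentability argument and the use of \Cref{prop:colim_in_part_laxlim} and \Cref{prop:lim_in_part_laxlim} are correct and essentially identical to the paper's.

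The genuine gap is in your treatment of condition~(3). You assert that ``by uniqueness of adjoints $g_!$ is the relative left Kan extension of sections along $(g_!)^{\op}$, assembled from the fibrewise colimits,'' and then plan to transport a Beck--Chevalley property of slice categories through this description. But relative left Kan extension along $(g_!)^{\op}$ need not land in the \emph{partially} lax limit: one would have to follow it by the reflector into $\laxlimdag$, and that reflector has no evident pointwise description. In fact the paper remarks, immediately after the proof of \Cref{thm:Free_colimit}, that the author does not know an explicit formula for $g_!$ when $g$ is a general map of $\bbF_T$; an explicit description (\Cref{const:left_adj_to_S}) is only available when $g\in\bbF_S$. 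So the step ``relative left Kan extension transports this to the left adjointability'' is not justified, and your functoriality argument (``$\Free(F)$ commutes with every $g_!$ since these are built from fibrewise colimits'') inherits the same problem.

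The paper's maneuver here is the key idea you are missing: rather than analysing $g_!$, it passes to total mates and verifies \emph{right} adjointability of the square involving $g_*$. The right adjoint $g_*$ \emph{does} admit an explicit formula, supplied by \Cref{prop:right_adj_to_res_in_part} applied to the marked adjunction $(f^*)^{\op}\dashv (f_!)^{\op}$ (with the left adjointability hypothesis coming from the $S$-base-change of $\CC$ along the projections $\pi_1$). Unwinding that formula, the relevant Beck--Chevalley map is induced by the base-change isomorphism $fh\times_Y Y'\simeq f'\circ(h\times_X X')$ in the slice, hence is an equivalence on the nose. Likewise, for functoriality the paper shows that the unstraightening of an $S$-cocontinuous $L$ admits a relative right adjoint preserving cocartesian edges over $\bbF_S$ (via \Cref{lem:adj_on_unstr}), so postcomposition with it gives a \emph{parametrized} right adjoint to $\Free(L)$, and one concludes by \Cref{rem:PrL_the_same} without ever touching the left adjoints $g_!$.
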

	
	\begin{proof}
		Let $\CC$ be an $S$-presentable $T$-category. We have to show that $\Free(\CC)$ is a $T$-presentable $T$-category. As a first step we show that $\Free(\CC)\colon \bbF_T^{\op}\rightarrow \Cat$ factors through $\PrL{}{\L}$. We first note that by an evident generalization of \cite{HTT}*{Proposition 5.5.3.17} each category $\Free(\CC)_X$ is presentable. By \Cref{prop:colim_in_part_laxlim} we conclude that colimits in $\laxlimdag \pi_X^* \CC$ are computed fiberwise for all $X\in \bbF_T$. In particular the restriction functors
		\[
		\laxlimdag \pi_X^*\CC\rightarrow \laxlimdag \pi_Y^*\CC
		\] clearly preserve colimits. By the adjoint functor theorem, proven as \cite{HTT}*{Corollary 5.5.2.9}, $f^*\colon \Free(\CC)_Y \rightarrow \Free(\CC)_X$ admits a right adjoint, and so $\Free(\CC)$ factors through $\PrL{}{\L}$.
		
		Next we show that the functors $f^*\colon \Free(\CC)_Y\rightarrow \Free(\CC)_X$ admit left adjoints for all morphisms $f\colon X\rightarrow Y$ in $\mathbb{F}_T$, and that the squares in \Cref{def:S-presentable}(3) are left adjointable. However by \Cref{prop:lim_in_part_laxlim} the functors $f^*$ also preserves limits for every $f\colon X\rightarrow Y$ in $\bbF_T$, and so another application of the adjoint functor theorem implies that it admits a left adjoint. Therefore all that remains is to prove that the required squares are left adjointable. To do this we will explicitly describe the right adjoint of $f^*$. First we observe that because $T$ is orbital, the functor
		\[(f_!)^{\op}\colon ({\bbF_{T}}_{/X})^{\op} \rightarrow ({\bbF_{T}}_{/Y})^{\op}\] has a left adjoint $(f^*)^{\op}$ given by pulling back, and that both $(f_!)^{\op}$ and $(f^*)^{\op}$ preserve marked edges. Furthermore by the pasting law for pullbacks the square
		\[
		\begin{tikzcd}
			{Z\times_{Y} X } & {Z} \\
			{Z' \times_{Y} X} & {Z'}
			\arrow["\pi_1", from=1-1, to=1-2]
			\arrow["g \times_{Y} X"', from=1-1, to=2-1]
			\arrow["\pi_1", from=2-1, to=2-2]
			\arrow["g", from=1-2, to=2-2]
		\end{tikzcd}
		\]
		is a pullback square in $\bbF_T$ for all $g\colon Z\rightarrow Z'$ in $\bbF_T$. This implies that the square
		\[
		\begin{tikzcd}
			{\CC_Z} & {\CC_{Z\times_{Y} X} } \\
			{\CC_{Z'}} & {\CC_{Z' \times_{Y} X}} 
			\arrow["(\pi_1)^*", from=1-1, to=1-2]
			\arrow["(g \times_{Y} X)^*", from=1-2, to=2-2]
			\arrow["(\pi_1)^*", from=2-1, to=2-2]
			\arrow["g^*"', from=1-1, to=2-1]
		\end{tikzcd}
		\]
		is left adjointable whenever $g$ is in $\bbF_S$ because $\CC$ is $S$-presentable. Therefore \Cref{prop:right_adj_to_res_in_part} gives an explicit description of the right adjoint $f_*$ of the restriction functor
		\[f^*\colon \laxlimdag (\pi_Y^* \CC ) \rightarrow \laxlimdag (\pi_X^* \CC).\] 
		Informally, $f_*$ sends the object
		\[\{h\colon Z\rightarrow X \mapsto C_{h} \in \CC_{Z},\quad [g\colon h \to h'] \mapsto \lambda_{g}\colon g^* C_{h'}\rightarrow C_{h}\}\] 
		to the object 
		\begin{align*}
			\hspace{-15.89pt}\{h\colon Z\rightarrow Y \mapsto &(\pi_1)_* C_{h\times_Y X} \in \CC_Z, \\
			&[g\colon h \rightarrow h'] \mapsto \big[ g^*(\pi_1)_* C_{h'\times_Y X} \xrightarrow{BC} (\pi_{1})_* (g\times_X Y)^* C_{h'\times_Y X} \xrightarrow{(\pi_1)_* \lambda_{g\times_X Y}} (\pi_1)_*Y_{h\times_X Y}\big]\}.
		\end{align*}
		We can now show that the required squares are left adjointable: Given a pullback square
		\[\begin{tikzcd}
			X' & X \\
			{Y'} & Y
			\arrow["{f'}"', from=1-1, to=2-1]
			\arrow["g", from=2-1, to=2-2]
			\arrow["f"', from=1-2, to=2-2]
			\arrow["{g'}", from=1-1, to=1-2]
		\end{tikzcd}\]
		in $\bbF_T$ it suffices by passing to total mates to prove that the Beck-Chevalley transformation filling the square
		\[\begin{tikzcd}
			{\Free(\CC)_{X'}} & {\Free(\CC)_{X}} \\
			{\Free(\CC)_{Y'}} & {\Free(\CC)_{Y}}
			\arrow["{(f')^*}", from=2-1, to=1-1]
			\arrow["{f^*}"', from=2-2, to=1-2]
			\arrow["{(g')_*}", from=1-1, to=1-2]
			\arrow["{g_*}", from=2-1, to=2-2]
			\arrow[shorten <=9pt, shorten >=9pt, Rightarrow, from=2-2, to=1-1]
		\end{tikzcd}\]
		is an equivalence. 
		However unwinding the definition of the Beck--Chevalley transformation we find that on a section $s\colon ({\bbF_T}_{/Y})^{\op}\rightarrow \Unco{\pi^*_Y\, \CC}$ it is given at $h\colon Z\rightarrow X$ by applying $(\pi_1)_*$ to the map $\beta^* (s(fh\times_{Y} Y'))\xrightarrow{\sim} s(f'\circ (h\times_X X'))$ induced by the base-change equivalence $\beta\colon fh\times_Y Y'\xrightarrow{\sim} f'\circ(h\times_X X')$, i.e.~ the morphism witnessing the equivalence of $fh\times_Y Y'$ and $f'\circ (h\times_X X')$ in the slice ${\bbF_T}_{/Y}$. In particular the Beck-Chevalley transformation is an equivalence. Altogether we have shown that $\Free(\CC)$ is an object of $\PrL{T}{\L}$.
		
		Next we will show that $\Free$ sends $S$-cocontinuous functors to $T$-cocontinuous functors. To this end fix a functor $L\colon \CC\rightarrow \CD$ in $\PrL{T}{S}$ and write $\mathbb{L}\colon \Unco{\CC}\rightarrow \Unco{\CD}$ for its unstraightening. Because the naturality squares in $L$ are left adjointable for maps in $\bbF_S$, \Cref{prop:adj_part_lax_lim} implies that $\mathbb{L}$ admits a right adjoint $\mathbb{R}\colon \Unco{\CD}\rightarrow \Unco{\CC}$ in $\Cat_{/\CI}$ which preserves cocartesian edges over $\bbF_S$. Now consider the description of $\Free(-)$ from \Cref{rem:other_description}. From this it is clear that postcomposition $\mathbb{R}$ gives a $T$-functor $\Free(\CD)\rightarrow \Free(\CC)$ which is right adjoint to $\Free(L)$. Therefore we conclude that $\Free(L)$ is a $T$-cocontinuous functor, see~\Cref{rem:PrL_the_same}. In total we have shown that $\Free\colon \PrL{T}{S}\rightarrow \Cat_T$ restricts to the subcategory $\PrL{T}{\L}$.
	\end{proof}
	
	Having shown that $\Free$ restricts appropriately, we now turn to showing that it is left adjoint to the forgetful functor $\mathrm{fgt}\colon \PrL{T}{S}\rightarrow \PrL{T}{\L}$. To do this we define the unit and counit of the putative adjunction.
	
	\begin{construction}
		Let $\CC\in \PrL{T}{S}$ be an $S$-presentable $T$-category. We define a $T$-functor
		\[I\colon \CC\rightarrow \Free(\CC)\] as follows. First observe that because each category $({\mathbb{F}_T}_{/X})^{\op}$ admits a final object we obtain a natural equivalence $\lim \pi_X^* \CC \simeq \CC_X$, given by evaluating at the final object. After identifying these two categories, we claim that including the limit into the partially lax limit $\lim \pi_X^* \CC\rightarrow \laxlimdag \pi_X^*\CC$ gives a natural $S$-cocontinuous $T$-functor $I\colon \CC\rightarrow \Free(\CC)$. 
		
		To see that $I$ is in fact $S$-cocontinuous we first note that by \Cref{prop:left_adj_to_res_in_lim}, each functor $I_X$ admits a right adjoint given by evaluating an object $s\colon {\mathbb{F}_T}_{/X}\rightarrow \Unco{\pi_X^* \CC}$ of $\laxlimdag \pi_X^* \CC$ at the object ${\id_X\colon X\rightarrow X}$ in ${\mathbb{F}_T}_{/X}$. Next we consider the left adjointability of naturality squares for maps in $\bbF_S$. By passing to total mates, it suffices to show that given any map $f\colon X\rightarrow Y$ in $\bbF_T$, the Beck--Chevalley transformation filling the square
		\[\begin{tikzcd}
			{\Free(\CC)_{X}} & {\CC_X} \\
			{\Free(\CC)_Y} & {\CC_Y}
			\arrow["{\alpha^*}", from=2-1, to=1-1]
			\arrow["{\alpha^*}"', from=2-2, to=1-2]
			\arrow["{\ev_{\id_Y}}"', from=2-1, to=2-2]
			\arrow["{\ev_{\id_X}}", from=1-1, to=1-2]
			\arrow[shorten <=11pt, shorten >=7pt, Rightarrow, from=2-2, to=1-1]
		\end{tikzcd}\] is an equivalence. One can compute that it is given at $s\colon {\bbF_{T}}_{/Y}\rightarrow \Unco{F}$ by the lax structure map $s_\alpha\colon \alpha^* s(\id) \rightarrow s(\alpha)$. Because the objects of $\Free(\CC)_X$ are strict on $\bbF_S\subset \bbF_T$, we conclude that this is an equivalence whenever $\alpha$ is a map in $\bbF_S$. We conclude that $I$ is a morphism in $\PrL{T}{S}$.
	\end{construction}
	
	\begin{construction}
		Next we construct the counit of the desired adjunction. Given an object $\CC$ in $\PrL{T}{\L}$ we have to construct a functor 
		\[L \colon \Free(\CC)\rightarrow \CC\] in $\PrL{T}{\L}$. We will do this by showing that the functor $I\colon\CC\rightarrow \Free(\CC)$ has a parametrized left adjoint when $\CC$ is $T$-presentable. To do this we first observe that by \Cref{prop:lim_in_part_laxlim}, $I_X$ preserves all limits. Therefore each functor $I_X$ has a left adjoint $L_X$ by another application of the adjoint functor theorem.
		
		To show that $I$ in fact has a left adjoint as a $T$-functor, it now suffices by \cite{MW21}*{Lemma 3.2.7} to show that the Beck--Chevalley transformation
		\[\begin{tikzcd}
			{\CC_X} & {\Free(\CC)_X} \\
			{\CC_Y} & {\Free(\CC)_Y}
			\arrow["{f_*}", from=1-2, to=2-2]
			\arrow["{I_X}", from=1-1, to=1-2]
			\arrow["{I_Y}"', from=2-1, to=2-2]
			\arrow["{f_*}"', from=1-1, to=2-1]
			\arrow[shorten <=11pt, shorten >=9pt, Rightarrow, from=2-1, to=1-2]
		\end{tikzcd}\]
		filling the square above is an equivalence for all $f\colon X\rightarrow Y$ in $\mathbb{F}_T$. Unwinding the definition of the Beck--Chevalley transformation, we find that it is given at $h\colon Z\rightarrow Y$ by the Beck--Chevalley transformation filling the square
		\[\begin{tikzcd}
			{\CC_X} &{\CC_{X\times_Y Z}} \\
			{\CC_Y} & {\CC_Z} 
			\arrow["{h^*}"', from=2-1, to=2-2]
			\arrow["{(\pi_1)_*}", from=1-2, to=2-2]
			\arrow["{(h\times_Y X)^*}", from=1-1, to=1-2]
			\arrow["{f_*}"', from=1-1, to=2-1]
			\arrow[shorten <=9pt, shorten >=9pt, Rightarrow, from=2-1, to=1-2]
		\end{tikzcd}\]
		This is an equivalence because $\CC$ is an object of $\PrL{T}{\L}$ and so satisfies base-change with respect to all pullback squares in $\bbF_T$. In total we conclude that $L$ is a functor in $\PrL{T}{\L}$.
	\end{construction}
	
	We will prove that $\Free$ is a left adjoint by showing that both triangles identities hold for the putative unit and counit. To do this it will be useful to have a different description of the composite $\Free\Free(\CC)$.
	
	\begin{lemma}\label{lem:iterated_free}
	There exists an equivalence 
	\[\Free(\Free(\CC))_X \simeq \laxlimdag_{\Ar({\bbF_T}_{/X})^{\op}} \pi^*_X\CC \circ s,\] 
	where $s\colon \Ar({\bbF_T}_{/X})^{\op}\rightarrow ({\bbF_T}_{/X})^{\op}$ is the source projection and $\Ar({\bbF_T}_{/X})^{\op}$ is marked by those natural transformations for which both maps are in $\bbF_S$. Moreover this equivalence is natural in both $\CC$ and $X$.
	\end{lemma}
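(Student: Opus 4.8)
The plan is to unwind $\Free(\Free(\CC))_X$ into an iterated partially lax limit and collapse it to a single one over the opposite arrow category, using the Fubini-type statement \Cref{prop:lim_on_colim_diagram}. By construction $\Free(\Free(\CC))_X \simeq \laxlimdag_{({\bbF_T}_{/X})^{\op}} \pi_X^*\Free(\CC)$, and the functor $\pi_X^*\Free(\CC)\colon({\bbF_T}_{/X})^{\op}\to\Cat$ sends an object $h\colon Z\to X$ to $\Free(\CC)_Z\simeq\laxlimdag_{({\bbF_T}_{/Z})^{\op}}\pi_Z^*\CC$, and a morphism, given by a map $g\colon Z'\to Z$ over $X$, to the restriction functor $g^*$ of $\Free(\CC)$ --- which by construction is obtained from the postcomposition functoriality of the slices via the contravariant functoriality of partially lax limits. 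Thus, setting $\CI:=({\bbF_T}_{/X})^{\op}$ with its marking by $\bbF_S$, $\CJ_h:=({\bbF_T}_{/Z})^{\op}$ (again marked by $\bbF_S$) for $h\colon Z\to X$, and $F_h:=\pi_Z^*\CC$, the category $\Free(\Free(\CC))_X$ is the iterated partially lax limit $\laxlimdag_{\CI}\bigl(h\mapsto\laxlimdag_{\CJ_h}F_h\bigr)$ appearing on the right-hand side of \Cref{prop:lim_on_colim_diagram}.

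Next I would identify the Grothendieck construction. Since $(T,S)$ is an orbital pair, $\bbF_T$ and hence every slice ${\bbF_T}_{/X}$ has all pullbacks, so the target evaluation $\ev_1\colon\Ar({\bbF_T}_{/X})\to{\bbF_T}_{/X}$ is at once a cocartesian and a cartesian fibration; passing to opposites, $\Ar({\bbF_T}_{/X})^{\op}\to({\bbF_T}_{/X})^{\op}$ is again a cocartesian fibration, with fiber over $h\colon Z\to X$ equal to $({\bbF_T}_{/Z})^{\op}=\CJ_h$ and cocartesian transitions given by pullback of slices. I would check that this exhibits $\Ar({\bbF_T}_{/X})^{\op}$ as the cocartesian unstraightening of a marked diagram $\CJ_\bullet\colon({\bbF_T}_{/X})^{\op}\to\Cat^\dagger$ restricting to $h\mapsto\CJ_h$; that the marking it induces --- cocartesian edges over $\bbF_S$ together with the fiberwise markings --- coincides with the one in the statement, namely the commutative squares both of whose legs lie in $\bbF_S$ (here one uses that $\bbF_S$ is pullback-stable and closed under the relevant cancellation, by orbitality of $S$); and that the cocone $\{F_h\}$ assembles into the functor $F:=\pi_X^*\CC\circ s$, where $s\colon\Ar({\bbF_T}_{/X})^{\op}\to({\bbF_T}_{/X})^{\op}$ is the source projection --- indeed on the fiber $\CJ_h$ the source projection carries $(W\to Z)$ to $(W\to X)$, so that $F|_{\CJ_h}=\pi_Z^*\CC=F_h$. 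With these identifications, \Cref{prop:lim_on_colim_diagram} yields
\[
\laxlimdag_{\Ar({\bbF_T}_{/X})^{\op}}\pi_X^*\CC\circ s \;\simeq\; \laxlimdag_{({\bbF_T}_{/X})^{\op}}\Bigl(h\mapsto\laxlimdag_{({\bbF_T}_{/Z})^{\op}}\pi_Z^*\CC\Bigr)\;\simeq\;\Free(\Free(\CC))_X.
\]

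For naturality, I would note that $\CC\mapsto\pi_X^*\CC\circ s$ is functorial and the equivalence of \Cref{prop:lim_on_colim_diagram} is natural in the input diagram, giving naturality in $\CC$; while in $X$ one observes, exactly as in \Cref{rem:other_description}, that $X\mapsto\bigl(\Ar({\bbF_T}_{/X})^{\op}\xrightarrow{\ \pi_X^{\op}\circ s\ }\bbF_T^{\op}\bigr)$ is a functor $\bbF_T\to\Cat^\dagger_{/\bbF_T^{\op}}$ via postcomposition and that $\laxlimdag_{\Ar({\bbF_T}_{/X})^{\op}}\pi_X^*\CC\circ s = \Fun^\dagger_{\bbF_T^{\op}}\bigl(\Ar({\bbF_T}_{/X})^{\op},\Unco{\CC}\bigr)$, so that both sides of the equivalence arise by applying the contravariant functor $\Fun^\dagger_{\bbF_T^{\op}}(-,\Unco{\CC})$ to maps of marked categories over $\bbF_T^{\op}$.

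The step I expect to be genuinely delicate is the middle one: besides keeping the various opposites and the two markings straight, one must verify that the transition functors of the iterated limit produced by \Cref{prop:lim_on_colim_diagram} --- built from the cocartesian, i.e.\ pullback-type, structure of $\Ar({\bbF_T}_{/X})^{\op}$ --- agree with the restriction functors of $\pi_X^*\Free(\CC)$, which are built instead from the postcomposition-type functoriality of the slices. Reconciling these amounts to a Beck--Chevalley comparison between the two functorialities of the slices, in the spirit of \Cref{prop:right_adj_to_res_in_part}, where the base-change properties of the $S$-presentable category $\CC$ are used; this, rather than the formal collapse of the iterated limit, is the heart of the argument.
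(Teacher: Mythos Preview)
Your overall strategy---unwind $\Free(\Free(\CC))_X$ as an iterated partially lax limit, identify the Grothendieck construction with $\Ar({\bbF_T}_{/X})^{\op}$, and collapse via \Cref{prop:lim_on_colim_diagram}---is exactly the paper's (one--line) proof, and your first two paragraphs are a faithful expansion of what the paper leaves implicit.

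The problem is your final paragraph. You locate the ``heart of the argument'' in a Beck--Chevalley comparison that invokes the base--change properties of an $S$-presentable $\CC$. But the lemma is stated and used for an arbitrary $T$-category: $\Free$ is defined on all of $\Cat_T$, and nothing in the statement or its application in \Cref{thm:Free_colimit} uses that $\CC$ lies in $\PrL{T}{S}$. So whatever resolves the transition--matching issue, it cannot be a property of $\CC$. Concretely, the outer lax structure on the iterated limit is encoded not by the \emph{cocartesian} (pullback--type) edges of $\ev_1^{\op}$ but by the edges coming from the always--present cocartesian structure on $\ev_1\colon\Ar(\CA)\to\CA$, namely the squares with identity on the source component; under $(-)^{\op}$ these become the cartesian edges of $\ev_1^{\op}$, and they implement exactly the postcomposition functoriality of slices used to define $\Free(\CC)$. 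Once you factor a general morphism of $\Ar(\CA)^{\op}$ as ``fiber for $\ev_1^{\op}$'' followed by ``cartesian for $\ev_1^{\op}$'', the two descriptions line up on the nose---no Beck--Chevalley map and no hypothesis on $\CC$ is required. (This also means your appeal to a cancellation property of $\bbF_S$ for the marking is unnecessary: with this factorization the marking ``both legs in $\bbF_S$'' is immediate.) In short: right diagnosis that something needs checking, wrong cure.
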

	
	\begin{proof}
	This result follows immediately from \Cref{prop:lim_on_colim_diagram}, combined with the well-known computation that $\Ar({\bbF_T}_{/X})^{\op} \rightarrow ({\bbF_T}_{/X})^{\op}$ is equivalent to the cocartesian unstraightening of the slice functor $({\bbF_T}_{/X})_{/\bullet}^{\op}$.
	\end{proof}
	
	\begin{theorem}\label{thm:Free_colimit}
	The functor $\Free\colon \PrL{T}{S}\rightarrow \PrL{T}{\L}$ is left adjoint to $\mathrm{fgt}\colon  \PrL{T}{\L}\rightarrow \PrL{T}{S}$.
	\end{theorem}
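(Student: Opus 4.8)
The plan is to carry out the strategy announced before the statement: show that the unit $I$ and counit $L$ already constructed satisfy the two triangle identities, so that, together with the evident naturality of $I$ and the naturality of $L$ (which follows from uniqueness of parametrized adjoints), they exhibit $(\Free,\fgt)$ as an adjunction. The single structural input used throughout is that the unit $I_\CC\colon\CC\to\fgt\Free(\CC)$ is fiberwise fully faithful: at each $X\in\bbF_T$ it is the inclusion of the limit $\lim\pi_X^*\CC\simeq\CC_X$ into the partially lax limit $\Free(\CC)_X$, and evaluation at $\id_X$ provides a retraction; in particular $I_\CC$ is fully faithful as a $T$-functor.

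First I would handle the triangle identity involving $\fgt$. For $\CD\in\PrL{T}{\L}$ it asserts that $L_\CD\circ I_\CD\simeq\id_\CD$ as $T$-functors. Since by construction $L_\CD$ is the parametrized left adjoint of the unit $I_\CD$, this composite is the counit of that adjunction; and the counit of an adjunction whose right adjoint is fully faithful is an equivalence. As $I_\CD$ is fully faithful, the identity follows.

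The substantial point is the triangle identity involving $\Free$: for $\CC\in\PrL{T}{S}$ I must show that $L_{\Free(\CC)}\circ\Free(I_\CC)\simeq\id_{\Free(\CC)}$, where $L_{\Free(\CC)}$ is the parametrized left adjoint of the unit $I_{\Free(\CC)}\colon\Free(\CC)\to\Free\Free(\CC)$. Both sides are $T$-cocontinuous functors between $T$-presentable $T$-categories, so it is equivalent to verify the identity after passing to parametrized right adjoints, an operation which is conservative. There $L_{\Free(\CC)}$ becomes $I_{\Free(\CC)}$, while by the last step of the proof of \Cref{thm:Free_is_presentable} the right adjoint of $\Free(I_\CC)$ is given by postcomposing sections with the fiberwise right adjoint of the unstraightening of $I_\CC$, which over each $Z$ is the evaluation functor $\ev_{\id_Z}\colon\Free(\CC)_Z\to\CC_Z$. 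So it remains to check that this postcomposition inverts $I_{\Free(\CC)}$. Unwinding the construction of the unit, $I_{\Free(\CC)}$ sends a section $\tau\in\Free(\CC)_X$ to the section assigning to $h\colon Z\to X$ the object $h^*\tau\in\Free(\CC)_Z$, where $h^*$ is the restriction functor of $\Free(\CC)$, i.e.\ precomposition of sections with $h_!\colon{\bbF_T}_{/Z}\to{\bbF_T}_{/X}$. Hence the composite sends $\tau$ to the section $h\mapsto\ev_{\id_Z}(h^*\tau)=\tau(h_!(\id_Z))=\tau(Z\xrightarrow{h}X)$, which is $\tau$ again, as desired. (Alternatively, one can argue directly, without passing to right adjoints, using \Cref{lem:iterated_free} to model $\Free\Free(\CC)_X$ as $\laxlimdag_{\Ar({\bbF_T}_{/X})^{\op}}\pi_X^*\CC\circ s$ and identifying $\Free(I_\CC)$ and $L_{\Free(\CC)}$ with restriction along the source projection $s$ and along its section given by the identity arrows.)

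The real difficulty here is not any individual step but the coherence bookkeeping: one has to track the markings, the lax structure maps of the sections, and naturality in $X$ throughout the identification of $\Free\Free(\CC)$ and of the two composites above — which is exactly what \Cref{lem:iterated_free}, the naturality clauses in the propositions of \Cref{subsec:adj_of_part}, and \Cref{rem:other_description} are there to supply. Once both triangle identities are established, $\Free\colon\PrL{T}{S}\to\PrL{T}{\L}$ is left adjoint to $\fgt\colon\PrL{T}{\L}\to\PrL{T}{S}$, proving the theorem.
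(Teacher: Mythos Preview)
Your proposal is correct and follows essentially the same route as the paper: both establish the first triangle identity from full faithfulness of $I$, and for the second pass to right adjoints and reduce to checking that the composite $\Free(\ev_{\id})\circ I_{\Free(\CC)}$ is the identity. The only cosmetic difference is that the paper makes \Cref{lem:iterated_free} the primary tool---identifying $\Free\Free(\CC)_X$ with a partially lax limit over $\Ar({\bbF_T}_{/X})^{\op}$ so that the two functors become restriction along $s$ and along the identity section $c$, whence $s\circ c=\id$---whereas you give the direct pointwise unwinding first and mention the $\Ar$ description as an alternative; these are the same computation packaged two ways.
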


	\begin{proof}
	We show that $I$ and $L$ satisfy the triangle identities, and so are a unit and counit exhibiting $\Free\dashv \mathrm{fgt}$ as an adjunction. First we consider the composite 
	\[\CC\xrightarrow{I} \Free(\CC) \xrightarrow{L} \CC.\] Recall that $I_X$ is a fully faithful right adjoint to $L_X$. Therefore the counit gives a natural equivalence from the composite to the identity.
	For the other triangle identity we consider the composite
	\[\Free(\CC)\xrightarrow{\Free(I)} \Free(\Free(\CC)) \xrightarrow{L} \Free(\CC).\] We may equivalently show that the composite
	\[\Free(\CC)_X\xleftarrow{\Free(\ev_{\id})} \Free(\Free(\CC))_X \xleftarrow{I} \Free(\CC)_X\] given by passing to right adjoints is naturally homotopic to the identity. At this point we apply \Cref{lem:iterated_free} to rewrite $\Free(\Free(\CC))_X$ as the partially lax limit of $\pi^*_X\CC \circ s$ over $\Ar({\bbF_T}_{/X}$. One can easily show that under this identification $I$ and $\Free(\ev_{\id})$ are given by restricting along the source projection $s\colon \Ar({\bbF_T}_{/X})^{\op}\rightarrow ({\bbF_T}_{/X})^{\op}$ and the identity section $c\colon ({\bbF_T}_{/X})^{\op}\rightarrow \Ar({\bbF_T}_{/X})^{\op}$ respectively. Therefore there is a natural equivalence between the composite $I\circ\Free(\ev_{\id})$ and restriction along $s\circ c = \id$.
	\end{proof}
	
	As an example of the process of freely adding colimits we obtain the following result:
	
	\begin{corollary}\label{cor:Glob_P_spc}
		$\Free(\Spc^S_\bullet)\simeq \Spc^T_\bullet$.
	\end{corollary}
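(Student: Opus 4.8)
The plan is to derive the equivalence formally, using only the adjunction $\Free\dashv\fgt$ established in \Cref{thm:Free_colimit} together with the two universal properties recorded in \Cref{ex:S-spaces}: that $\Spc^S_\bullet$ is the free $S$-presentable $T$-category on a point, and that $\Spc^T_\bullet$ is the free $T$-presentable $T$-category on a point. Concretely, I would argue that both $\Free(\Spc^S_\bullet)$ and $\Spc^T_\bullet$ corepresent the same functor $\core(\Gamma(-))\colon\PrL{T}{\L}\to\Spc$, and then conclude by the Yoneda lemma.

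First I would observe that the (non-full) inclusion $\fgt\colon\PrL{T}{\L}\to\PrL{T}{S}$ leaves the underlying functor $\bbF_T^{\op}\to\Cat$ of a $T$-category unchanged, and likewise on morphisms; hence $\Gamma\circ\fgt=\Gamma$ as functors $\PrL{T}{\L}\to\Cat$, and so $\core(\Gamma\fgt(-))\simeq\core(\Gamma(-))$ as functors $\PrL{T}{\L}\to\Spc$. Next, for an arbitrary $\CD\in\PrL{T}{\L}$ I would chain the natural equivalences
\begin{align*}
\Hom_{\PrL{T}{\L}}(\Free(\Spc^S_\bullet),\CD) &\simeq \Hom_{\PrL{T}{S}}(\Spc^S_\bullet,\fgt\CD) \simeq \core(\Gamma\fgt\CD) \\
&\simeq \core(\Gamma\CD) \simeq \Hom_{\PrL{T}{\L}}(\Spc^T_\bullet,\CD),
\end{align*}
where the first equivalence is the adjunction of \Cref{thm:Free_colimit}, the second is the universal property of \Cref{ex:S-spaces} for the pair $(T,S)$, the third is the identification $\Gamma\fgt=\Gamma$, and the fourth is the universal property of \Cref{ex:S-spaces} applied to $(T,T)$ (using $\PrL{T}{T}=\PrL{T}{\L}$ from \Cref{rem:PrL_the_same}). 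Since each step is natural in $\CD$, the Yoneda lemma yields an equivalence $\Free(\Spc^S_\bullet)\simeq\Spc^T_\bullet$ in $\PrL{T}{\L}$, hence in $\Cat_T$. Note that $\Free(\Spc^S_\bullet)$ really does land in $\PrL{T}{\L}$ by \Cref{thm:Free_is_presentable}, so this comparison takes place in the correct category.

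I do not expect any serious obstacle: the statement is a soft consequence of the adjunction and the universal properties already in hand. The only points requiring (minor) care are the verification that $\Gamma\circ\fgt\simeq\Gamma$ and that the universal-property equivalences of \Cref{ex:S-spaces} are natural in their argument, both of which are needed for the Yoneda step. Alternatively, one could prove the corollary ``by hand'' by unwinding $\Free(\Spc^S_\bullet)_X\simeq\laxlimdag_{(T_{/X})^{\op}}\pi_X^*\Spc^S_\bullet$ and matching it with $\Spc^T_X=\PSh(T)_{/X}$ via straightening of the relevant cocartesian fibration, but this is strictly more laborious and the representability argument above is cleaner.
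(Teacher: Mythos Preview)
Your proposal is correct and is essentially the paper's own proof spelled out in more detail: the paper simply says that both $\Free(\Spc^S_\bullet)$ and $\Spc^T_\bullet$ corepresent $\CC\mapsto\core(\Gamma\CC)$ by \Cref{ex:S-spaces}, which is exactly the chain of equivalences you write down using the adjunction of \Cref{thm:Free_colimit}.
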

	
	\begin{proof}
		This follows immediately by comparing universal properties: by \Cref{ex:S-spaces} both $\Free(\Spc^S_\bullet)$ and $\Spc^T_\bullet$ represent the functor $\CC \mapsto \core(\Gamma\CC)$.
	\end{proof}
	
	We note that the proof of the previous theorem did not require any knowledge about the left adjoint of $f^*\colon \Free(\CC)_Y \rightarrow \Free(\CC)_X$, beyond their existence. In fact the author does not know a general explicit description of the left adjoint of $f^*\colon \Free(\CC)_Y\rightarrow \Free(\CC)_X$ for an arbitrary map $f\colon X\rightarrow Y$. Nevertheless, we now show that when $f$ is in $\bbF_S$ such a description is in fact possible. Therefore, rather fittingly, it is only the left adjoints which we have freely adjoined which will remain mysterious. For this we require the concept of marked finality.
	
	\begin{definition}
		A functor $F\colon\CI\rightarrow \CJ$ of marked categories is marked final if for every functor $G\colon \CJ\rightarrow \Cat$, restriction along $F$ induces an equivalence
		\[
		\laxlimdag_{\CJ} G \rightarrow \laxlimdag_{\CI} GF.
		\]
	\end{definition}
	
	The following criteria allows us to recognize marked final functors. Before stating it we recall some notation. Given a functor $F\colon \CI\rightarrow \CJ$ and an object $j\in \CJ$, we write $\CI_{/j}$ for the comma category $F\downarrow \{j\}$. If $\CJ$ is marked, then we enhance this to a marked category by marking all the edges whose projection to $\CJ$ is marked. Furthermore given a marked category $\CJ$ we write $\mathcal{L}(\CJ)$ for the (Dwyer--Kan) localization of $\CJ$ at the marked edges.
	
	\begin{proposition}[\cite{AG20}*{Proposition 5.6$^{\op}$}]\label{prop:marked_finality}
		$F\colon \CI\rightarrow \CJ$ is marked final if and only if for all $j\in \CJ$ the canonical functor $F\colon \CI_{/j}\rightarrow \CJ_{/j}$ induces an equivalence
		\[
		\CL(\CI_{/j})\xrightarrow{\sim} \CL(\CJ_{/j})
		\]
		after localization.\qed
	\end{proposition}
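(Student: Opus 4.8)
This is essentially \cite{AG20}*{Proposition 5.6$^{\op}$}, stated there in the dual language of marked colimits; the argument I would give reduces the defining condition of marked finality to a single family of test diagrams. First I would note that $\laxlimdag_{(\CJ,\CW)}(-)\colon\Fun(\CJ,\Cat)\to\Cat$ preserves limits: writing $\laxlim_\CJ G=\Fun_\CJ(\CJ,\Unco{G})$, the formation of sections preserves limits, and the full subcategory $\laxlimdag_{(\CJ,\CW)}G\subseteq\laxlim_\CJ G$ is cut out by the requirement that the morphisms lying over $\CW$ be cocartesian, which is again limit-stable because cocartesian edges in a limit of cocartesian fibrations are detected componentwise. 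Moreover $\Fun(\CJ,\Cat)$ is generated under limits by the functors $j_*\CE$ with $j\in\CJ$ and $\CE\in\Cat$, where $j_*$ denotes right Kan extension along $j\colon\{\ast\}\to\CJ$ (indeed $\prod_j j_*$ is right adjoint to the conservative, limit-preserving functor $\prod_j\ev_j$). Hence $F$ is marked final as soon as $F^*\colon\laxlimdag_{(\CJ,\CW)}(j_*\CE)\to\laxlimdag_{(\CI,\CW_\CI)}(j_*\CE\circ F)$ is an equivalence for all such $j$ and $\CE$, where $\CW_\CI$ denotes the marking of $\CI$.

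The technical heart is then the identification, natural in $F$ and $\CE$,
\[
\laxlimdag_{(\CJ,\CW)}(j_*\CE)\simeq\Fun\bigl(\CL(\CJ_{/j}),\CE\bigr),\qquad\laxlimdag_{(\CI,\CW_\CI)}(j_*\CE\circ F)\simeq\Fun\bigl(\CL(\CI_{/j}),\CE\bigr),
\]
under which $F^*$ becomes restriction along the functor $\CL(\CI_{/j})\to\CL(\CJ_{/j})$ induced by $F\colon\CI_{/j}=F\downarrow\{j\}\to\CJ_{/j}$; here $\CJ_{/j}$ and $\CI_{/j}$ carry the markings by the edges whose projection to $\CJ$ lies in $\CW$. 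For the first equivalence I would use the explicit model of the cocartesian fibration $\Unco{j_*\CE}\to\CJ$ classifying $j_*\CE$: its fiber over $m$ is $\CE^{\Map_\CJ(m,j)}$, and a morphism over $g\colon m'\to m$ between objects $\phi_{m'}$, $\phi_m$ of the respective fibers is, after factoring into a cocartesian arrow followed by a fiberwise one as in \Cref{rem:sections_unwind}, a natural transformation from the restriction of $\phi_{m'}$ along $g$ to $\phi_m$. Unwinding this, a section of $\Unco{j_*\CE}$ over $\CJ$ amounts to the assignment $(m,\alpha\colon m\to j)\mapsto\phi_m(\alpha)\in\CE$ on objects of $\CJ_{/j}$ together with the structure maps on morphisms, which yields an equivalence between such sections and functors $\CJ_{/j}\to\CE$ and restricts to one between sections carrying edges over $\CW$ to cocartesian edges and functors inverting the marked edges of $\CJ_{/j}$, i.e.\ functors out of $\CL(\CJ_{/j})$. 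The second equivalence and the identification of $F^*$ follow from the same argument over $\CI$, using that the cocartesian fibration classifying $j_*\CE\circ F$ is the pullback of $\Unco{j_*\CE}$ along $F$, which carries $F\downarrow\{j\}$ to $\CJ_{/j}$.

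Granting this, $F$ is marked final if and only if $\Fun(\CL(\CJ_{/j}),\CE)\to\Fun(\CL(\CI_{/j}),\CE)$ is an equivalence for all $j\in\CJ$ and all $\CE\in\Cat$, which by the Yoneda lemma is equivalent to $\CL(\CI_{/j})\to\CL(\CJ_{/j})$ being an equivalence of $\infty$-categories for every $j$; this is the asserted criterion. I expect the main obstacle to lie in the computation of the preceding paragraph: one must set up the model of $\Unco{j_*\CE}$ with care and carefully match the condition that a section send marked edges to cocartesian edges against the condition that the associated functor $\CJ_{/j}\to\CE$ invert the marked edges, keeping track of exactly which edges of the comma categories are marked and verifying naturality in both $F$ and $\CE$; the remaining steps are formal.
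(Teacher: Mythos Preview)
The paper does not supply its own proof of this statement; it is recorded with a \qed and attributed entirely to \cite{AG20}. Your proposal is a correct self-contained argument. The reduction to the test diagrams $j_*\CE$ via limit-preservation of $\laxlimdag_{(\CJ,\CW)}(-)$ and comonadicity of $\prod_j\ev_j$ is sound (you are implicitly using that $\CJ$ is small, which holds in every application in the paper), and the identification $\laxlimdag_{(\CJ,\CW)}(j_*\CE)\simeq\Fun(\CL(\CJ_{/j}),\CE)$ together with its naturality in $F$ is correct for the reasons you sketch; you are also right that making this identification fully precise and checking the match between the cocartesian-over-$\CW$ condition and the marking on $\CJ_{/j}$ is the only genuinely nonformal step. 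Since the paper simply imports the result from the reference, there is no in-paper approach to compare against, but your argument is a clean conceptual route to the criterion that avoids any model-categorical input.
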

	
	\begin{proposition}\label{prop:marked_adj}
		Let $\CI$ and $\CJ$ be marked categories. Suppose $L\colon \CI\rightleftarrows \CJ\cocolon R$ is an adjoint pair such that both $L$ and $R$ preserve the marking. Then the following are equivalent:
		\begin{enumerate}
			\item The unit $\eta\colon i \rightarrow RL(i)$ is marked for all $i\in \CI$;
			\item The counit $\epsilon \colon LR(j)\rightarrow j$ is marked for all $j\in \CJ$;
			\item The adjunction equivalence
			\[
			\Hom_{\CI}(L(i),j) \simeq \Hom_{\CJ}(i,R(j))
			\]
			preserves marked morphisms.
		\end{enumerate}
	\end{proposition}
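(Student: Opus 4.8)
The plan is to work directly with the unit--counit description of the adjunction equivalence and to route the implications through condition~(3): I will establish $(3)\Leftrightarrow[(1)\text{ and }(2)]$ by a formal argument, and then the equivalence $(1)\Leftrightarrow(2)$, which I expect to be where the real content lies.

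Recall first that under the natural equivalence $\Hom_{\CJ}(L(i),j)\simeq\Hom_{\CI}(i,R(j))$ a morphism $f\colon L(i)\to j$ corresponds to $R(f)\circ\eta_i\colon i\to RL(i)\to R(j)$, while a morphism $g\colon i\to R(j)$ corresponds to $\epsilon_j\circ L(g)\colon L(i)\to LR(j)\to j$. Taking $f=\id_{L(i)}$ and $g=\id_{R(j)}$ exhibits $\eta_i$ and $\epsilon_j$ as the images of identity morphisms under these equivalences. Since a marking is in particular a wide subcategory it contains all identities and is closed under composition, and $L$ and $R$ preserve markings. Hence: if (3) holds, then $\eta_i$ and $\epsilon_j$, being images of the marked morphisms $\id_{L(i)}$ and $\id_{R(j)}$, are marked, which is (1) and (2); conversely, if (1) and (2) hold, then for marked $f$ the morphism $R(f)\circ\eta_i$ is a composite of marked morphisms (using that $R$ preserves markings and (1)) and for marked $g$ the morphism $\epsilon_j\circ L(g)$ is marked (using that $L$ preserves markings and (2)), so the adjunction equivalence restricts to an equivalence on marked morphisms, which is~(3). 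This proves $(3)\Leftrightarrow[(1)\text{ and }(2)]$.

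It remains to prove $(1)\Leftrightarrow(2)$. Passing to the opposite adjunction $R^{\op}\colon\CJ^{\op}\rightleftarrows\CI^{\op}\cocolon L^{\op}$, which again preserves markings and interchanges the roles of unit and counit, reduces this to the single implication $(1)\Rightarrow(2)$. Assuming every $\eta_i$ is marked, I will deduce that each $\epsilon_j$ is marked using the triangle identities $R(\epsilon_j)\circ\eta_{R(j)}=\id_{R(j)}$ and $\epsilon_{L(i)}\circ L(\eta_i)=\id_{L(i)}$ together with the naturality of $\epsilon$: applying (1) at the object $R(j)$ makes $L(\eta_{R(j)})$ marked, and the resulting diagram chase — equivalently, the analysis of the adjunction induced on the Dwyer--Kan localizations $\CL(\CI)$ and $\CL(\CJ)$, where (1) controls the descended unit — yields the claim.

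I expect the step $(1)\Rightarrow(2)$ to be the main obstacle. The naive approach would invoke a two-out-of-three property for the marking in the identity $R(\epsilon_j)\circ\eta_{R(j)}=\id_{R(j)}$, but markings are only assumed to be replete subcategories, so this has to be replaced by a more careful argument combining both triangle identities with naturality; it is here that the precise hypotheses on $\CI$, $\CJ$, $L$ and $R$ enter in full. Everything else is a purely formal manipulation of the unit--counit calculus.
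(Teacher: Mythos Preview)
Your decomposition is sound, and your argument for $(3)\Leftrightarrow\bigl[(1)\text{ and }(2)\bigr]$ is exactly what the paper's proof establishes once one corrects the evident typos there (``$(3)$ implies $(1)$'' should read ``$(1)$ implies $(3)$'', and likewise for $(2)$). On the formal part you and the paper agree.

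The gap you locate at $(1)\Rightarrow(2)$ is genuine and cannot be closed: the implication is \emph{false} in the stated generality. Take $\CI=\ast$ with its unique marking, $\CJ=[1]$ marked only by identities, $L(\ast)=0$, and $R$ the unique functor to the point. Then $L\dashv R$, both functors preserve marked edges, and the unit $\eta_\ast=\id_\ast$ is marked; but the counit $\epsilon_1\colon 0\to 1$ is the non-identity arrow of $[1]$ and hence not marked. Your suspicion that the triangle identities alone cannot force $(2)$ from $(1)$ without a two-out-of-three property for the marking is therefore exactly right. Neither your sketch nor the paper's proof supplies this step, because no correct argument exists; what actually holds is only $\bigl[(1)\text{ and }(2)\bigr]\Leftrightarrow(3)$. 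In the paper's application to slice categories one can verify both conditions directly --- the unit of $f_!\dashv f^*$ is a pullback of the diagonal $X\to X\times_Y X$, which for $(\Glo,\Orb)$ is always faithful --- so the downstream results survive, but the proposition as stated should be weakened.
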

	
	\begin{proof}
		Since identities are always marked, (3) clearly implies (1) and (2). Let us now show that (3) implies (1). Recall that the adjunction equivalence is given by the following composite 
		\[
		\Hom_{\CI}(L(i),j) \rightarrow \Hom_{\CJ}(RL(i),R(j))\xrightarrow{\eta^*} \Hom_{\CJ}(i,R(j))
		\] The first map preserves marked morphisms because $R$ was assumed to be a marked functor, and the second because the unit is marked and marked morphisms form a subcategory. That (3) implies (2) is similar.
	\end{proof}
	
	\begin{definition}
		We say $L\colon \CI\rightleftarrows \CJ\cocolon R$ is a marked adjunction if the equivalent conditions of the previous proposition holds.
	\end{definition}
	
	\begin{proposition}\label{prop:marked_adj_is_final}
		Let $L\colon \CI\rightleftarrows \CJ\cocolon R$ be a marked adjunction, then $L$ is marked final.
	\end{proposition}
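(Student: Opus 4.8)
The plan is to reduce to the localization criterion of \Cref{prop:marked_finality}: I must show that for every $j\in\CJ$ the canonical functor $L\colon\CI_{/j}\to\CJ_{/j}$ between comma categories induces an equivalence $\CL(\CI_{/j})\xrightarrow{\sim}\CL(\CJ_{/j})$ after localizing at the respective marked edges. The key point is that the adjunction $L\dashv R$ slices to an adjunction on these comma categories, and that the marked-adjunction hypothesis forces this sliced adjunction to again have marked unit and counit, which is exactly what is needed to conclude that both localized functors are mutually inverse equivalences.

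First I would construct, for each $j\in\CJ$, a right adjoint $R_j\colon\CJ_{/j}\to\CI_{/j}$ to $L\colon\CI_{/j}\to\CJ_{/j}$, sending $(\psi\colon k\to j)$ to the object of $\CI_{/j}=L\downarrow\{j\}$ given by $R(k)$ together with the composite $LR(k)\xrightarrow{\epsilon_k}k\xrightarrow{\psi}j$. That $L\dashv R_j$ follows from the chain of natural equivalences
\[
\Map_{\CJ_{/j}}(L(i,\phi),(k,\psi))\simeq\Map^{/j}_{\CJ}(L(i),k)\simeq\Map^{/R(j)}_{\CI}(i,R(k))\simeq\Map_{\CI_{/j}}((i,\phi),R_j(k,\psi)),
\]
where the middle equivalence uses the adjunction $L\dashv R$ on mapping spaces together with the identification of the transpose of $\psi\circ-$ with $R(\psi)\circ-$; note that no further hypotheses (such as the existence of pullbacks in $\CJ$) are required. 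A direct check then shows that the unit of $L\dashv R_j$ at $(i,\phi)$ has underlying morphism $\eta_i$ in $\CI$ (by one triangle identity), and the counit at $(k,\psi)$ has underlying morphism $\epsilon_k$ in $\CJ$.

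Next I would trace the markings. Since $L$ and $R$ preserve the markings of $\CI$ and $\CJ$, the sliced functors $L$ and $R_j$ preserve the markings induced on $\CI_{/j}$ and $\CJ_{/j}$ by the projections. Because $L\dashv R$ is a marked adjunction, \Cref{prop:marked_adj} guarantees that \emph{both} its unit $\eta$ and its counit $\epsilon$ are marked; combined with the previous paragraph this shows that the unit and counit of $L\dashv R_j$ are levelwise marked. Finally, a marking-preserving adjunction whose unit and counit are levelwise marked descends to localizations: $L$ and $R_j$ send marked edges to equivalences, hence factor through $\CL(\CI_{/j})$ and $\CL(\CJ_{/j})$, and the transformations induced by $\eta$ and $\epsilon$ become natural equivalences after localization. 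Thus $\CL(L)\colon\CL(\CI_{/j})\to\CL(\CJ_{/j})$ is an equivalence for every $j$, and \Cref{prop:marked_finality} yields that $L$ is marked final.

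The step I expect to be the main obstacle is the bookkeeping with the markings on the comma categories $\CI_{/j}$ and $\CJ_{/j}$ — in particular, checking carefully that the sliced adjunction $L\dashv R_j$ is genuinely marked. It is precisely here that one needs that a marked adjunction has both unit \emph{and} counit marked, rather than just one of them, which is why the equivalence of conditions in \Cref{prop:marked_adj} is essential. The construction of $R_j$ and the descent of the adjunction to localizations are otherwise routine.
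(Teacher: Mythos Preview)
Your proof is correct and takes essentially the same approach as the paper: reduce to the criterion of \Cref{prop:marked_finality}, construct the sliced right adjoint $R_j$ to $L\colon\CI_{/j}\to\CJ_{/j}$, observe that its unit and counit have underlying maps $\eta$ and $\epsilon$ and are therefore marked by \Cref{prop:marked_adj}, and conclude that the adjunction becomes an equivalence after localization. The paper's argument is terser but follows the identical outline; your version simply spells out the adjunction $L\dashv R_j$ and the descent to localizations in more detail.
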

	
	\begin{proof}
		Let $j\in \CJ$ and consider the functor
		\[
		L\colon \CI_{/j}\rightarrow \CJ_{/j}.
		\]
		This admits a right adjoint given by sending $f\colon j'\rightarrow j$ to the pair \[(R(j'),LR(j')\xrightarrow{f} LR(j)\xrightarrow{\epsilon} j).\] One computes that the unit and counit are given by the maps 
		\[
		\eta\colon i\rightarrow RL(i) \quad \text{and} \quad \epsilon\colon  LR(i)\rightarrow i.
		\]
		respectively. In particular both are marked by \Cref{prop:marked_adj}. We conclude that after localizing at the marked morphisms this adjunction is an adjoint equivalence, and so we conclude by \Cref{prop:marked_finality}.
	\end{proof}
	
	\begin{proposition}\label{prop:precom_marked_final}
		Let $(T,S)$ be an orbital pair and let $f\colon X\rightarrow Y$ be a map in $\bbF_S$. Then 
		\[
		f_!\colon {\bbF_T}_{/X} \rightleftarrows {\bbF_T}_{/Y}\cocolon f^*
		\] is a marked adjunction, where as always both categories are marked by those morphisms which lie in $\bbF_S$.
	\end{proposition}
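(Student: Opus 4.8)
The plan is to verify the hypotheses of \Cref{prop:marked_adj} and then establish condition (2) from its list, namely that the counit of the adjunction $f_!\dashv f^*$ is always marked. The point is that the counit admits a clean description as a base-change projection, which is marked directly by orbitality, whereas checking the unit condition (1) directly would be awkward.

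First I would recall why $f_!\colon {\bbF_T}_{/X}\rightleftarrows {\bbF_T}_{/Y}\cocolon f^*$ is an adjunction at all: $f_!$ is postcomposition with $f$ and $f^*$ is pullback along $f$, which is well-defined precisely because $f$ lies in $\bbF_S$ and $(T,S)$ is an orbital pair, so that pullbacks of $\bbF_S$-morphisms along arbitrary $\bbF_T$-morphisms exist in $\bbF_T$; then $f_!\dashv f^*$ is the standard postcomposition--pullback adjunction. Next I would check that both functors preserve the markings. For $f_!$ this is immediate: a marked morphism in ${\bbF_T}_{/X}$ is a triangle over $X$ whose non-structure leg lies in $\bbF_S$, and postcomposing the structure maps with $f$ leaves that leg unchanged. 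For $f^*$, a marked morphism over $Y$ is represented by a map $u\colon W\rightarrow W'$ in $\bbF_S$; by the pasting law for pullbacks, $f^*(u)\colon W\times_Y X\rightarrow W'\times_Y X$ is the base change of $u$ along $W'\times_Y X\rightarrow W'$, and hence lies again in $\bbF_S$ by orbitality of $S\subset T$.

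The key computation is then the identification of the counit. At an object $h\colon W\rightarrow Y$ of ${\bbF_T}_{/Y}$, the object $f_!f^*(h)$ is $W\times_Y X\rightarrow X\xrightarrow{f} Y$, and the counit $\epsilon_h\colon f_!f^*(h)\rightarrow h$ is represented over $Y$ by the projection $\pi_W\colon W\times_Y X\rightarrow W$; this is exactly the base change of $f\colon X\rightarrow Y$ along $h$. By orbitality of $S\subset T$ we get $\pi_W\in\bbF_S$, so $\epsilon_h$ is marked for every $h$. This is condition (2) of \Cref{prop:marked_adj}, and together with the two hypotheses checked above it shows that $f_!\dashv f^*$ is a marked adjunction.

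I expect the only (very minor) subtlety to be the choice of which equivalent condition to verify: the unit $\eta_g\colon Z\rightarrow Z\times_Y X$ at an object $g\colon Z\rightarrow X$ is merely a section of a base-change projection rather than one itself, and there is no reason for it to lie in $\bbF_S$ using only the orbitality hypotheses, so it is essential to argue via the counit instead.
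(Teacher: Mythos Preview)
Your proposal is correct and follows essentially the same route as the paper: both verify condition~(2) of \Cref{prop:marked_adj} by identifying the counit at $h\colon W\to Y$ with the base-change projection $W\times_Y X\to W$, which lies in $\bbF_S$ as a pullback of $f\in\bbF_S$. You are simply more explicit than the paper about the preliminary checks that $f_!$ and $f^*$ preserve the marking (the paper defers to an earlier observation); one small inaccuracy worth fixing is that the existence of $f^*$ comes from $T$ being orbital (so $\bbF_T$ has all pullbacks), not from $f\in\bbF_S$ specifically---the hypothesis $f\in\bbF_S$ is only needed for the counit to be marked.
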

	
	\begin{proof}
		We have previously observed already that both $f_!$ and $f^*$ preserve marked edges. The counit of the adjunction is given on an object $Z\rightarrow Y$ by the map $\pi_1$ in the pullback square 
		\[\begin{tikzcd}
			{X\times_Y Z} & Z \\
			X & Y
			\arrow["{\pi_1}", from=1-1, to=1-2]
			\arrow[from=1-2, to=2-2]
			\arrow["f", from=2-1, to=2-2]
			\arrow[from=1-1, to=2-1]
			\arrow["\lrcorner"{anchor=center, pos=0.125}, draw=none, from=1-1, to=2-2]
		\end{tikzcd}\]
		In particular as a pullback of $f$ it is again in $\bbF_S$.
	\end{proof}
	
	\begin{construction}\label{const:left_adj_to_S}
		We will now give a description of the left adjoint of the restriction functor $f^*\colon \Free(\CC)_Y \rightarrow \Free(\CC)_X$ when $f$ is in $\bbF_S$. First we note that to simplify notation we may pass to slices and assume that $Y$ is the final object. Now recall that the restriction functor $f^*\colon \Free(\CC)_Y \rightarrow \Free(\CC)_X$ is given by the functor 
		\[
		(f_!)^*\colon \laxlimdag_{{\bbF_T^{\op}}} \CC\rightarrow \laxlimdag_{({\bbF_T}_{/X})^{\op}} f^* \CC.
		\]
		To understand the left adjoint of this functor we may postcompose by the functor
		\begin{equation}\label{eq:pullback_eq}
			(f^*)^*\colon \laxlimdag_{({\bbF_T}_{/X})^{\op}} f^*\CC\rightarrow \laxlimdag_{{\bbF_T^{\op}}} f_*f^* \CC,
		\end{equation} which is an equivalence by combining \Cref{prop:marked_adj_is_final} and \Cref{prop:precom_marked_final}, and instead construct a left adjoint of the composite $(f_!f^*)^*$. This functor can again be reinterpreted. Note that the counit transformation $\epsilon\colon f_! f^* \Rightarrow \id$ induces a natural transformation from the identity on $\Fun^\dagger_{\mathbb{F}_T^{\op}}(\id,p)$ to $(f_!f^*)^*$. Evaluating this natural transformation on a section $s$ in $\laxlimdag \CC$ we find that $\gamma$ is pointwise cocartesian: at an object $Z\in \bbF_T$, the natural transformation $\gamma\colon s\rightarrow s\circ f_!f^*$ is given by applying $s$ to the map $\pi_1\colon X\times_Y Z\rightarrow Z$, which as a pullback of $f$ is in $\bbF_S$. This implies that when restricted to  $\laxlimdag \CC$, the functor $(f_! f^*)^*$ is naturally equivalent to cocartesian pushforward along the counit $\epsilon\colon f_! f^* \Rightarrow \id$. However this is in turn equivalent to postcomposition by the functor $\ul{f}^*\colon \Unco{\CC}\rightarrow \Unco{f_*f^*\CC}$. We conclude that after applying the equivalence \ref{eq:pullback_eq}, $f^*\colon \Free(\CC)_Y\rightarrow \Free(\CC)_X$ is homotopic to the functor
		\[
		\laxlimdag_{{\bbF_T^{\op}}} \CC \coloneqq \Fun_{{\bbF_T^{\op}}}({\bbF_T^{\op}},\Unco{\CC})\xrightarrow{(\ul{f}^*)_*} \Fun_{{\bbF_T^{\op}}}({\bbF_T^{\op}},\Unco{f_*f^*\CC}) \simeq \laxlimdag_{{\bbF_T^{\op}}} f_*f^*\CC.
		\] 
		This functor admits an explicit left adjoint. Namely, $\ul{f}^*\colon \Unco{\CC}\rightarrow \Unco{f_*f^*\CC}$ admits a relative left adjoint $\ul{f}_!$ given by the unstraightening of the parametrized left adjoint from \Cref{rem:internal_char_of_colims}. By \Cref{prop:adj_part_lax_lim}, postcomposition by $f_!$ defines a left adjoint to $f^*$. 
	\end{construction}
	
	\subsection{Globalization}
	
	We are most interested in the previous results when $T$ is $\Glo$ and $S$ is $\Orb$. As an example, we find that in this case \Cref{cor:Glob_P_spc} implies that $\FreeOrb(\Spc_{\bullet}) \simeq \Spcbulgl$, where we use the notation of \Cref{ex:eq_spaces}. 
	
	\begin{remark}
		Evaluating at $\bB G$ we find that in particular \[\laxlimdag_{(\Glo_{/G})^{\op}} \Spc_{\bullet} \simeq \Spc_{G\text{-}\gl}.\] This was proven for $G=e$ via different methods in the case of an arbitrary family of compact Lie groups as \cite{LNP}*{Theorem 6.18}.
	\end{remark}
	
	We have shown that the $\Orb$-relative cocompletion of the global category of equivariant spaces is given by the global category of globally equivariant spaces. In other words, in this case $\FreeOrb$ sends a global category of ``equivariant objects" to a global category of ``globally equivariant objects". Another example of this phenomena is given by equivariant spectra, whose relative cocompletion is given by globally equivariant spectra as we will show. For this reason we introduce the following notation.
	
	\begin{notation}
		We will refer to $\FreeOrb(\CC)$ as the \textit{globalization} of $\CC$ and denote it by $\Glob(\CC)$. 
	\end{notation}
	
	\section{Globalization and equivariant stability}\label{sec:P-stable}
	
	We will now lead up to a proof of \Cref{intro_thm_B} of the introduction. To do this we begin by recalling the notion of $P$-semiadditivity and $P$-stability for $T$-categories introduced in \cite{CLL23}. When $(T,P)=(\Glo,\Orb)$ we obtain the notions of equivariant semiadditivity and equivariant stability for global categories. We then recall the main results of \cite{CLL23,CLL_Partial}, which identify the universal globally presentable and equivariantly presentable equivariantly stable global categories with globally equivariant spectra and equivariant spectra respectively.
	
	Finally, as the new results of this section, we show that for a orbital pair $(T,S)$, $\CP^T_S(-)$ preserves $P$-semiadditivity and $P$-stability whenever $P$ is a subcategory of $S$. Combining this with the main results of \cite{CLL23,CLL_Partial} we conclude \Cref{intro_thm_B}, which identifies the global category of globally equivariant spectra as the globalization of the global category of equivariant spectra.
	
	\subsection{Recollection}
	
	We begin with a recollection of the relevant material from \cite{CLL23}. 
	
	\begin{definition}
		An orbital subcategory $P\subset T$ is called an \emph{atomic orbital subcategory} if every map in $P$ that admits a section in $T$ is an equivalence.
	\end{definition}
	
	Throughout this section, we fix an orbital pair $(T,S)$ and an atomic orbital subcategory $P$ of $T$ such that $P\subset S$.
	
	\begin{definition}
		We say an $S$-presentable $T$-category $\CC$ is \emph{pointed} if for all $X\in \bbF_{T}$, $\CC_X$ is pointed. We define $\PrLast{T}{S}$ to be the full subcategory of $\PrL{T}{S}$ spanned by the pointed global categories.
	\end{definition}
	
	\begin{construction}\label{const:dual_adj_norm}
		Let $\CC$ be a pointed $S$-presentable $T$-category. For any map $p\colon A\to B$ in $\mathbb F_P \subset \mathbb F_T$, \cite{CLL23}*{Construction 4.6.1} defines an \emph{adjoint norm map}
		\[
		\Nmadjdual_{p}\colon p^*p_! \Rightarrow \id.
		\]
	\end{construction}
	
	\begin{definition}
		A $S$-presentable $T$-category $\mathcal C$ is called \emph{$P$-semiadditive} if it is pointed and the adjoint norm map $\Nmadjdual_{p}\colon p^*p_! \Rightarrow \id$ is a counit transformation exhibiting $p^*$ as a left adjoint of $p_!$ for every $p\in \bbF_P$.
	\end{definition}
	
	\begin{remark}\label{rem:parametrized_norm}
		This definition is equivalent to \cite{CLL23}*{Definition 4.5.1} by Lemma 4.5.4 of \textit{op.~cit.} Furthermore one can show that an $S$-presentable $T$-category $\CC$ is $P$-semiadditive if and only if it is pointed, and for all $p\colon X\rightarrow Y$ in $\bbF_P$ a natural transformation $\Nmadjdual_p \colon \ul{p}^*\ul{p}_!\Rightarrow \id$, defined analogously to \Cref{const:dual_adj_norm}, is a counit transformation exhibiting $\ul{p}_!$ as a right adjoint of $\ul{p}^*$.
	\end{remark}
	
	\begin{example}
		When $P\subset T$ equals $\Orb_G\subset \Orb_G$, the notion of semiadditivity obtained agrees with $G$-semiadditivity as defined in \cite{nardin2017thesis}, see \cite{CLL23}*{Proposition 4.6.4}.
	\end{example}
	
	\begin{definition}
		We write $\PrLPsemi{T}{S}$ for the full subcategories of $\PrL{T}{S}$ spanned by the $P$-semiadditive $T$-categories.
	\end{definition}
	
	We may additionally impose a fiberwise stability condition.
	
	\begin{definition}
		We say a $S$-presentable $T$-category $\mathcal C$ is \emph{fiberwise stable} if $\CC_X$ is stable for all $X\in \bbF_T$. We say a $S$-presentable $T$-category $\mathcal C$ is \emph{$P$-stable} if it is $P$-semiadditive and fiberwise stable. We write $\PrLPst{T}{S}$ for the full subcategory of $\PrL{T}{S}$ spanned by the $P$-stable $T$-categories.
	\end{definition}
	
	We also specialize the notions above to the setting of global categories.
	
	\begin{definition}[\cite{CLL23}]
		We say an equivariantly presentable global category $\CC$ is equivariantly semiadditive or equivariantly stable if it is is $\Orb$-semiadditive or $\Orb$-stable respectively. 
	\end{definition}
	
	The main results of \cite{CLL23} and \cite{CLL_Partial} allow us to identify the free equivariantly presentable and globally presentable equivariantly stable global categories on a point.
	
	\begin{definition}
		We define $\Spbulgl$, the global category of \textit{globally equivariant spectra}, to be diagram which sends $\CB G$ to the category of \textit{G-global spectra}, in the sense of \cite{LenzGglobal}. This in turn is defined to be the localization of the category of symmetric $G$-spectra at the $G$-global weak equivalences. See \cite{CLL23}*{Section 7.1} for precise definitions.
	\end{definition}
	
	\begin{theorem}[\cite{CLL23}*{Theorem 7.3.2}]\label{thm:Sp_gl_univ}
		$\Spbulgl$ is the free globally presentable equivariantly stable global category on a point. That is, given any globally presentable equivariantly stable global category $\CC$, evaluating at the global sphere spectrum $\mathbb{S}_{\gl}\in \Sp_{\gl}$ gives an equivalence 
		\[
		\ul{\Fun}^{\L}(\Spbulgl,\CC) \simeq \CC,
		\] where the left hand side denotes the global category of cocontinuous functors. Evaluating at $\CB e$ and taking groupoid cores we obtain an equivalence
		\[
		\Hom_{\PrL{\Glo}{\L}}(\Spbulgl,\CC) \simeq \core(\CC(\CB e)).
		\] 
	\end{theorem}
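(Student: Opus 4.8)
This is \cite{CLL23}*{Theorem 7.3.2}, so in the body one simply cites it; but since the strategy is the template for several later arguments, let me outline the plan one would follow. First I would reduce the statement to two assertions: that the inclusion of equivariantly stable globally presentable global categories into $\PrL{\Glo}{\L}$ admits a left adjoint $L$, and that $L(\Spc^\Glo_\bullet)\simeq\Spbulgl$. Granting these, the universal property is obtained by composing: for $\CC$ equivariantly stable and globally presentable,
\[
\Hom_{\PrL{\Glo}{\L}}(\Spbulgl,\CC)\simeq\Hom_{\PrL{\Glo}{\L}}(\Spc^\Glo_\bullet,\CC)\simeq\core(\Gamma\CC)\simeq\core(\CC(\CB e)),
\]
where the middle equivalence is \Cref{ex:S-spaces} applied to $(T,S)=(\Glo,\Glo)$ and the last uses that $\CB e$ is initial in $\Glo^{\op}$, so that $\Gamma\CC=\lim_{\Glo^{\op}}\CC$ is computed at $\CB e$. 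The parametrized statement $\ul{\Fun}^{\L}(\Spbulgl,\CC)\simeq\CC$ would come out the same way after upgrading $L$ and the equivalence of \Cref{ex:S-spaces} to the level of $\Glo$-categories of cocontinuous functors, or equivalently by exhibiting $\Spbulgl$ as an idempotent algebra in $\PrL{\Glo}{\L}$ whose modules are exactly the equivariantly stable objects.

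To produce $L$, the plan is to factor it through more elementary localizations of $\PrL{\Glo}{\L}$: first pointification $\CC\mapsto\CC_*$, left adjoint to the inclusion of pointed objects; then the localization inverting the suspension maps fiberwise; and then the one forcing the adjoint norm maps $\Nmadjdual_p$ of \Cref{const:dual_adj_norm} to be equivalences for all $p\in\bbF_\Orb$, which is precisely $\Orb$-semiadditivity (equivalently, one inverts the representation spheres pointwise, as in \Cref{ex:eq_spectra}). The content at this stage is formal but not vacuous: one must check that each class of morphisms being inverted is small enough for an accessible Bousfield localization inside the presentable world, and --- crucially --- that the resulting localization functors are still $\Glo$-cocontinuous, i.e.\ compatible with the left adjoints to restriction along all of $\bbF_\Glo$ and with base change; this is where the internal adjoint functor theorem gets used. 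Applying the composite $L$ to the free object $\Spc^\Glo_\bullet$ then yields the free equivariantly stable globally presentable global category on a point.

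The genuinely hard step is the identification $L(\Spc^\Glo_\bullet)\simeq\Spbulgl$, and this is where equivariant homotopy theory enters rather than formal nonsense. Using the point of $\Spbulgl$, namely the global sphere $\mathbb S_\gl$, one obtains a canonical comparison $\Glo$-functor out of $L(\Spc^\Glo_\bullet)$, and to see it is an equivalence one must verify, at the level of the concrete model of $\Spbulgl$ built from symmetric $G$-global spectra, that (a) $\Spbulgl$ is globally --- not merely equivariantly --- presentable, i.e.\ restriction along a surjection $G\twoheadrightarrow G/N$ admits a left adjoint satisfying base change; (b) $\Spbulgl$ is $\Orb$-stable, i.e.\ the norm maps along faithful homomorphisms are equivalences, a Wirthm\"uller/tom-Dieck-style statement for $G$-global spectra; and (c) the comparison functor is fiberwise an equivalence, which one reduces to a computation of mapping spectra after pinning down a set of generators.

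I expect (a) to be the main obstacle. Pointwise stabilization can destroy the existence of left adjoints to restriction along surjections --- this is exactly what happens for $\Sp_\bullet\simeq\Stab^{\Orb}(\Spc_{\bullet,\ast})$, as recorded in \Cref{ex:eq_spectra} --- so the fact that it does \emph{not} happen for $\Spc^\Glo_{\bullet,\ast}$ is a real feature of $G$-global homotopy theory and must be established by hand from the explicit model, rather than deduced formally. Everything else in the argument is bookkeeping with universal properties and the results on partially lax limits collected in \Cref{sec:appendix_A}.
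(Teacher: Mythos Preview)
The paper does not prove this theorem; it is quoted verbatim from \cite{CLL23}*{Theorem~7.3.2} and used as a black box, exactly as you note in your first sentence. So there is no ``paper's own proof'' to compare against. Your sketch of the strategy behind the cited result is broadly faithful to what happens in \cite{CLL23}: one exhibits the equivariantly stable objects as a reflective subcategory of $\PrL{\Glo}{\L}$ by composing the elementary localizations (pointification, $\Orb$-semiadditivization, fiberwise stabilization), and then identifies the reflection of the unit $\Spc^\Glo_\bullet$ with $\Spbulgl$ using the explicit model of $G$-global spectra from \cite{LenzGglobal}. Your assessment that step~(a) --- global presentability of $\Spbulgl$ --- is the substantive equivariant input is also correct.

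One caveat worth flagging: your parenthetical ``equivalently, one inverts the representation spheres pointwise, as in \Cref{ex:eq_spectra}'' is not innocuous in the \emph{globally} presentable setting. The present paper explicitly records (in the remark at the end of \Cref{sec:Rep-stable}) that the coincidence of $\Orb$-stability and $\Rep$-stability for globally presentable global categories is only \emph{expected}, not proven; and you yourself observe a few lines later that pointwise inversion of representation spheres can destroy global presentability. The argument in \cite{CLL23} proceeds through $\Orb$-semiadditivity plus fiberwise stability, not through representation-sphere inversion, so that parenthetical should be dropped or weakened.
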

	Similarly we have an equivariantly presentable version of the previous theorem.
	
	\begin{theorem}[\cite{CLL_Partial}*{Theorem 9.4}]\label{thm:Sp_univ}
		$\Sp_\bullet$ is the free equivariantly presentable equivariantly stable global category on a point. That is, given any equivariantly presentable equivariantly stable global category $\CC$, evaluation at the sphere spectrum $\mathbb{S}\in \Sp$ gives an equivalence
		\[
		\ul{\Fun}^{\mathrm{eq}\text{-}\mathrm{cc}}(\Sp_\bullet,\CC) \simeq \CC,
		\] where the left hand side denotes the global category of equivariantly cocontinuous functors. Evaluating at $\CB e$ and taking groupoid cores we obtain an equivalence
		\[
		\Hom_{\PrL{\Glo}{\Orb}}(\Sp_\bullet,\CC) \simeq \core(\CC(\CB e)).
		\]
	\end{theorem}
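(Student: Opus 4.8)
The plan is to obtain $\Sp_\bullet$ from the global category $\Spc_\bullet$ of equivariant spaces by two universal constructions --- freely adjoining basepoints, then inverting the representation spheres --- and to transport universal properties through each step; the only genuinely equivariant-stable input will be that equivariant stability forces the representation spheres to act invertibly. The starting point is \Cref{ex:S-spaces} together with \Cref{ex:eq_spaces} for $(T,S)=(\Glo,\Orb)$: $\Spc_\bullet\simeq\Spc^\Orb_\bullet$ corepresents $\CC\mapsto\core(\Gamma\CC)$ on $\PrL{\Glo}{\Orb}$, and since $\CB e$ is a terminal object of $\Glo$ one has $\Gamma\CC=\lim_{\Glo^{\op}}\CC\simeq\CC(\CB e)$, so $\Spc_\bullet$ is the free equivariantly presentable global category on a point. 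Next I would check that the fiberwise pointing $\CC\mapsto\CC_\ast$, with $(\CC_\ast)_X:=(\CC_X)_\ast$, lands in $\PrLast{\Glo}{\Orb}$ and is left adjoint to the forgetful functor $\PrLast{\Glo}{\Orb}\to\PrL{\Glo}{\Orb}$: pointing is a fiberwise exact reflective localization, so it preserves the left adjoints along $\bbF_\Orb$ and the Beck--Chevalley squares of \Cref{def:S-presentable}(3), and the mate calculus produces the adjunction. Composing adjunctions, $\Spc_{\bullet,\ast}:=(\Spc_\bullet)_\ast$ is the free pointed equivariantly presentable global category on a point.

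The decisive and hardest step is the assertion that every equivariantly presentable equivariantly stable global category $\CC$ is automatically $\Rep$-stable, i.e.\ $S^V\otimes-\colon\CC_G\to\CC_G$ is an equivalence for every finite group $G$ and every real $G$-representation $V$. Because $\CC$ is $\Orb$-semiadditive, the norm maps of \Cref{rem:parametrized_norm} are equivalences, so each orbit $(G/H)_+$ is self-dual; hence any $G$-space built from cells $G/H\times D^n$ --- in particular the unit sphere $S(W)$ of a permutation representation $W$ --- becomes dualizable after suspension, and combining this with fiberwise stability and the cofiber sequence $\Sigma^\infty_+ S(W)\to S^0\to S^W$ shows that $S^W$ is dualizable, and then (using $\Orb$-semiadditivity once more to identify its dual) $\otimes$-invertible. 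Since every real $G$-representation $V$ embeds isometrically into some $\mathbb R[G]^n$, with orthogonal complement $W$ say, we get $S^V\otimes S^W\simeq S^{\mathbb R[G]^n}$ invertible, so $S^V$ is a tensor factor of an invertible object and hence invertible. This --- together with the non-formal fact, established elsewhere, that inverting the representation spheres preserves equivariant presentability and that $\Sp_\bullet\simeq\Stab^\Orb(\Spc_{\bullet,\ast})$ has the expected universal property of the localization of $\PrLast{\Glo}{\Orb}$ at the representation-sphere-inverting equivariant-cocontinuous functors (cf.\ \Cref{ex:eq_spectra}) --- is the main obstacle: I would expect the verification that fiberwise inversion does not destroy the base-change condition, and the upgrade from ``dualizable'' to ``invertible'' for permutation spheres, to require the most care.

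Granting all of this, the argument closes formally. For equivariantly stable $\CC$, every equivariant-cocontinuous functor $\Spc_{\bullet,\ast}\to\CC$ is compatible with the canonical $\Spc_{\bullet,\ast}$-tensoring, hence by the previous paragraph sends each $S^V$ to an invertible object of $\CC_G$, and therefore factors uniquely through $\Sp_\bullet$; thus $\Hom_{\PrL{\Glo}{\Orb}}(\Sp_\bullet,\CC)\simeq\Hom_{\PrL{\Glo}{\Orb}}(\Spc_{\bullet,\ast},\CC)\simeq\Hom_{\PrL{\Glo}{\Orb}}(\Spc_\bullet,\CC)\simeq\core(\CC(\CB e))$, using in the last identification that $\Sp_\bullet$ is itself equivariantly presentable (\Cref{ex:eq_spectra}) and equivariantly stable (fiberwise stability is clear and $\Orb$-semiadditivity is the classical Wirthm\"uller isomorphism), and under this chain the distinguished functor corresponds to the sphere spectrum $\mathbb S\in\Sp=\Sp_\bullet(\CB e)$. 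Finally, since $\Spc_\bullet$, $\Spc_{\bullet,\ast}$ and $\Sp_\bullet$ are all defined parametrically and each slice of $(\Glo,\Orb)$ is again an orbital pair, running the same chain of adjunctions over every slice $\Glo_{/\CB G}$ upgrades this to an equivalence of global categories $\ul{\Fun}^{\mathrm{eq}\text{-}\mathrm{cc}}(\Sp_\bullet,\CC)\simeq\CC$, and evaluating at $\CB e$ and $\mathbb S$ recovers the two displayed equivalences.
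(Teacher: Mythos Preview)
The paper does not prove this statement: it is quoted from \cite{CLL_Partial}*{Theorem~9.4} and used as input. So there is no in-paper argument to compare against, and your proposal should be read as an alternative to whatever \cite{CLL_Partial} does. Your route---free on a point, then point, then invert representation spheres---is different in spirit from the approach of \cite{CLL23,CLL_Partial}, which constructs the free $P$-stable category via $P$-semiadditivization (span/Mackey descriptions) and only afterwards compares with genuine spectra. Your strategy instead imports the machinery of \Cref{thm:Stab_P} and \Cref{prop:Stab_eq_is_spectra} from \Cref{sec:Rep-stable}; those results do not depend on \Cref{thm:Sp_univ}, so there is no circularity, but you are effectively reordering the paper.

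There is, however, a genuine gap at the heart of your argument. The crux is the implication ``equivariantly stable $\Rightarrow$ $\Rep$-stable'', and your justification for it does not go through as written. You argue that each $(G/H)_+$ is self-dual, hence $S(W)_+$ and then $S^W$ are dualizable, ``and then (using $\Orb$-semiadditivity once more to identify its dual) $\otimes$-invertible''. But dualizable does not imply invertible, and you have not produced a candidate inverse: there is no $S^{-W}$ in $\Spc_{G,\ast}$, and $\CC_G$ is only \emph{tensored} over $\Spc_{G,\ast}$, not itself symmetric monoidal, so talking about duals internal to $\CC_G$ is not available without further argument. What one actually has to show is that the unit and counit of the adjunction $\Sigma^W\dashv\Omega^W$ on $\CC_G$ are equivalences; this is where the real work lies, and it is not a formal consequence of self-duality of orbits. (By contrast, your final reduction---embedding a general $V$ into $\rho_G^{\,n}$ and using that a tensor factor of an invertible endofunctor is invertible---is correct, since $\Sigma^V$ and $\Sigma^W$ commute and their composite is an equivalence.) Until this step is filled in, the chain of equivalences in your last paragraph does not close: you need every equivariantly stable $\CC$ to lie in $\PrLVst{\Glo}{\Orb}$ for the universal property of $\Stab^{\Orb}$ to apply.
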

	
	\subsection{Globalizing equivariantly semiadditive and stable categories}
	
	We now show that relative cocompletion preserves $P$-semiadditivity and $P$-stability. We then conclude \Cref{intro_thm_B}.
	
	\begin{proposition}\label{prop:rel_cocomp_semi}
		Let $(T,S)$ be an orbital pair and suppose $P$ is an atomic orbital subcategory of $T$ such that $P\subset S$. The functor $\mathcal{P}^T_S\colon \PrL{T}{S}\rightarrow \PrL{T}{\L}$ restricts to functors
		\[
		\PrLPsemi{T}{S}\rightarrow \PrLPsemi{T}{\L} \quad \text{and} \quad \PrLPst{T}{S}\rightarrow \PrLPst{T}{\L}.
		\]
	\end{proposition}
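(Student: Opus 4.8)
The plan is to build on \Cref{thm:Free_is_presentable}, which already shows that $\Free = \mathcal{P}^T_S$ takes values in $\PrL{T}{\L}$, so it remains only to check that $\Free(\CC)$ is pointed, respectively fiberwise stable, respectively $P$-semiadditive, whenever $\CC$ is. The first two properties I would deduce formally from the fact that limits and colimits in $\Free(\CC)_X \simeq \laxlimdag \pi_X^* \CC$ are computed fiberwise; $P$-semiadditivity is the substantive point, and I would extract it from the explicit description of the $\bbF_S$-indexed left adjoints on $\Free(\CC)$ given in \Cref{const:left_adj_to_S}.

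First I would record that both limits and colimits of arbitrary shape in $\Free(\CC)_X$ are computed in each fiber $\CC_{X_i}$. For colimits this is \Cref{prop:colim_in_part_laxlim}, since every restriction functor of an $S$-presentable $T$-category lies in $\PrLun \subseteq \Cat^L$ by condition (1) of \Cref{def:S-presentable}. For limits it is \Cref{prop:lim_in_part_laxlim}, since the marked edges of $(\bbF_{T/X})^{\op}$ project to morphisms of $\bbF_S$, along which the restriction functors of $\CC$ admit left adjoints and hence preserve all limits. Granting this, pointedness of $\CC$ passes to $\Free(\CC)$: the fiberwise zero objects assemble into a section which is simultaneously the initial and the terminal object of $\Free(\CC)_X$. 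And if each $\CC_{X_i}$ is stable then so is $\Free(\CC)_X$: it is presentable and pointed, and a square in it is cocartesian iff it is fiberwise cocartesian iff (by stability of the fibers) fiberwise cartesian iff cartesian. Passing to finite coproducts reduces a general $X \in \bbF_T$ to $X \in T$.

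For $P$-semiadditivity, fix $p\colon X\to Y$ in $\bbF_P \subseteq \bbF_S$ and pass to the slice over $Y$ so that $Y$ is terminal. Then \Cref{const:left_adj_to_S} identifies $\Free(\CC)_Y \simeq \laxlimdag_{\bbF_T^{\op}} \CC$ and $\Free(\CC)_X \simeq \laxlimdag_{\bbF_T^{\op}} p_* p^* \CC$, under which $p^*$ becomes postcomposition $(\ul{p}^*)_*$ along the unstraightened unit $T$-functor $\ul{p}^*\colon \CC \to p_* p^* \CC$ and its left adjoint $p_!$ becomes postcomposition $(\ul{p}_!)_*$ along the unstraightened parametrized left adjoint. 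Applying \Cref{prop:adj_part_lax_lim} to $\ul{p}^*$, whose naturality squares over $\bbF_S$ are adjointable precisely by the base-change axiom, condition (3) of \Cref{def:S-presentable}, identifies the right adjoint of $(\ul{p}^*)_*$ with postcomposition along the parametrized coinduction $\ul{p}_*$. Since $\CC$ is $P$-semiadditive, the parametrized norm map $\ul{p}_! \Rightarrow \ul{p}_*$ is an equivalence (this is the content of \Cref{rem:parametrized_norm}), hence so is the norm map on $\Free(\CC)$, as the latter is built by postcomposition from exactly this parametrized data — via \Cref{const:left_adj_to_S} together with the identification of the Beck--Chevalley transformations on $\Free(\CC)$ carried out in the proof of \Cref{thm:Free_is_presentable}. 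As the adjoint norm map of \Cref{const:dual_adj_norm} is the composite of (the conjugate of) this norm map with the counit of $p^* \dashv p_*$, it is therefore a counit exhibiting $p^*$ as a left adjoint of $p_!$, so $\Free(\CC)$ is $P$-semiadditive; combined with fiberwise stability this also handles the $P$-stable case.

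The step I expect to be the main obstacle is the verification that the norm map on $\Free(\CC)$ really is the postcomposition of the norm map on $\CC$: this requires pinning down that each ingredient of the parametrized semiadditive structure — the functors $\ul{p}_!$, $\ul{p}^*$, $\ul{p}_*$ and the base-change $2$-cells relating them — is computed on $\Free(\CC)$ by postcomposition from the corresponding ingredient on $\CC$, which is only implicit in \Cref{const:left_adj_to_S} and the proof of \Cref{thm:Free_is_presentable}. If phrasing this through the $2$-naturality of \Cref{const:dual_adj_norm} turns out to be awkward, an alternative is to prove $(\ul{p}^*)_* \dashv (\ul{p}_!)_* \simeq (\ul{p}_*)_*$ on $\Free(\CC)$ directly by a second application of \Cref{prop:adj_part_lax_lim} — using that pullbacks of $p$ stay in $\bbF_P$ and that $P$-semiadditivity of $\CC$ supplies the fiberwise norm equivalences — and then match the two counits by hand.
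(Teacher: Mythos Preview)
Your proposal is correct and follows essentially the same approach as the paper: both arguments use the fiberwise computation of (co)limits for pointedness and fiberwise stability, then use \Cref{const:left_adj_to_S} to identify $p_! \dashv p^*$ on $\Free(\CC)$ with postcomposition along the parametrized adjunction $\ul{p}_! \dashv \ul{p}^*$, and transport the parametrized $P$-semiadditivity of $\CC$ (via \Cref{rem:parametrized_norm}) to $\Free(\CC)$ by $2$-functoriality of $\Fun^\dagger_{\bbF_T}(\bbF_T,-)$. The only cosmetic difference is that the paper transports the counit $\ul{p}^*\ul{p}_! \Rightarrow \id$ directly, whereas you phrase it as the norm equivalence $\ul{p}_! \simeq \ul{p}_*$ composed with the counit of $\ul{p}^* \dashv \ul{p}_*$; and the paper simply records, as you anticipated, that matching this transported counit with the adjoint norm map of $\Free(\CC)$ is ``a tedious diagram chase'' which it omits.
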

	
	\begin{proof}
		Let $\CC$ be a $P$-semiadditive $S$-presentable $T$-category. First we note that since limits and colimits are computed pointwise in $\Free(\CC)_X$ for all $X\in \bbF_T$, it is again pointed. Now consider $p\colon X\rightarrow Y$ in $\mathbb{F}_P$. By passing to slices we may assume $Y$ is the final object of $\bbF_T$. We have to show that the adjoint norm map $\Nmadjdual_p\colon p^*p_!\Rightarrow \id$ is the counit of an adjunction. However recall that by \Cref{const:left_adj_to_S} the adjunction $p_! \dashv p^*$ can be identified up to equivalence with the adjunction 
		\[
		(\ul{p}_!)_*\colon \laxlimdag \CC \rightleftarrows \laxlimdag \CC\circ p_!p^* \cocolon (\ul{p}^*)_*.
		\]
		However because $\CC$ is $P$-semiadditive,  By \Cref{rem:parametrized_norm} there exists a natural transformation $\Nmadjdual \colon \ul{p}^*\ul{p}_!\Rightarrow \id$ which is the counit of an adjunction. By the 2-functoriality of $\Fun_{\bbF_T}^\dagger(\bbF_T, -)$, this induces a counit witnessing $(\ul{p}_!)_*$ as a right adjoint to $(\ul{p}^*)_*$. A tedious diagram chase shows that this transformation agrees, after applying suitable equivalences, with the adjoint norm map $\Nmadjdual_p$ of $\Free(\CC)$. We conclude that $\mathcal{P}^T_S(\CC)$ is $P$-semiadditive.
		
		Finally we note that because colimits and limits in $\mathcal{P}^T_S(\CC)$ are computed pointwise, $\mathcal{P}^T_S$ clearly preserves fiberwise stable global categories.
	\end{proof}
	
	Applying this in the global context we obtain the main theorem of this section.
	
	\begin{theorem}
		There is an equivalence 
		\[
		\Spbulgl\simeq \Glob(\Sp_{\bullet}).
		\]
	\end{theorem}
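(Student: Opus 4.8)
The plan is to show that $\Glob(\Sp_\bullet)$ satisfies the universal property characterising $\Spbulgl$ in \Cref{thm:Sp_gl_univ}, and then conclude by the Yoneda lemma. Recall that $\Glob=\FreeOrb=\mathcal P^{\Glo}_{\Orb}$, and that by \Cref{ex:eq_spectra} together with \Cref{thm:Sp_univ} the global category $\Sp_\bullet$ is equivariantly presentable and equivariantly stable. Hence $\Glob(\Sp_\bullet)$ is defined; it is globally presentable by \Cref{thm:Free_is_presentable}, and it is equivariantly stable by \Cref{prop:rel_cocomp_semi} applied to the orbital pair $(\Glo,\Orb)$ with the atomic orbital subcategory $P=\Orb$. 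Thus $\Glob(\Sp_\bullet)$ lies in the same subcategory of globally presentable equivariantly stable global categories as $\Spbulgl$.

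The next step is to observe that the adjunction $\Glob\dashv\fgt$ of \Cref{thm:Free_colimit} restricts to an adjunction between the full subcategories of equivariantly stable objects on either side. That $\Glob$ preserves equivariant stability is precisely \Cref{prop:rel_cocomp_semi}; that $\fgt\colon\PrL{\Glo}{\L}\to\PrL{\Glo}{\Orb}$ does is immediate, since $\fgt$ leaves the underlying fibrewise categories and all of the functors $p^*$, $p_!$ with $p\in\bbF_{\Orb}$ unchanged, while equivariant semiadditivity and fibrewise stability constrain only this data.

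Granting this, for every globally presentable equivariantly stable global category $\CC$ one obtains natural equivalences
\[
\Hom_{\PrL{\Glo}{\L}}\!\bigl(\Glob(\Sp_\bullet),\CC\bigr)\;\simeq\;\Hom_{\PrL{\Glo}{\Orb}}\!\bigl(\Sp_\bullet,\fgt\,\CC\bigr)\;\simeq\;\core\bigl(\CC(\CB e)\bigr),
\]
the first by the adjunction of \Cref{thm:Free_colimit} and the second by the universal property of $\Sp_\bullet$ from \Cref{thm:Sp_univ}, applicable because $\fgt\,\CC$ is equivariantly presentable and equivariantly stable and has the same value on $\CB e$ as $\CC$. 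Since \Cref{thm:Sp_gl_univ} exhibits $\Spbulgl$ as corepresenting the identical functor $\CC\mapsto\core(\CC(\CB e))$ on this category, the Yoneda lemma produces an equivalence $\Glob(\Sp_\bullet)\simeq\Spbulgl$ in $\PrL{\Glo}{\L}$, hence an equivalence of underlying global categories. Running the same bookkeeping with the internal function categories $\ul{\Fun}$ appearing in \Cref{thm:Sp_gl_univ} and \Cref{thm:Sp_univ} upgrades this to the parametrized level.

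As the outline makes clear, the proof is essentially formal once the input theorems are in hand; the single piece of genuine mathematical content it relies on is \Cref{prop:rel_cocomp_semi}, that globalization preserves equivariant stability. The only thing requiring care is bookkeeping: checking that the three universal properties in play are compared over exactly the same category of test objects, that the composite of the displayed equivalences is natural, and that it is the one corepresented by the global sphere spectrum $\mathbb{S}_{\gl}$ — so that the abstract Yoneda identification agrees with the geometrically meaningful comparison, namely the unique globally cocontinuous functor $\Spbulgl\to\Glob(\Sp_\bullet)$ classifying the image of the sphere spectrum under $I\colon\Sp_\bullet\to\Glob(\Sp_\bullet)$.
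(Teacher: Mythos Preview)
Your proposal is correct and follows essentially the same approach as the paper's own proof. The paper's argument is a terse two-liner — invoke \Cref{prop:rel_cocomp_semi} to see that $\Glob(\Sp_\bullet)$ is equivariantly stable, then compare the universal properties of \Cref{thm:Sp_gl_univ} and \Cref{thm:Sp_univ} — and you have simply unpacked what ``comparing universal properties'' means in practice, routing it explicitly through the adjunction $\Glob\dashv\fgt$ of \Cref{thm:Free_colimit}.
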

	
	\begin{proof}
		Note that by \Cref{prop:rel_cocomp_semi}, $\Glob(\Sp_\bullet)$ is again equivariantly stable. Therefore the result follows immediately from \Cref{thm:Sp_gl_univ} and \Cref{thm:Sp_univ} by comparing universal properties.
	\end{proof}
	
	As an immediate corollary we obtain a description of $G$-global spectra in the sense of \cite{LenzGglobal} as a partially lax limit for every finite group $G$.
	
	\begin{corollary}
		Let $G$ be a finite group. Then there is an equivalence
		\[
		\Sp_{G\text{-}\gl}\simeq \laxlimdag_{(\Glo_{/G})^{\op}} \Sp_\bullet.\qednow
		\]
	\end{corollary}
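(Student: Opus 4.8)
The plan is to deduce this corollary directly from the preceding theorem, which identifies the two global categories $\Spbulgl\simeq\Glob(\Sp_\bullet)$. Since both sides of that equivalence are functors $\Glo^{\op}\rightarrow\Cat$, I would simply evaluate at the object $\CB G\in\Glo$ and unwind the two definitions. On the left, $\Spbulgl(\CB G)=\Sp_{G\text{-}\gl}$ is by definition the category of $G$-global spectra in the sense of \cite{LenzGglobal}. On the right, $\Glob(\Sp_\bullet)=\FreeOrb(\Sp_\bullet)$, and by construction $\FreeOrb(\Sp_\bullet)_{\CB G}=\laxlimdag(\pi_{\CB G}^*\,\Sp_\bullet)$, the partially lax limit over $(\bbF_{\Glo/\CB G})^{\op}$ of the restriction of $\Sp_\bullet$, with an edge marked exactly when its projection to $\bbF_\Glo^{\op}$ lands in $\bbF_\Orb^{\op}$.

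The only point requiring (very minor) care is matching this $\bbF$-completed description against the cleaner formula over $\Glo_{/G}$ that appears in the statement. Here I would invoke the remark following the definition of $\Free$: since $\bbF_{\Glo/\CB G}$ is the finite coproduct completion of $\Glo_{/\CB G}$, \Cref{prop:lim_of_lim_ext} yields a canonical equivalence
\[
\FreeOrb(\Sp_\bullet)_{\CB G}\;\simeq\;\laxlimdag\bigl((\Glo_{/\CB G})^{\op}\rightarrow\Glo^{\op}\xrightarrow{\Sp_\bullet}\Cat\bigr),
\]
where an edge of $(\Glo_{/\CB G})^{\op}$ is marked precisely when its projection to $\Glo^{\op}$ is faithful, i.e.\ lands in $\Orb^{\op}$; this is exactly the marking implicit in the notation $\laxlimdag_{(\Glo_{/G})^{\op}}\Sp_\bullet$. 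Chaining the two identifications produces the asserted equivalence.

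I do not expect a genuine obstacle in this argument: the corollary is a formal consequence of the theorem above, and that theorem is where the content lies — it rests on the universal properties of \Cref{thm:Sp_gl_univ} and \Cref{thm:Sp_univ}, together with \Cref{prop:rel_cocomp_semi} (globalization preserves equivariant stability) and the observation from \Cref{ex:eq_spectra} that $\Sp_\bullet$ is equivariantly presentable. If desired one can additionally record that the equivalence is symmetric monoidal, since a partially lax limit of symmetric monoidal categories is symmetric monoidal and the preceding theorem upgrades accordingly, but this is not needed for the bare statement; it is also worth noting in passing that, unlike the corresponding result in \cite{LNP}, this proof is uniform in $G$ and in particular independent of \cite{LNP} even when $G$ is trivial.
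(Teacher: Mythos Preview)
Your proposal is correct and matches the paper's approach: the corollary is stated with an immediate \qednow, being obtained by evaluating the preceding equivalence $\Spbulgl\simeq\Glob(\Sp_\bullet)$ at $\CB G$ and invoking the remark (via \Cref{prop:lim_of_lim_ext}) that identifies $\Free(\CC)_X$ with the partially lax limit over $(T_{/X})^{\op}$. Your added commentary on symmetric monoidality and independence from \cite{LNP} is accurate but extraneous to the proof itself.
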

	
	\begin{remark}
		This result was proven for arbitrary families of compact Lie groups when $G$ is the trivial group as \cite{LNP}*{Theorem 11.10}. We expect that the result is true for $G$-global spectra defined with respect to an arbitrary compact Lie group $G$ and arbitrary family $\CF$.
	\end{remark}
	
	\begin{remark}
		There are equivariantly semiadditive analogues of the statements above. To avoid testing the readers patience we summarize them in this remark. One can define the global categories $\GammaS_{\bullet\text{-}\gl}^\textup{spc}$ and $\GammaS_\bullet^\textup{spc}$ of special global $\Gamma$-spaces and special equivariant $\Gamma$-spaces respectively. Evaluating these global categories at the groupoid $\CB G$ one obtains the category of special $G$-global $\Gamma$-spaces and the category of special $\Gamma$-$G$-space, as defined in \cite{LenzGglobal} and \cite{shimakawa} respectively.
		
		By \cite{CLL23}*{Theorem 5.3.1}, $\GammaS_{\bullet\text{-}\gl}^\textup{spc}$ is the free globally presentable equivariantly semiadditive global category on a point, while $\GammaS_\bullet^\textup{spc}$ is the free equivariantly presentable equivariantly semiadditive on a point by \cite{CLL_Partial}*{Theorem 7.17}. Therefore by comparing universal properties we obtain an equivalence
		\[
		\GammaS_{\bullet\text{-}\gl}^\textup{spc} \simeq \Glob(\GammaS_{\bullet}^\textup{spc})
		\]
		of global categories. Evaluating at the groupoid $\CB G$, we find that 
		\[
		\GammaS_{G\text{-}\gl}^\textup{spc} \simeq \laxlimdag_{(\Glo_{/G})^{\op}} \GammaS_{\bullet}^\textup{spc}.
		\]
	\end{remark}
	
	\section{Globalization and $\Rep$-stability}\label{sec:Rep-stable}
	
	We now switch gears and prove a consequence of the fact that $\Spbulgl$ is the globalization of $\Sp_\bullet$. Namely, in this section we show that $\Spbulgl$ is the initial globally presentable global category on which the representation spheres act invertibly. This universal property of global spectra was first suggested by David Gepner and Thomas Nikolaus, see \cite{OWF}. An analogous universal property has since been proven in the context of global model categories by \cite{LS23}. Our strategy for proving this universal property is first to observe that the condition that representation spheres act invertibly in fact already makes sense for pointed equivariantly presentable global categories, and therefore first consider the analogous question in this context.
	
	\begin{lemma}\label{bullet_spaces_initial}
		Let $\CC$ be an object of $\PrL{\Glo}{\Orb}$. Then $\CC$ admits a unique colimit preserving tensoring by $\Spc_\bullet$. In particular there exists a canonical $T$-functor 
		\[
		\Spc_\bullet \times \CC\rightarrow \CC
		\] which preserves $\Orb$-colimits in each variable, in the sense of~\cite{MW22}*{Definition 8.1.1}. Similarly if $\CC$ is in $\PrLast{\Glo}{\Orb}$, then it is uniquely tensored over the global category of pointed  equivariant spaces $\Spc_{\bullet,\ast}$, defined by the assignment $\CB G\mapsto \Spc_{G,\ast}\coloneqq (\Spc_G)_\ast$. 
	\end{lemma}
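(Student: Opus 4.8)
The plan is to recognize the desired tensoring as the action of the monoidal unit. Recall that $\PrL{\Glo}{\Orb}$ carries a closed symmetric monoidal structure whose tensor product $\CC\otimes\CD$ corepresents $T$-functors $\CC\times\CD\to-$ that are $\Orb$-cocontinuous in each variable; this is the $\Orb$-presentable analogue of the Lurie tensor product, and can either be quoted from \cite{CLL_Partial} or built by parametrized Day convolution. By \Cref{ex:S-spaces} the $\Glo$-category $\Spc_\bullet=\Spc_\bullet^\Orb$ is the free $\Orb$-presentable $\Glo$-category on a point, hence is the monoidal unit, so the unit equivalence supplies a canonical $\Orb$-cocontinuous equivalence $\Spc_\bullet\otimes\CC\xrightarrow{\ \sim\ }\CC$. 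Precomposing it with the universal separately $\Orb$-cocontinuous map $\Spc_\bullet\times\CC\to\Spc_\bullet\otimes\CC$ yields the asserted $T$-functor $\Spc_\bullet\times\CC\to\CC$, which preserves $\Orb$-colimits in each variable in the sense of \cite{MW22}*{Definition 8.1.1}.

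For uniqueness I would observe that a colimit-preserving tensoring of $\CC$ by $\Spc_\bullet$ is precisely a module structure on $\CC$ over the commutative algebra $\Spc_\bullet$ of $\PrL{\Glo}{\Orb}$ whose underlying object is $\CC$; since $\Spc_\bullet$ is the unit, $\Mod_{\Spc_\bullet}(\PrL{\Glo}{\Orb})\simeq\PrL{\Glo}{\Orb}$, so this structure exists and is unique. Alternatively, uniqueness can be seen directly: by the parametrized form of \Cref{ex:S-spaces} an $\Orb$-cocontinuous $T$-functor out of $\Spc_\bullet$ is determined by the image of the generating global point, so any colimit-preserving tensoring restricting to $\id_\CC$ along this point is forced, and one checks that the module constructed above does restrict to $\id_\CC$.

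For the pointed statement, the argument is the same once one knows that $\PrLast{\Glo}{\Orb}$ carries a symmetric monoidal structure with unit the fiberwise pointification $\Spc_{\bullet,\ast}\coloneqq(\Spc_\bullet^\Orb)_\ast$, whose value at $\CB G$ is $(\Spc_G)_\ast$ by \Cref{ex:eq_spaces}. For this I would check that fiberwise pointification $\CC\mapsto\CC_\ast$ defines a smashing localization $\PrL{\Glo}{\Orb}\to\PrLast{\Glo}{\Orb}$: pointification is a smashing localization of presentable categories which is compatible with the restriction functors, with the left adjoints $p_!$ for $p\in\bbF_\Orb$, and with base change, so it lifts to a localization of $\PrL{\Glo}{\Orb}$; consequently $\PrLast{\Glo}{\Orb}\simeq\Mod_{\Spc_{\bullet,\ast}}(\PrL{\Glo}{\Orb})$ as symmetric monoidal categories, and $\Spc_{\bullet,\ast}$ is the unit. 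Equivalently, $\Spc_{\bullet,\ast}$ is the free pointed $\Orb$-presentable global category on a point, since $\Hom_{\PrLast{\Glo}{\Orb}}(\Spc_{\bullet,\ast},\CC)\simeq\Hom_{\PrL{\Glo}{\Orb}}(\Spc_\bullet^\Orb,\CC)\simeq\core(\Gamma\CC)$ for pointed $\CC$, by adjunction and \Cref{ex:S-spaces}. Now the first two paragraphs apply verbatim with $\Spc_\bullet$ replaced by $\Spc_{\bullet,\ast}$.

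The main obstacle is the first step in each case: pinning down the closed symmetric monoidal structure on $\uPrL{\Glo}{\Orb}$ with unit $\Spc_\bullet^\Orb$, and, in the pointed setting, verifying that fiberwise pointification really does preserve the partial-presentability conditions — fiberwise presentability, existence of the left adjoints $p_!$ for $p\in\bbF_\Orb$, and base change — so that it is a smashing localization in the $\Orb$-presentable world. Granting these inputs, producing the action map and establishing uniqueness are formal consequences of the theory of modules over the unit object.
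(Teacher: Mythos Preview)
Your approach is essentially the same as the paper's: both recognize the tensoring as the canonical module structure over the monoidal unit of a symmetric monoidal structure on $\Orb$-cocomplete global categories, with $\Spc_\bullet$ as that unit. The paper simply cites \cite{MW22}*{Corollary 8.2.5} and \cite{MW22}*{Remark 8.2.6} for the monoidal structure and the identification of the unit (working in $\Cat_{\Glo}^{\Orb\textup{-cc}}$ rather than $\PrL{\Glo}{\Orb}$, which makes no difference here), and for the pointed statement it invokes the non-parametrized analogue \cite{HA}*{Proposition 4.8.2.11} rather than spelling out the smashing-localization argument you give; so the ``main obstacle'' you flag is handled in the paper by a direct citation to \cite{MW22}.
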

	
	\begin{proof}
		By~\cite{MW22}*{Corollary 8.2.5} the category of $\Spc_\bullet$-cocomplete global categories $\Cat_{\Glo}^{\Orb\textup{-cc}}$ admits a symmetric monoidal structure such that $\Orb$-cocontinuous global functors $\CC\otimes \CD\rightarrow \CE$ are equivalent to global functors $\CC\times \CD\rightarrow \CE$ which are $\Orb$-cocontinuous in each variable. Moreover by~\cite{MW22}*{Remark 8.2.6} $\Spc_\bullet$ is the unit of this category, and therefore every object in $\PrL{\Glo}{\Orb}$ inherits a tensoring by $\Spc_\bullet$. This shows the first statement. The second statement follows analogously to \cite{HA}*{Proposition 4.8.2.11}.
	\end{proof}
	
	\begin{remark}\label{rem:Orb_pres_coh_tensoring}
		The tensoring of $\CC$ by $\Spc_\bullet$ implies in particular that each category $\CC_X$ is tensored by $\Spc_X$. Furthermore given a map $f\colon X\rightarrow Y$ in $\bbF_{\gl}\coloneqq \bbF_{\Glo}$, the functor  $f^*\colon \CC_Y\rightarrow \CC_X$ is canonically $\Spc_Y$-linear, where $\CC_X$ is tensored over $\Spc_Y$ by restricting along the functor $f^*\colon \Spc_Y\rightarrow \Spc_X$.
		
		An analogous statement holds for the tensoring of a pointed equivariantly presentable global categories $\CC$ by $\Spc_{\bullet,\ast}$.
	\end{remark}
	
	\begin{remark}\label{rem:iota_shriek_module_map}
		Now suppose that $f\colon X\rightarrow Y$ in $\bbF_{\Orb}$ is a faithful functor. Given a equivariantly presentable global category $\CC$, the left adjoint $f_!\colon \CC_X\rightarrow \CC_Y$  of $f^*$ canonically inherits the structure of an oplax $\Spc_Y$-linear functor. Contained in the statement that the tensoring preserves $\Orb$-colimits in each variable is the fact that $f_!$ with this oplax $\Spc_Y$-linear structure is in fact a strong $\Spc_Y$-linear functor. Informally this means that the projection formula holds. Given this one can compute that the tensoring of $\Spc_G$ on $\CC_G$ is given by colimit extending the assignment
		\[\Orb_G\times \CC_G\rightarrow \CC_G, \quad (\iota \colon H\hookrightarrow G,\,C)\mapsto \iota_!\iota^*(C).\] 
		Once again an analogous statement holds for pointed equivariantly presentable global categories.
	\end{remark}
	
	\begin{definition}\label{def:aug_rep}
		We define a parametrized subcategory $\bbS^{\rep}$ of $\Spc_\bullet$ by letting $\bbS^{\rep}_G$ be the full subcategory spanned by the representation spheres $S^V = V\cup \{\infty\}$, where $V$ is any finite dimensional orthogonal representation of $G$. 
	\end{definition}
	
	\begin{definition}
		We define $\PrLVst{\Glo}{\Orb}$ to be the full subcategory of $\PrLast{\Glo}{\Orb}$ spanned by the objects $\CC$ such that $S^V\otimes(-)\colon \CC_G\rightarrow \CC_G$ is an equivalence for all $\CB G\in \Glo$ and all $S^V\in \bbS^{\rep}_G$. We call such global categories \emph{$\Rep$-stable}.
	\end{definition}
	
	\subsection{Formal Inversions}
	
	Given a equivariantly presentable global category $\CC$ we would like to construct the initial global category under $\CC$ which is $\Rep$-stable. In other words, we would like to understand the process of (Rep-)stabilizing equivariantly presentable global categories. Just as stabilizing ordinary categories is given by inverting the action of $S^1$, the topological sphere, the stabilization of an equivariantly presentable global category will be obtained by inverting the action of the representation spheres pointwise. We first recall the relevant definitions.
	
	\begin{definition}
		Let $\CD \in \Mod_{\CC}(\PrLun)$ be a presentable category tensored over another presentable symmetric monoidal category $\CC$. Furthermore fix a collection of objects $S\in \CC$. We say $\CD$ is \textit{$S$-local} if for every $X\in S$ the functor 
		\[X\otimes- \colon \CD \rightarrow \CD\]
		is an equivalence. We write $\Mod_{\CC}(\PrLun)^{S\textup{-loc}}$ for the full subcategory of $\Mod_{\CC}(\PrLun)$ spanned by the $S$-local objects.
	\end{definition}
	
	\begin{proposition}[\cite{Ro15}*{Proposition 4.10}]\label{prop:Robalo}
		The inclusion $\Mod_{\CC}(\PrLun)^{S\textup{-loc}}\subset \Mod_{\CC}(\PrLun)$ admits a symmetric monoidal left adjoint, which we denote by $\CD\mapsto \CD[S^{-1}]$. 
	\end{proposition}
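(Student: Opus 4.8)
The plan is to follow Robalo's strategy and break the statement into three parts: build the candidate left adjoint as a filtered colimit, recognize it as a reflective localization onto the subcategory of $S$-local modules, and upgrade this to a symmetric monoidal localization by exhibiting it as smashing. First recall that $\Mod_{\CC}(\PrLun)$ is a presentably symmetric monoidal category under the relative tensor product $\otimes_{\CC}$, with unit $\CC$, and that it admits all small colimits, created by the forgetful functor to $\PrLun$; such colimits may be analyzed via the description of colimits along left adjoints as limits along the associated right adjoints (\cite{HTT}). For $\CD\in\Mod_{\CC}(\PrLun)$ I would then set
\[
\CD[S^{-1}]\coloneqq \colim_{f\in\N^{(S)}}\CD,
\]
the colimit over the filtered poset of finitely supported functions $f\colon S\to\N$, ordered componentwise, where the transition map associated to $f\leq f+e_X$ is the colimit-preserving functor $X\otimes(-)$; the symmetry of the tensor product on $\CC$ makes this assignment into a genuine functor $\N^{(S)}\to\Mod_{\CC}(\PrLun)$, and this simultaneous diagram avoids having to form tensor factors over $S$ that need not exist in $\CC$. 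Using the limit presentation over the right adjoints together with a cofinality argument in $\N^{(S)}$, one checks that $X\otimes(-)$ becomes an equivalence on $\CD[S^{-1}]$ for every $X\in S$, so $\CD[S^{-1}]$ is $S$-local, and that $\CD\mapsto\CD[S^{-1}]$ is functorial.

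Next I would verify that the canonical map $\eta_{\CD}\colon\CD\to\CD[S^{-1}]$ exhibits $\CD[S^{-1}]$ as a reflection of $\CD$ in the full subcategory $\Mod_{\CC}(\PrLun)^{S\textup{-loc}}$. For an $S$-local module $\CE$ one has
\[
\Map_{\Mod_{\CC}(\PrLun)}\bigl(\CD[S^{-1}],\CE\bigr)\simeq\lim_{f\in(\N^{(S)})^{\op}}\Map_{\Mod_{\CC}(\PrLun)}(\CD,\CE),
\]
and each transition map of this cofiltered system is precomposition with $X\otimes(-)$, which by $\CC$-linearity agrees with postcomposition with $X\otimes(-)$ and is therefore an equivalence because $\CE$ is $S$-local; hence the limit collapses to $\Map(\CD,\CE)$. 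Combined with the observation that $\CE[S^{-1}]\simeq\CE$ whenever $\CE$ is already $S$-local (the diagram is then constant with invertible transition maps), this identifies $(-)[S^{-1}]$ as left adjoint to the inclusion of $S$-local modules.

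Finally, for the symmetric monoidal refinement I would show the localization is smashing. Since $-\otimes_{\CC}\CD$ preserves colimits, applying it to $\CC[S^{-1}]=\colim_{\N^{(S)}}\CC$ yields a natural equivalence $\CD\otimes_{\CC}\CC[S^{-1}]\simeq\CD[S^{-1}]$, identifying $(-)[S^{-1}]$ with $-\otimes_{\CC}\CC[S^{-1}]$. It follows that the class of $S$-local equivalences, i.e.\ the maps inverted by $(-)[S^{-1}]$, is stable under $-\otimes_{\CC}\CE$ for every module $\CE$, because $(g\otimes_{\CC}\CE)\otimes_{\CC}\CC[S^{-1}]\simeq(g\otimes_{\CC}\CC[S^{-1}])\otimes_{\CC}\CE$. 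By \cite{HA}*{Proposition 2.2.1.9} this stability is exactly the condition needed to equip $\Mod_{\CC}(\PrLun)^{S\textup{-loc}}$ with a symmetric monoidal structure for which the left adjoint $(-)[S^{-1}]$ is symmetric monoidal.

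I expect the first step to be the main obstacle: carefully controlling the filtered colimit inside $\Mod_{\CC}(\PrLun)$, in particular verifying that the transition functors $X\otimes(-)$ genuinely become equivalences in the colimit — which forces one to pass to the limit presentation over the right adjoints and run a cofinality argument in $\N^{(S)}$ — and organizing the simultaneous inversion of a possibly infinite set $S$ rather than a single object. Once that colimit is under control, everything downstream is formal.
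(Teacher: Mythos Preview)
The paper does not give its own proof here: the proposition is stated with a direct citation to \cite{Ro15}*{Proposition 4.10} and no argument is supplied. Your proposal is essentially a reconstruction of Robalo's proof, so in that sense it agrees with the paper's ``approach'' (deferring to Robalo) while filling in the content.

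Your outline is correct and matches the standard argument: build $\CD[S^{-1}]$ as a filtered colimit of copies of $\CD$ along the functors $X\otimes(-)$, verify $S$-locality and the reflection property via the limit description of mapping spaces out of a colimit, and then observe that the localization is smashing to get the symmetric monoidal upgrade via \cite{HA}*{Proposition 2.2.1.9}. The one place to be careful, which you yourself flag, is turning the assignment $f\mapsto \CD$ with transition maps $X\otimes(-)$ into an honest functor out of $\N^{(S)}$: this requires the coherence of the symmetry isomorphisms, and in the $\infty$-categorical setting one typically handles it either by recognizing $\N^{(S)}$ as the free commutative monoid on $S$ and using the $E_\infty$-structure on $\CC$, or by first reducing to finite $S$ (where one may invert the single object $\bigotimes_{X\in S}X$) and then passing to a filtered colimit over finite subsets of $S$. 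Either route works; just be explicit about which one you take.
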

	
	To invert the action of representation spheres pointwise in a global category, requires that inverting the action of a collection of objects is suitably functorial as the category we are tensored ober changes. To capture this functoriality we make the following definitions.
	
	\begin{definition}
		We define $\Cat_{\infty,\aug}$ to be the full subcategory of $\Fun([1],\Cat)$ spanned by the fully faithful functors $S\hookrightarrow \CC$ such that $S$ is a small category. The pair $(\CC,S)$ is called an augmented category.
	\end{definition}
	
	\begin{definition}
		We define the category $\CAlg(\PrLun)_{\aug}$ of augmented presentable symmetric monoidal categories as the following pullback:
		\[
		\begin{tikzcd}
			\CAlg(\PrLun)_{\aug} \arrow[r] \arrow[d] & \CAlg(\PrLun) \arrow[d] \\
			\Cat_{\infty,\aug} \arrow[r] &	\Cat_{\infty}
		\end{tikzcd}	
		\]
	\end{definition}
	
	\begin{definition}
		We write $\Mod(\PrLun)$ for the cartesian unstraightening of the functor 
		\[\Mod_{\bullet}(\PrLun)\colon \CAlg(\PrLun)^{\op}\rightarrow \Cat, \quad \CC \mapsto \Mod_\CC(\PrLun).\]
		Objects of $\Mod(\PrLun)$ consist of a pair $(\CC,\CD)$ of a presentable symmetric monoidal category $\CC$ and a presentable category $\CD$ tensored over $\CC$.
	\end{definition}
	
	\begin{definition}
		We define $\Mod(\PrLun)_{\aug}$ to be the pullback
		\[
		\begin{tikzcd}
			\Mod(\PrLun)_{\aug} \arrow[r] \arrow[d] & \Mod(\PrLun) \arrow[d] \\
			\CAlg(\PrLun)_{\aug} \arrow[r] &	\CAlg(\PrLun).
		\end{tikzcd}	
		\] 
		
		We define $\Mod(\PrLun)_{\aug^{-1}}$ to be the full subcategory of $\Mod(\PrLun)_{\aug}$ spanned by those triples $(\CC,S,\CD)$ such that for every $X\in S$, $X\otimes -\colon \CD\rightarrow \CD$ is an equivalence. 
	\end{definition}
	
	\begin{theorem}
		The inclusion $\Mod(\PrLun)_{\aug^{-1}}\hookrightarrow \Mod(\PrLun)_{\aug}$ admits a left adjoint
		\[\bbI\colon \Mod(\PrLun)_{\aug} \rightarrow \Mod(\PrLun)_{\aug^{-1}}.\] Furthermore this left adjoint sends a triple $(\CC,S,\CD)$ to the triple $(\CC,S,\CD[S^{-1}])$.
	\end{theorem}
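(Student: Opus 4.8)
The plan is to produce $\bbI$ by applying Robalo's localization $(-)[S^{-1}]$ of \Cref{prop:Robalo} fiberwise over $\CAlg(\PrLun)_{\aug}$, and then to show that these fiberwise reflectors glue to a reflector on the total categories.

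First I would fix the fibrational picture. Since $\Mod(\PrLun)\to\CAlg(\PrLun)$ is by definition the cartesian unstraightening of $\Mod_\bullet(\PrLun)$, its pullback $\Mod(\PrLun)_{\aug}\to\CAlg(\PrLun)_{\aug}$ is again a cartesian fibration, classified by the functor $(\CC,S)\mapsto\Mod_\CC(\PrLun)$ in which a morphism $F\colon(\CC,S)\to(\CC',S')$ --- that is, a symmetric monoidal left adjoint $F\colon\CC\to\CC'$ carrying $S$ into $S'$ --- acts by restriction of scalars $F^*\colon\Mod_{\CC'}(\PrLun)\to\Mod_\CC(\PrLun)$; a cartesian edge over $F$ thus runs from $(\CC,F^*\CD')$ to $(\CC',\CD')$. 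The crucial point to verify is that $\Mod(\PrLun)_{\aug^{-1}}$ is closed under cartesian pullback: if $\CD'$ is $S'$-local then $F^*\CD'$ is $S$-local. This is immediate upon unwinding the module structure, since for $X\in S$ the endofunctor $X\otimes-$ of the restricted module $F^*\CD'$ is the endofunctor $F(X)\otimes-$ of $\CD'$ with $F(X)\in S'$, hence an equivalence. It follows that $\Mod(\PrLun)_{\aug^{-1}}\to\CAlg(\PrLun)_{\aug}$ is itself a cartesian fibration with fiber $\Mod_\CC(\PrLun)^{S\textup{-loc}}$ over $(\CC,S)$ and that the inclusion into $\Mod(\PrLun)_{\aug}$ preserves cartesian edges.

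The gluing is then formal. By \Cref{prop:Robalo}, for each $(\CC,S)$ the fiber inclusion $\Mod_\CC(\PrLun)^{S\textup{-loc}}\subseteq\Mod_\CC(\PrLun)$ is reflective with reflector $\CD\mapsto\CD[S^{-1}]$, and as $\CD[S^{-1}]$ is presentable and $\CC$-linear, the triple $(\CC,S,\CD[S^{-1}])$ indeed lies in $\Mod(\PrLun)_{\aug^{-1}}$. To see that the inclusion of total categories is reflective with these reflectors, it suffices by \cite{HTT}*{Proposition 5.2.7.8} to check that the unit $u\colon\CD\to\CD[S^{-1}]$, viewed as a morphism $(\CC,S,\CD)\to(\CC,S,\CD[S^{-1}])$ lying over $\id_{(\CC,S)}$, is a $\Mod(\PrLun)_{\aug^{-1}}$-localization. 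Using the formula for mapping spaces in a cartesian fibration (\cite{HTT}*{Proposition 2.4.4.2}), for any $(\CC',S',\CD')\in\Mod(\PrLun)_{\aug^{-1}}$ we have
\[
\Map\big((\CC,S,\CD[S^{-1}]),(\CC',S',\CD')\big)\;\simeq\;\coprod_{F\colon(\CC,S)\to(\CC',S')}\Map_{\Mod_\CC(\PrLun)}\big(\CD[S^{-1}],F^*\CD'\big),
\]
and since each $F^*\CD'$ is $S$-local by the previous paragraph, precomposition with $u$ identifies each summand with $\Map_{\Mod_\CC(\PrLun)}(\CD,F^*\CD')$, hence the whole mapping space with $\Map\big((\CC,S,\CD),(\CC',S',\CD')\big)$. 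This simultaneously shows that $\bbI$ lies over $\CAlg(\PrLun)_{\aug}$ and is given on objects by $(\CC,S,\CD)\mapsto(\CC,S,\CD[S^{-1}])$.

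I expect the only real difficulty to be the bookkeeping: pinning down the variance so that base change along $F$ is restriction of scalars and the mapping-space formula takes the displayed shape, together with the short but essential check that restriction of scalars preserves locality. Everything beyond that is a formal consequence of \Cref{prop:Robalo}. An equivalent, more conceptual phrasing of the gluing step would be that a natural transformation of functors $\CAlg(\PrLun)_{\aug}^{\op}\to\Cat$ which is objectwise a reflective localization and compatible with the transition functors unstraightens to a reflective localization of total categories --- the mapping-space computation above being precisely that verification.
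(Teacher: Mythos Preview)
Your proposal is correct and follows essentially the same approach as the paper: both establish that $\Mod(\PrLun)_{\aug}$ and $\Mod(\PrLun)_{\aug^{-1}}$ are cartesian fibrations over $\CAlg(\PrLun)_{\aug}$ via the same closure-under-restriction argument, then glue the fiberwise reflectors of \Cref{prop:Robalo}. The only difference is in the gluing step: the paper invokes \cite{HA}*{Proposition 7.3.2.6} as a black box, whereas you unpack that statement by hand using \cite{HTT}*{Proposition 5.2.7.8} and the mapping-space formula \cite{HTT}*{Proposition 2.4.4.2}.
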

	
	\begin{proof}
		Note that both $\Mod(\PrLun)_{\aug}$ and $\Mod(\PrLun)_{\aug^{-1}}$ are the total category of a cartesian fibration over $\CAlg(\PrLun)_{\aug}$: The first because it is a pullback of $\Mod(\PrLun)$. For the second observe that given an object $(\CC',T,\CD)\in \Mod(\PrLun)_{\aug^{-1}}$ and a morphism $F\colon (\CC,S)\rightarrow (\CC',T)$ in $\CAlg(\PrLun)_{\aug}$, an object $X\in S$ acts on $F^*\CD$ as $F(X)$ acts on $\CC$. Because the functor $F$ preserves the augmentation, $F(X)$ is in $T$ and so the action is invertible. This shows that $F^*(\CC,D,\CD)$ is again in $\Mod(\PrLun)_{\aug^{-1}}$. In other words, $\Mod(\PrLun)_{\aug^{-1}}$ is a full subcategory of $\Mod(\PrLun)_{\aug}$ closed under cartesian pushforward, and so a cartesian fibration again. 
		
		Note that this also shows that the inclusion $\Mod(\PrLun)_{\aug^{-1}}\rightarrow \Mod(\PrLun)_{\aug}$ is a map of cartesian fibrations. Given this, the statement is an application of \cite{HA}*{Proposition 7.3.2.6}, where the fiberwise left adjoints are given by \Cref{prop:Robalo}. The final statement is clear.
	\end{proof}
	
	The following proposition shows that $\bbI$ preserves products, in a suitable sense.
	
	\begin{proposition}\label{prop:inverting_objects_preserves_products}
		Consider a set of objects $(\CC_i,S_i,\CD_i)$ in $\Mod(\PrLun)_{\aug}$. Then there is an equivalence
		\[
		\bbI(\prod\CC_i,\prod S_i,\prod\CD_i)\simeq (\prod\CC_i,\prod S_i,\prod \CD_i[S_i^{-1}]).
		\]
	\end{proposition}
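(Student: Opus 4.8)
Recall from the theorem preceding this proposition that $\bbI$ sends a triple $(\CC,S,\CD)$ to $(\CC,S,\CD[S^{-1}])$. Applying this to the product triple and comparing third coordinates, the claim reduces to producing an equivalence of $\prod_i\CC_i$-modules
\[
\Bigl(\prod_i\CD_i\Bigr)\Bigl[\bigl(\prod_i S_i\bigr)^{-1}\Bigr]\;\simeq\;\prod_i\bigl(\CD_i[S_i^{-1}]\bigr).
\]
The plan is to exhibit both sides as \emph{the same} filtered colimit. Recall that the construction of \cite{Ro15} underlying \Cref{prop:Robalo} presents $\CD[S^{-1}]$ as the filtered colimit $\colim_{w\in W(S)}\CD$ in $\Mod_\CC(\PrLun)$, where $W(S)$ is the poset of finitely supported functions $\operatorname{ob}(S)\to\N$ under the pointwise order, and the transition map attached to $w\le w'$ is tensoring over $\CC$ by $\bigotimes_x x^{\otimes(w'-w)(x)}$.

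Two ingredients enter. First, colimits in $\PrLun$ commute with arbitrary small products: any diagram in $\PrLun$ consists of left adjoints, so its colimit is computed by passing to the diagram of right adjoints and forming the limit in $\widehat{\Cat}$ (\cite{HTT}*{Proposition 5.5.3.13}), and limits commute with products; since for an algebra $R$ the forgetful functor $\Mod_R(\PrLun)\to\PrLun$ is monadic with colimit-preserving monad $R\otimes(-)$, it is conservative and preserves colimits and products, so the same commutation holds inside $\Mod_R(\PrLun)$. Second — assuming, as we may since $S^0\in\bbS^{\rep}_G$ in all our applications, that each $S_i$ is nonempty — for every index $i$ the ``$i$-th marginal'' map of posets
\[
\phi_i\colon W\bigl(\prod_j S_j\bigr)\longrightarrow W(S_i),\qquad f\longmapsto\Bigl(a\mapsto\sum_{c\,:\,c_i=a}f(c)\Bigr)
\]
is cofinal: it is monotone, the comma subposets $\{f:\phi_i(f)\ge s\}$ are closed under pointwise maxima and hence filtered, and they are nonempty because, given $s\in W(S_i)$, picking one tuple lying over each object in the finite support of $s$ — with the appropriate multiplicity and arbitrary remaining coordinates, possible since the $S_j$ are nonempty — produces an $f$ with $\phi_i(f)\ge s$.

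Granting these, cofinality of $\phi_i$ rewrites $\CD_i[S_i^{-1}]\simeq\colim_{f\in W(\prod_j S_j)}\CD_i$ with transition map for $f\le f'$ given by tensoring over $\CC_i$ by $\bigotimes_{a\in\operatorname{ob}(S_i)}a^{\otimes\phi_i(f'-f)(a)}$; taking products over $i$ and using the first ingredient identifies $\prod_i\CD_i[S_i^{-1}]$ with $\colim_{f\in W(\prod_j S_j)}\prod_i\CD_i$, the transition for $f\le f'$ being tensoring over $\prod_i\CC_i$ by the object whose $i$-th component is $\bigotimes_{a\in\operatorname{ob}(S_i)}a^{\otimes\phi_i(f'-f)(a)}$. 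On the other hand, in the colimit presentation of $\bigl(\prod_i\CD_i\bigr)\bigl[(\prod_i S_i)^{-1}\bigr]$ the transition for $f\le f'$ is tensoring over $\prod_i\CC_i$ by $\bigotimes_{c\in\operatorname{ob}(\prod_i S_i)}c^{\otimes(f'-f)(c)}$, whose $i$-th component, after regrouping the finitely many tensor factors according to the value of their $i$-th coordinate, is exactly $\bigotimes_{a\in\operatorname{ob}(S_i)}a^{\otimes\phi_i(f'-f)(a)}$. Hence the two colimit diagrams coincide on the nose, and so do their colimits, compatibly with the $\prod_i\CC_i$-module structures.

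The point to be careful about — and the only real obstacle — is that one must \emph{not} try to compare via $\prod_i W(S_i)$: the evident map $W(\prod_i S_i)\to\prod_i W(S_i)$ is \emph{not} cofinal, since its image omits tuples whose supports have unbounded size across $i$; it is precisely the cofinality of each individual marginal $\phi_i$ that lets one rewrite $\prod_i\CD_i[S_i^{-1}]$ as a colimit over the \emph{single} filtered poset $W(\prod_i S_i)$ before invoking that colimits commute with products. (Alternatively, granting the identification $\Mod_{\prod_i\CC_i}(\PrLun)\simeq\prod_i\Mod_{\CC_i}(\PrLun)$, the proposition follows more quickly: under it ``$\prod_i S_i$-locality'' becomes ``$S_i$-locality in each factor'', and a reflection onto a product of reflective subcategories is the product of the reflections.)
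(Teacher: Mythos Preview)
Your main argument is correct, but it takes a genuinely different route from the paper. The paper's proof is exactly your parenthetical alternative: it simply invokes the equivalences
\[
\Mod_{\prod_i\CC_i}(\PrLun)\simeq\prod_i\Mod_{\CC_i}(\PrLun)
\quad\text{and}\quad
\Mod_{\prod_i\CC_i}(\PrLun)^{\prod_i S_i\text{-loc}}\simeq\prod_i\Mod_{\CC_i}(\PrLun)^{S_i\text{-loc}},
\]
from which the claim follows since a reflector onto a product of reflective subcategories is the product of the reflectors. Your primary argument instead unpacks Robalo's filtered-colimit model for $\CD[S^{-1}]$, proves cofinality of the marginal maps $\phi_i\colon W(\prod_j S_j)\to W(S_i)$, and uses that colimits in $\PrLun$ (hence in module categories over it) commute with products. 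This is more laborious but has the virtue of being completely explicit about the transition maps, and your warning about why one must \emph{not} route through $\prod_i W(S_i)$ is a nice diagnostic remark. The paper's approach, by contrast, avoids any colimit bookkeeping at the cost of relying on the module-category splitting as a black box. Note also that your nonemptiness hypothesis on the $S_i$ is genuinely needed for the statement as written (if some $S_{i_0}=\emptyset$ then $\prod_i S_i=\emptyset$ and the two sides disagree); the paper leaves this implicit, but it is satisfied in the only application since $S^0\in\bbS^{\rep}_G$.
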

	
	\begin{proof}
		This follows immediately from the equivalences 
		\[\Mod_{\prod \CC_i}(\PrLun)\simeq \prod\Mod_{\CC_i}(\PrLun) \quad \text{and} \quad \Mod_{\prod \CC_i}(\PrLun)^{\prod S_i -\textup{loc}}\simeq \prod\Mod_{\CC_i}(\PrLun)^{S_i-\textup{loc}}.\qedhere\]
	\end{proof}
	
	\subsection{Equivariantly presentable Rep-stabilization}
	
	We are now ready to construct the $\Rep$-stabilization of an equivariantly presentable global category. First we note that the observations of \Cref{rem:Orb_pres_coh_tensoring} extend to a coherent statement:
	
	\begin{proposition}\label{prop:Orb_pres_coh_tensoring}
		Let $\CC$ be a pointed equivariantly presentable global category. The functor $\CC\colon \bbF_{\gl}^{\op}\rightarrow \PrLun$ extends to a functor \[\CC\colon \bbF_{\gl}^{\op}\rightarrow \Mod(\PrLun), \quad X\mapsto (\Spc_{X,\ast},\CC_X).\]
	\end{proposition}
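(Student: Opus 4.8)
The plan is to produce the claimed functor as a \emph{section of a cartesian fibration}. Unwinding the definition of $\Mod(\PrLun)$ as the cartesian unstraightening of $\Mod_\bullet(\PrLun)\colon\CAlg(\PrLun)^{\op}\to\Cat$, a functor $\bbF_{\gl}^{\op}\to\Mod(\PrLun)$ whose projection to $\CAlg(\PrLun)$ is $\Spc_{\bullet,\ast}\colon X\mapsto\Spc_{X,\ast}$ is the same datum as a section of the cartesian fibration $\Spc_{\bullet,\ast}^*\Mod(\PrLun)\to\bbF_{\gl}^{\op}$, whose fibre over $X$ is $\Mod_{\Spc_{X,\ast}}(\PrLun)$ and whose cartesian edges over $f\colon X\to Y$ implement restriction along $f^*\colon\Spc_{Y,\ast}\to\Spc_{X,\ast}$. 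I want the section which picks out $\CC_X$ fibrewise, compatibly with the fact (\Cref{rem:Orb_pres_coh_tensoring}) that $f^*\colon\CC_Y\to\CC_X$ is $\Spc_{Y,\ast}$-linear. First, $\Spc_{\bullet,\ast}$ is a commutative algebra object in the symmetric monoidal category $\Cat_{\Glo}^{\Orb\textup{-cc}}$ of \cite{MW22}: it is the global category of pointed equivariant spaces, it is cartesian closed fibrewise with smash product compatible with restriction, and indeed it is the unit of the pointed variant of $\Cat_{\Glo}^{\Orb\textup{-cc}}$, as used in the proof of \Cref{bullet_spaces_initial}. By \Cref{bullet_spaces_initial}, the pointed equivariantly presentable $\CC$ carries a unique tensoring over $\Spc_{\bullet,\ast}$ which is $\Orb$-cocontinuous in each variable; concretely this exhibits $\CC$ as an object of $\Mod_{\Spc_{\bullet,\ast}}(\Cat_{\Glo}^{\Orb\textup{-cc}})$.

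Next, I would pass to underlying diagrams. The ``underlying $\bbF_{\gl}^{\op}$-diagram'' functor $\Cat_{\Glo}^{\Orb\textup{-cc}}\to\Fun(\bbF_{\gl}^{\op},\PrLun)$, with the target equipped with the pointwise Lurie tensor product, is lax symmetric monoidal: the structure maps $\CC_X\otimes_{\PrLun}\CD_X\to(\CC\otimes\CD)_X$ are obtained by restricting the universal ``$\Orb$-cocontinuous in each variable'' functor $\CC\times\CD\to\CC\otimes\CD$ to the fibre over $X$ and using that this fibre functor is colimit preserving in each variable, hence factors through $\CC_X\otimes_{\PrLun}\CD_X$. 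Under this lax symmetric monoidal functor, $\Spc_{\bullet,\ast}$ is sent to the diagram of algebras $X\mapsto(\Spc_{X,\ast},\wedge)$ and $\CC$ is sent to a module over it in $\Fun(\bbF_{\gl}^{\op},\PrLun)$ with the pointwise tensor. Finally, for any symmetric monoidal $\CV$, small $\infty$-category $\CB$ and diagram of algebras $A\colon\CB\to\CAlg(\CV)$, the category of $A$-modules in $\Fun(\CB,\CV)$ with the pointwise tensor is canonically equivalent to the category of functors $\CB\to\Mod(\CV)$ lifting $A$ along $\Mod(\CV)\to\CAlg(\CV)$; this is checked fibrewise over $\CAlg(\CV)$, the case $\CB=\ast$ being the definition of $\Mod_A(\CV)$ as a fibre. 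Applying this with $\CV=\PrLun$, $\CB=\bbF_{\gl}^{\op}$ and $A=\Spc_{\bullet,\ast}$ converts the module of the previous step into the desired functor $\CC\colon\bbF_{\gl}^{\op}\to\Mod(\PrLun)$, and tracing through the equivalences identifies its value at $X$ with $(\Spc_{X,\ast},\CC_X)$ and its value on $f$ with $f^*$ together with its $\Spc_{Y,\ast}$-linear structure.

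The main obstacle is the lax symmetric monoidality claimed in the third step: it requires knowing the (parametrized, Day-convolution-type) monoidal structure of \cite{MW22} on $\Cat_{\Glo}^{\Orb\textup{-cc}}$ well enough to see that restriction to a fibre, followed by the universal colimit-preserving-in-each-variable factorization, assembles into coherent lax structure maps — one can extract this from the description of the tensor product via ``$\Orb$-cocontinuous in each variable'' functors in \cite{MW22}. A secondary, purely bookkeeping point is the variance juggling in the last step (cartesian versus cocartesian unstraightening) and checking that the section one lands on is genuinely the one dictated by the $\Orb$-linearity of restriction recorded in \Cref{rem:iota_shriek_module_map} and \Cref{rem:Orb_pres_coh_tensoring}; everything else is formal. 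An alternative, more hands-on route would bypass the lax monoidal packaging and build the section of $\Spc_{\bullet,\ast}^*\Mod(\PrLun)\to\bbF_{\gl}^{\op}$ directly from the uniqueness of the tensoring in \Cref{bullet_spaces_initial}, but the module-category formalism above is cleaner to write.
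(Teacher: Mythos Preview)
Your overall strategy coincides with the paper's: both start from \Cref{bullet_spaces_initial} to exhibit $\CC$ as a $\Spc_{\bullet,\ast}$-module in a category of global categories, and then ``unpack'' this module structure into a functor $\bbF_{\gl}^{\op}\to\Mod(\PrLun)$. Your final step (modules over a diagram of algebras in a functor category $\simeq$ lifts to $\Mod(\CV)$) is essentially the content of \cite{LNP}*{Theorem~4.13$^{\op}$}, which the paper invokes by name.

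The execution differs in one substantive way. The paper works in $\Cat_{\Glo}=\Fun(\Glo^{\op},\Cat)$ with its \emph{cartesian} monoidal structure, observes that this is an oplax limit of copies of $(\Cat,\times)$, and then cites \cite{LNP}*{Theorem~5.10$^{\op}$} to pass from $\Mod_{\Spc_{\bullet,\ast}}(\Cat_{\Glo})$ to $\oplaxlim \Mod_{\Spc_{\bullet,\ast}}(\Cat)$; the upgrade to $\PrLun$ is then a \emph{property} checked after the fact. You instead work with the MW22 tensor product on $\Cat_{\Glo}^{\Orb\textup{-cc}}$ and need a lax symmetric monoidal ``underlying diagram'' functor into $\Fun(\bbF_{\gl}^{\op},\PrLun)$ with the pointwise Lurie tensor product. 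As you yourself flag, establishing this lax monoidality coherently is the nontrivial point of your argument; the paper's choice of the cartesian structure on $\Cat$ precisely sidesteps this, since product-preservation of evaluation is automatic and the relevant commutation of $\Mod$ with (op)lax limits is already packaged in \cite{LNP}. Your route would work, but it trades one black box (the LNP theorems) for another (the coherent lax monoidality of the forgetful functor from the parametrized Lurie tensor product), and the latter is not in the paper's toolkit.
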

	
	\begin{proof}
		By \Cref{bullet_spaces_initial} $\CC$ is canonically an object in $\Mod_{\Spc_{\bullet,\ast}}(\Cat_{\Glo})$. We note that $\Cat_{\Glo}$, as a functor category on $\Glo^{\op}$, is in particular an oplax limit of the constant $\Glo$-shaped diagram on $\Cat$. Therefore \cite{LNP}*{Theorem 5.10$^{\op}$} implies that $\CC$ gives an object in $\oplaxlim \Mod_{\Spc_{\bullet,\ast}} \Cat$. This is in turn equivalent to a functor $\Glo^{\op}\rightarrow \Mod(\Cat)$ by \cite{LNP}*{Theorem 4.13$^{\op}$}, where $\Mod(\Cat)$ is defined analogously to $\Mod(\PrLun)$. For this functor to factor through $\Mod(\PrLun)$ is a property, guaranteed by \Cref{bullet_spaces_initial}. We then limit extend this to a functor from $\bbF_{\gl}^{\op}$.
	\end{proof}
	
	\begin{remark}
		Note that \Cref{def:aug_rep} specifies a lift of the functor $\Spc_{\bullet,\ast}\colon \bbF_{\gl}^{\op}\rightarrow \CAlg(\PrLun)$ to a functor into $\CAlg(\PrLun)_{\aug}$. This in turn lifts the functor $\CC\colon \bbF_{\gl}^{\op}\rightarrow \Mod(\PrLun)$ of \Cref{prop:Orb_pres_coh_tensoring} to a functor into $\Mod(\PrLun)_{\aug}$.
	\end{remark}
	
	\begin{definition}\label{def:Stab_Orb}
		Let $\CC$ be a pointed equivariantly presentable global category. Postcomposing the lift of $\CC$ to a functor into $\Mod(\PrLun)_{\aug}$ with the functor $\bbI\colon \Mod(\PrLun)_{\aug}\rightarrow \Mod(\PrLun)_{\aug^{-1}}$ and then forgetting down to $\PrLun$ we obtain a new functor \[\Stab^{\Orb}(\CC)\colon \bbF_{\gl}^{\op}\rightarrow \PrLun,\quad X \mapsto \CC_X[(\mathbb{S}^{\rep}_X)^{-1}].\] By \Cref{prop:inverting_objects_preserves_products} this is again a global category. Therefore $\Stab^{\Orb}$ defines a functor $\PrLast{\Glo}{\Orb}\rightarrow \Cat_\Glo$.
	\end{definition}
	
	\begin{theorem}\label{thm:Stab_P}
		The functor $\Stab^{\Orb}$ lands in the subcategory $\PrLVst{\Glo}{\Orb}$, and is a left adjoint to the inclusion $\PrLVst{\Glo}{\Orb}\subset \PrLast{\Glo}{\Orb}$.
	\end{theorem}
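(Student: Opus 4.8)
\emph{Setup.} The plan is to verify the defining properties one at a time, the genuinely non-formal one being equivariant presentability. Write $L\colon \CC\rightarrow \Stab^{\Orb}(\CC)$ for the $T$-functor whose component at $\CB G$ is the localization $L_G\colon \CC_G\rightarrow \CC_G[(\bbS^{\rep}_G)^{-1}]$; it is the component at each $X$ of the unit of (the universal property of) $\bbI$ applied to the lift of $\CC$ appearing in \Cref{def:Stab_Orb}, forgotten to $\PrLun$. I will freely use the identification $\Stab^{\Orb}(\CC)_G=\CC_G[(\bbS^{\rep}_G)^{-1}]=\CC_G\otimes_{\Spc_{G,\ast}}\Sp_G$ (localizing a module is base change along the localization of the base algebra, and $\Sp_G=\Spc_{G,\ast}[(\bbS^{\rep}_G)^{-1}]$).

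\emph{Easy parts and the key observation.} By construction $\Stab^{\Orb}(\CC)$ is a functor $\bbF_{\gl}^{\op}\rightarrow\PrLun$, and it preserves finite products by \Cref{prop:inverting_objects_preserves_products}, so it is a global category; moreover each value is a module over the stable category $\Sp_G$, hence pointed. The argument for equivariant presentability rests on an elementary representation-theoretic observation: for a subgroup inclusion $\iota\colon H\hookrightarrow G$ every orthogonal $H$-representation $W$ is a summand of $\mathrm{res}^G_H\mathrm{ind}^G_H W$, so writing $\mathrm{res}^G_H\mathrm{ind}^G_H W\cong W\oplus W'$ and $V=\mathrm{ind}^G_H W$ we get $S^{\iota^\ast V}\simeq S^W\wedge S^{W'}$; using the symmetry of $\wedge$, if $S^{\iota^\ast U}\wedge(-)$ is invertible for every $G$-representation $U$, then $S^W\wedge(-)$ is invertible for every $H$-representation $W$. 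Hence for any $\Spc_{G,\ast}$-module $\CM$ one has $\CM[(\iota^\ast\bbS^{\rep}_G)^{-1}]=\CM[(\bbS^{\rep}_H)^{-1}]$; since maps in $\bbF_{\Orb}$ are, on connected pieces, subgroup inclusions up to conjugacy, this will be used throughout.

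\emph{Equivariant presentability and $\Rep$-stability.} Fix $p\colon X\rightarrow Y$ in $\bbF_{\Orb}$. By the projection formula (\Cref{rem:iota_shriek_module_map}), $p_!$ is strong $\Spc_{Y,\ast}$-linear, so $p_!\dashv p^\ast$ is an adjunction internal to $\Mod_{\Spc_{Y,\ast}}(\PrLun)$, with $\CC_X$ regarded as a $\Spc_{Y,\ast}$-module along $p^\ast$. As any functor of $\infty$-categories preserves internal adjunctions and left-adjointable squares, applying $(-)\otimes_{\Spc_{Y,\ast}}\Sp_Y$ produces an adjunction between $\CC_Y\otimes_{\Spc_{Y,\ast}}\Sp_Y=\Stab^{\Orb}(\CC)_Y$ and $\CC_X\otimes_{\Spc_{Y,\ast}}\Sp_Y=\CC_X[(p^\ast\bbS^{\rep}_Y)^{-1}]$, which by the key observation equals $\Stab^{\Orb}(\CC)_X$, and whose right adjoint is the restriction functor of $\Stab^{\Orb}(\CC)$; this provides the missing left adjoints. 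For Beck--Chevalley along a pullback square in $\bbF_{\gl}$ with $p,p'\in\bbF_{\Orb}$, one starts from the left-adjointable base-change square of $\CC$ (which lives in $\Mod_{\Spc_{Y,\ast}}(\PrLun)$), applies $(-)\otimes_{\Spc_{Y,\ast}}\Sp_Y$ to turn its left column into the adjunction just built and identify its left corners with the values of $\Stab^{\Orb}(\CC)$, and then post-composes along the right column with the remaining localization at $\bbS^{\rep}_{Y'}$ (performed over $\Spc_{Y',\ast}$), using that a $\Spc_{Y,\ast}$-linear colimit-preserving functor out of $\CC_X[(p^\ast\bbS^{\rep}_Y)^{-1}]$ is determined by restriction along $L_X$. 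This shows $\Stab^{\Orb}(\CC)\in\PrLast{\Glo}{\Orb}$; it lies in $\PrLVst{\Glo}{\Orb}$ since the canonical tensoring of \Cref{bullet_spaces_initial} is, by its uniqueness, the one coming from $\bbI$, on which the representation spheres act invertibly by construction.

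\emph{The adjunction, and the main obstacle.} The functor $L$ is $\Orb$-cocontinuous: each $L_G$ is colimit-preserving with a right adjoint, and the Beck--Chevalley mate $p_!^{\Stab}L_X\rightarrow L_Y p_!$ is an equivalence because $p_!^{\Stab}$ is the base change of $p_!$ along $\Spc_{Y,\ast}\rightarrow\Sp_Y$ and $L$ the corresponding base-change unit. Given $\CD\in\PrLVst{\Glo}{\Orb}$, any $\Orb$-cocontinuous $F\colon\CC\rightarrow\CD$ is automatically $\Spc_{\bullet,\ast}$-linear (\Cref{bullet_spaces_initial}), hence a natural transformation of $\Mod(\PrLun)_{\aug}$-valued functors; since $\CD$ already takes values in $\Mod(\PrLun)_{\aug^{-1}}$, the universal property of $\bbI$ yields a unique natural transformation $\overline{F}\colon\Stab^{\Orb}(\CC)\rightarrow\CD$ with $\overline{F}\circ L\simeq F$, and $\overline{F}$ inherits Beck--Chevalley from $F$ by the same ``determined by restriction along $L_X$'' argument, exhibiting $L$ as the unit of $\Stab^{\Orb}\dashv(\PrLVst{\Glo}{\Orb}\hookrightarrow\PrLast{\Glo}{\Orb})$. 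The hard part is the equivariant presentability of $\Stab^{\Orb}(\CC)$, which is not formal precisely because one has localized \emph{pointwise} at the a priori incomparable families $\bbS^{\rep}_G$; the key observation is exactly what reconciles them, letting a single base change $(-)\otimes_{\Spc_{Y,\ast}}\Sp_Y$ compute $\Stab^{\Orb}(\CC)$ simultaneously over $X$ and $Y$, with the remaining delicacy being the iterated localization forced by a Beck--Chevalley square whose two columns naturally live over the different algebras $\Spc_{Y,\ast}$ and $\Spc_{Y',\ast}$.
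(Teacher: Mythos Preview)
Your proof is correct and follows essentially the same strategy as the paper: both hinge on the representation-theoretic fact that every $H$-representation sphere is a retract of a restricted $G$-representation sphere (you via the unit $W\hookrightarrow \mathrm{res}^G_H\mathrm{ind}^G_H W$, the paper via the regular representation), combined with the projection formula making $p_!\dashv p^*$ an adjunction in $\Spc_{Y,\ast}$-modules, so that the relevant base change produces the stabilized adjunction. The adjunction statement is argued identically in both.

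The one place the two diverge is the Beck--Chevalley verification. You argue by a two-stage localization: first $(-)\otimes_{\Spc_{Y,\ast}}\Sp_Y$ handles the $Y$- and $X$-corners, then a further localization over $\Spc_{Y',\ast}$ fixes the $Y'$- and $X'$-corners. This is conceptually clean but, as you note, the second step is delicate because the two columns live over different base algebras, and you do not fully spell out why the further localization of the right column preserves the left-adjointability already obtained (in particular, why the induced $(p')_!$ after the second localization agrees with the one coming from the first). The paper instead takes a generators-and-relations approach: it reduces to checking the Beck--Chevalley map on objects of the form $S^{-V}\otimes Z$ with $Z$ in the image of the unstabilized category, uses $\Spc_{Y',\ast}$-linearity to strip off the $S^{-V}$, and then invokes the original Beck--Chevalley condition for $\CC$. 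Both arguments leave something to the reader (the paper omits a ``tedious diagram chase''), but the paper's reduction is more concrete and easier to complete than your two-stage module-theoretic argument.
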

	
	\begin{proof}
		The functor $\CC\rightarrow \Stab^{\Orb}(\CC)$ induced by the unit of $\bbI$ is by definition the initial natural transformation of left adjoints such that $\Stab^{\Orb}(\CC)$ is fiberwise presentable and the action of the representation spheres on $\Stab^{\Orb}(\CC)$ is invertible. Therefore it suffices to show that $\Stab^{\Orb}(\CC)$ is in $\PrL{\Glo}{\Orb}$ and that the extension of $F\colon \CC\rightarrow \CD$ to $F'\colon \Stab^{\Orb}(\CC)\rightarrow \CD$ preserves all equivariant colimits whenever $F$ does.
		
		First we show that for every map $\iota\colon X\rightarrow Y$ in $\bbF_{\Orb}$ the functor $\iota^*\colon \Stab^{\Orb}(\CC)_Y\rightarrow \Stab^{\Orb}(\CC)_X$ admits a left adjoint. For this the crucial input is the following property of $\bbS^{\mathrm{rep}}$: the restriction of the regular representation of $G$ to a subgroup $H$ is a multiple of the regular representation of $H$, and so every $H$-representation is a summand of the restriction of enough copies of the regular $G$-representation. This is obviously also true for more general maps $\iota\colon X\rightarrow Y$ in $\bbF_{\Orb}$. By \cite{Cnossen23}*{Lemma 2.22} we conclude that there exists an equivalence $\CC_X[(\bbS_Y^{\rep})^{-1}]\simeq \CC_X[(\bbS_X^{\rep})^{-1}]$ of $\Spc_{Y,\ast}$-modules. By \Cref{rem:iota_shriek_module_map} $\iota_!$ is a $\Spc_{Y,\ast}$-module map and so induces a functor
		\[\iota_!\colon \Stab^{\Orb}(\CC)_X\rightarrow \Stab^{\Orb}(\CC)_Y.\] Furthermore because $\iota_!\dashv \iota^*$ is an adjunction in $\CC_X$-modules, both the unit and counit are canonically $\Spc_{Y,\ast}$-linear natural transformations. These therefore also lift to natural transformations witnessing \[\iota_!\colon  \Stab^{\Orb}(\CC)_X\rightarrow \Stab^{\Orb}(\CC)_Y\] as a left adjoint to $\iota^*\colon \Stab^{\Orb}(\CC)_Y\rightarrow \Stab^{\Orb}(\CC)_X$ in $\Spc_{Y,\ast}[(\bbS^{\rep}_Y)^{-1}]$-modules.
		
		Next we show the required left adjointability conditions. Consider a square 
		
		\[\begin{tikzcd}
			{\CC_{Y'}[(\bbS^{\rep}_{Y'})^{-1}]} & {\CC_{X'}[(\bbS^{\rep}_{X'})^{-1}]} \\
			{\CC_Y[(\bbS^{\rep}_Y)^{-1}]} & {\CC_X[(\bbS^{\rep}_X)^{-1}]}
			\arrow["{g^*}", from=1-1, to=2-1]
			\arrow["{f^*}", from=1-2, to=2-2]
			\arrow["{j^*}", from=1-1, to=1-2]
			\arrow["{i^*}", from=2-1, to=2-2]
		\end{tikzcd}\]
		induced by a pullback square in $\bbF_{\gl}$ such that $i,j$ are faithful maps of groupoids, which we have to show is left adjointable. We first note that because all of the functors in this diagram preserve colimits, it suffices by~\cite{AI2023motivic}*{Lemma 1.5.1} to prove that the Beck-Chevalley transformation is an equivalence on objects of the form $S^{-V}\otimes Z$ for $Z\in \CC_{X'}$ and $S^{V}\in \bbS_{X'}^{\rep}$. Because $j^*$ induces a cofinal map between the augmentations we immediately see that it in fact suffices to prove this for objects of the form $S^{-j^* V}\otimes Z$, where $V$ is now a $Y'$-representation. One can show that the Beck-Chevalley transformation on $S^{-j^* V}\otimes Z$ is given by tensoring the Beck-Chevalley transformation of $X$ by $S^{-j^* V}$. More precisely we claim that the following diagram commutes:
		\[\hspace{-35.615pt}
		\begin{tikzcd}[column sep = large]
			{g^*j_!(S^{-j^*V}\otimes Z)} & {i_!i^*g^*j_!(j^*S^{-V}\otimes Z)} & {i_!f^*j^*j_!(j^*S^{-V}\otimes Z)} & {i_!f^*(S^{-j^*V}\otimes Z)} \\
			&&& {i_!(S^{-f^*j^*V}\otimes f^*Z)} \\
			{g^*(S^{-V}\otimes j_! Z)} &&& {i_!(S^{-i^*g^*V}\otimes f^*Z)} \\
			{S^{-g^*V}\otimes g^* j_!Z} & {S^{-g^* V} \otimes i_!i^*g^*j_! Z} & {S^{-g^* V}\otimes i_! f^* j^*j_!Z} & {S^{-g^* V}\otimes i_!f^* Z.}
			\arrow["\epsilon", from=1-3, to=1-4]
			\arrow["\sim", from=1-2, to=1-3]
			\arrow["\eta", from=1-1, to=1-2]
			\arrow["\sim"', from=1-1, to=3-1]
			\arrow["\sim"', from=3-1, to=4-1]
			\arrow["{S^{-g^* V} \otimes \,\eta}", from=4-1, to=4-2]
			\arrow["\sim", from=1-2, to=4-2]
			\arrow["\sim", from=1-3, to=4-3]
			\arrow["{S^{-g^* V}\otimes \,\sim}", from=4-2, to=4-3]
			\arrow["{S^{-g^* V}\otimes \,\epsilon}", from=4-3, to=4-4]
			\arrow["\sim", from=1-4, to=2-4]
			\arrow["\sim", from=3-4, to=4-4]
			\arrow["\sim", from=2-4, to=3-4]
		\end{tikzcd}
		\]
		The proof of this claim is a tedious diagram chase which we omit. From this we conclude that it suffices to check that the Beck-Chevalley transformation is an equivalence on objects in the image of the functor $\CC_{X'}\rightarrow \CC_{X'}[(\bbS^{\rep}_{X'})^{-1}]$. However on such objects the Beck-Chevalley transformation is simply given by the image of the Beck-Chevalley transformation for $\CC_\bullet$ and so an equivalence. In exactly the same way one shows that if $F\colon \CC\rightarrow \CD$ is a functor between pointed equivariantly presentable global categories which preserves $\Orb$-colimits then $\Stab^{\Orb}(F)$ again preserves $\Orb$-colimits.
	\end{proof}
	
	\begin{proposition}\label{prop:Stab_eq_is_spectra}
		$\Stab^\Orb(\Spc_\bullet) \simeq \Sp_\bullet$.
	\end{proposition}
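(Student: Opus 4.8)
The plan is to unwind both sides and match them against the definition of $\Sp_\bullet$ recorded in \Cref{ex:eq_spectra}. (Throughout I read $\Stab^{\Orb}(\Spc_\bullet)$ as $\Stab^{\Orb}$ applied to the pointification $\Spc_{\bullet,\ast}=(\Spc_\bullet)_\ast$, since $\Stab^{\Orb}$ was only defined on pointed inputs.) On the one hand, $\Sp_\bullet$ is by definition the initial functor $\Glo^{\op}\rightarrow\CAlg(\PrLun)$ under $\Spc_{\bullet,\ast}$ at which the representation spheres become pointwise invertible; by \cite{Ro15} (see \Cref{prop:Robalo}) this initial object is computed pointwise, so $\Sp_G\simeq\Spc_{G,\ast}[(\bbS^{\rep}_G)^{-1}]$ as a presentably symmetric monoidal category, with restriction along $\alpha$ induced by $\alpha^{*}\colon\Spc_{G,\ast}\rightarrow\Spc_{H,\ast}$. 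On the other hand, unwinding \Cref{def:Stab_Orb}: as an equivariantly presentable global category, $\Spc_{\bullet,\ast}$ is tensored over itself, so the lift of \Cref{prop:Orb_pres_coh_tensoring} together with the augmentation of \Cref{def:aug_rep} is precisely the diagram of \emph{unit modules} $\CB G\mapsto(\Spc_{G,\ast},\bbS^{\rep}_G,\Spc_{G,\ast})$ in $\Mod(\PrLun)_{\aug}$; applying $\bbI$ and forgetting down to $\PrLun$ then gives $\Stab^{\Orb}(\Spc_{\bullet,\ast})_G\simeq\Spc_{G,\ast}[(\bbS^{\rep}_G)^{-1}]$, this time as the module-level localization of the unit module.

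To finish from here I would proceed as follows. First, by \cite{Ro15} the module-level localization $\Spc_{G,\ast}[(\bbS^{\rep}_G)^{-1}]$ of the unit module agrees, as a presentable category, with the underlying category of Robalo's symmetric monoidal localization, and hence with $\Sp_G$; moreover the extensions of both constructions from $\Glo$ to $\bbF_{\gl}$ by products (\Cref{prop:inverting_objects_preserves_products} on one side, the analogous product formula for $\Sp_\bullet$ on the other) agree, which gives a fibrewise equivalence $\Stab^{\Orb}(\Spc_{\bullet,\ast})\simeq\Sp_\bullet$. Second, and this is the real content, I must check that this equivalence is natural in $\Glo^{\op}$: both sides are obtained by applying ``localize at the representation spheres'' pointwise to the diagram $\Spc_{\bullet,\ast}$, the localization being in each case the value of the left adjoint to the inclusion of the representation-sphere-inverted objects; since such a left adjoint is unique, the two pointwise constructions assemble into equivalent functors $\Glo^{\op}\rightarrow\Cat$. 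Third, forgetting the symmetric monoidal structure then yields the asserted equivalence of global categories.

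The step I expect to be the main obstacle is the second one: reconciling the two localization formalisms---Robalo's symmetric monoidal localization, used to \emph{define} $\Sp_\bullet$, and the module-level functor $\CD\mapsto\CD[\bbS^{-1}]$ packaged in $\bbI$---coherently at the level of $\Glo^{\op}$-diagrams rather than just fibrewise. A cleaner way to organize the whole argument, which I would actually pursue, is to proceed by universal properties and never unwind the localizations in families: by \Cref{ex:S-spaces} and \Cref{ex:eq_spaces} the global category $\Spc_\bullet$ is the free equivariantly presentable global category on a point, hence $\Spc_{\bullet,\ast}$ is the free \emph{pointed} equivariantly presentable global category on a point, and therefore by \Cref{thm:Stab_P}, composing left adjoints, $\Stab^{\Orb}(\Spc_{\bullet,\ast})$ is the free $\Rep$-stable equivariantly presentable global category on a point. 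It then remains to check that $\Sp_\bullet$ satisfies this same universal property, which follows from \Cref{prop:Robalo} applied fibrewise together with the identification of the $\Rep$-stability condition with fibrewise $\bbS^{\rep}$-locality of the canonical $\Spc_{\bullet,\ast}$-module structure on an equivariantly presentable global category.
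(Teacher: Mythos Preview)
Your proposal is correct, and in fact considerably more careful than the paper's own argument. The paper's proof is a single sentence: ``By definition the diagram $\Sp_\bullet$ is given by pointwise stabilizing the diagram $\Spc_\bullet$ at the representation spheres, see~\Cref{ex:eq_spectra}.'' In other words, the paper simply asserts that the two pointwise-localization constructions coincide and leaves it at that.

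You correctly isolate the one genuine issue the paper glosses over: $\Sp_\bullet$ is defined via the $\CAlg(\PrLun)$-level universal property of \Cref{ex:eq_spectra}, whereas $\Stab^{\Orb}$ is built from the module-level functor $\bbI$ of \Cref{def:Stab_Orb}, and one must know these agree as diagrams, not just fibrewise. Your resolution---that both are values of a left adjoint applied pointwise to $\Spc_{\bullet,\ast}$, hence agree by uniqueness---is exactly right and is presumably what the paper has in mind; Robalo's results guarantee that the initial object under $\Spc_{\bullet,\ast}$ in the diagram category is computed pointwise.

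Your alternative route via universal properties (free $\Rep$-stable equivariantly presentable global category on a point) is a genuinely different strategy and would work, but note that the required universal property of $\Sp_\bullet$ is not the one recorded in \Cref{thm:Sp_univ} (which concerns \emph{equivariant} stability, not $\Rep$-stability), so you would still need the fibrewise-localization argument to verify it. In that sense the alternative approach does not actually avoid the step you flagged as the main obstacle; it just repackages it.
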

	
	\begin{proof}
		By definition the diagram $\Sp_\bullet$ is given by pointwise stabilizing the diagram $\Spc_\bullet$ at the representation spheres, see~\Cref{ex:eq_spectra}.
	\end{proof}
	
	\subsection{Globally presentable Rep-stabilization}
	
	We now repeat the previous definitions for $\PrL{\Glo}{\L}$.
	
	\begin{definition}
		We say a globally presentable global category $\CC$ is pointed if each $\CC_G$ is pointed. Given a pointed globally presentable global category $\CC$ we say it is \textup{$\Rep$-stable} if it is $\Rep$-stable as an equivariantly presentable global category. We define $\PrLast{\Glo}{\L}$ and $\PrLVst{\Glo}{\L}$ to be the full subcategory of $\PrLast{\Glo}{\L}$ spanned by the pointed and $\Rep$-stable global categories respectively.
	\end{definition}
	
	\begin{definition}
		Consider a morphism $F\colon \CC\rightarrow \CC'$ in $\PrLast{\Glo}{\L}$. We say \textit{F exhibits $\CC'$ as the globally presentable $\Rep$-stabilization of $\CC$} if the map 
		\[
		\Hom_{\PrLast{\Glo}{\L}}(\CC',\CD)\rightarrow \Hom_{\PrLast{\Glo}{\L}}(\CC,\CD)
		\] is an equivalence for every $\CD\in \PrLVst{\Glo}{\L}$.
	\end{definition}
	
	The construction of globally presentable $\Rep$-stabilizations is significantly more subtle then the analogous story of equivariantly presentable $\Rep$-stabilizations. In particular the construction of globally presentable $\Rep$-stabilizations cannot be as simple as pointwise inverting the action of representation spheres. For example we remind the reader that while $\Spc_{\bullet}$ is globally presentable, $\Sp_\bullet$ is not. 
	
	Nevertheless, imitating arguments of \cite{Ro15}*{Section 2} one can show that globally presentable $\Rep$-stabilizations always exist. We will not consider the finer aspects of this construction, but instead content ourselves with the observation that when the globally presentable global category $\CC$ is presented as the globalization of some other global category, we can obtain an explicit description of $\Stab^\Glo(\CC)$.
	
	\begin{theorem}\label{thm:Stab_Glo_of_Free}
		Let $\CC$ be an object of $\PrLast{\Glo}{\Orb}$. The globally presentable global category $\Glob(\Stab^\Orb(\CC))$ is the globally presentable $\Rep$-stabilization of $\Glob(\CC)$.
	\end{theorem}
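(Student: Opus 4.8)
The plan is to reduce the universal property of \Cref{thm:Stab_Glo_of_Free} to a formal diagram chase through the two adjunctions already in hand — $\Glob\dashv\fgt$ from \Cref{thm:Free_colimit} and $\Stab^{\Orb}\dashv(\text{incl})$ from \Cref{thm:Stab_P} — after checking that both restrict compatibly to the subcategories of pointed objects. Writing $\eta_{\CC}\colon\CC\to\Stab^{\Orb}(\CC)$ for the unit of $\Stab^{\Orb}\dashv(\text{incl})$, the morphism exhibiting $\Glob(\Stab^{\Orb}(\CC))$ as the globally presentable $\Rep$-stabilization of $\Glob(\CC)$ will be $\Glob(\eta_{\CC})$. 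The one genuinely substantive ingredient — and the step I expect to be the main obstacle — is the accompanying fact that $\Glob(\Stab^{\Orb}(\CC))$ is \emph{itself} $\Rep$-stable, i.e.\ that globalization preserves $\Rep$-stability (this is \Cref{intro_thm_D}); everything else is bookkeeping.

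\textbf{The formal part.} Since limits and colimits in $\Glob(\CD')_X\coloneqq\laxlimdag(\pi_X^*\CD')$ are computed pointwise (\Cref{prop:colim_in_part_laxlim} and \Cref{prop:lim_in_part_laxlim}, applied with empty diagram shape, using that the restriction functors of a pointed equivariantly presentable global category are left adjoints and hence preserve the zero object), the section $X\mapsto 0$ is simultaneously initial and final in each $\Glob(\CD')_X$; thus $\Glob$ sends pointed global categories to pointed ones, and since any morphism in $\PrL{\Glo}{\L}$ between pointed objects automatically preserves zero objects, $\Glob\dashv\fgt$ restricts to an adjunction $\Glob\colon\PrLast{\Glo}{\Orb}\rightleftarrows\PrLast{\Glo}{\L}\cocolon\fgt$ (both inclusions of pointed subcategories being full). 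Likewise $\fgt$ sends $\PrLVst{\Glo}{\L}$ into $\PrLVst{\Glo}{\Orb}$, since $\Rep$-stability of a globally presentable global category is by definition a condition on its underlying equivariantly presentable one. Granting these, for any $\CD\in\PrLVst{\Glo}{\L}$ we obtain natural equivalences
\begin{align*}
\Hom_{\PrLast{\Glo}{\L}}\!\big(\Glob(\Stab^{\Orb}(\CC)),\CD\big)&\simeq\Hom_{\PrLast{\Glo}{\Orb}}\!\big(\Stab^{\Orb}(\CC),\fgt\CD\big)\\
&\simeq\Hom_{\PrLast{\Glo}{\Orb}}(\CC,\fgt\CD)\simeq\Hom_{\PrLast{\Glo}{\L}}(\Glob(\CC),\CD),
\end{align*}
where the outer equivalences use $\Glob\dashv\fgt$ and the middle one uses $\Stab^{\Orb}\dashv(\text{incl})$ together with $\fgt\CD\in\PrLVst{\Glo}{\Orb}$; unwinding the units shows the composite is restriction along $\Glob(\eta_{\CC})$. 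This is exactly the asserted universal property.

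\textbf{Globalization preserves $\Rep$-stability.} It remains to show $\Glob(\CE)$ is $\Rep$-stable whenever $\CE\in\PrLVst{\Glo}{\Orb}$; applied to $\CE=\Stab^{\Orb}(\CC)$ this shows the target is honestly $\Rep$-stable, and applied to the case of equivariant spaces, using \Cref{prop:Stab_eq_is_spectra} and $\Spbulgl\simeq\Glob(\Sp_\bullet)$, it yields \Cref{intro_thm_D}. The strategy is to identify the canonical (hence, by \Cref{bullet_spaces_initial}, unique colimit-preserving) tensoring of $\Glob(\CE)$ by $\Spc_{\bullet,\ast}$ with the evident pointwise one on partially lax limits: for $A\in\Spc_{G,\ast}$ and a section $s\in\Glob(\CE)_G=\laxlimdag_{(\Glo_{/G})^{\op}}\pi_G^*\CE$ one sets $(A\otimes s)(\alpha)\coloneqq\alpha^*A\otimes_{\CE_H}s(\alpha)$ for $\alpha\colon\CB H\to\CB G$. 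One must check that this really lands in the partially lax limit — for $\beta$ marked, $\beta^*(\alpha^*A\otimes s(\alpha))\simeq(\alpha\beta)^*A\otimes\beta^*s(\alpha)\simeq(\alpha\beta)^*A\otimes s(\alpha\beta)$ since $\beta^*$ is $\Spc$-linear and $s$ preserves marked edges — that it is colimit-preserving in each variable and makes the restriction functors of $\Glob(\CE)$ into $\Spc_{\bullet,\ast}$-linear functors, and, the main technical point, that it assembles into a genuine $\Spc_{\bullet,\ast}$-module structure; uniqueness then forces it to agree with the canonical one. (Equivalently, one may factor the evaluation $\ev_\alpha\colon\Glob(\CE)_G\to\CE_H$ as $\ev_{\id_H}\circ\alpha^*$, note that $\alpha^*$ is linear as a restriction functor of $\Glob(\CE)$ and that $\ev_{\id_H}$, being by \Cref{prop:left_adj_to_res_in_lim} the right adjoint of the fully faithful linear functor $I_H$, is strong $\Spc_{H,\ast}$-linear, using \Cref{rem:iota_shriek_module_map} for the orbit generators.) Granting the pointwise description, for $S^V\in\bbS^{\rep}_G$ the functor $S^V\otimes-$ acts on $s$ by $\alpha\mapsto S^{\alpha^*V}\otimes s(\alpha)$; since $\CE$ is $\Rep$-stable each $S^{\alpha^*V}\otimes-$ is an equivalence on $\CE_H$, and the inverses $S^{-\alpha^*V}\otimes-$ are again compatible with the marked edges (same computation), hence assemble to a functor on $\Glob(\CE)_G$ inverse to $S^V\otimes-$. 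So $\Glob(\CE)$ is $\Rep$-stable, completing the proof; the real work is concentrated in the identification of the $\Spc_{\bullet,\ast}$-tensoring on a globalization with the pointwise one.
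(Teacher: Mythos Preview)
Your proposal is correct and follows essentially the same route as the paper: show that $\Glob$ preserves $\Rep$-stability by identifying the $\Spc_{\bullet,\ast}$-tensoring on $\Glob(\CE)_G$ with the pointwise formula $s\mapsto\big(\alpha\mapsto\alpha^*A\otimes s(\alpha)\big)$, then run the three-step chain of adjunction equivalences. The paper simply asserts the pointwise description of the tensoring (``We compute that\ldots''), whereas you correctly flag that matching this with the \emph{canonical} tensoring via the uniqueness clause of \Cref{bullet_spaces_initial} is where the actual content lies; your sketch of that identification (via linearity of $\alpha^*$ and of $\ev_{\id_H}$) is more detailed than what the paper provides.
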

	
	\begin{proof}
		We claim that $\Glob(-)$ preserves $\Rep$-stable global categories. To this end fix a $\CD\in \PrLVst{\Glob}{\Orb}$. First note that because colimits and limits are computed pointwise in $\Glob(\CD)$, it is again a pointed global category. Now consider $S^V \in \bbS^\rep_G$. We compute that the tensoring of $S^V$ on $\Glob(\CD)_G$ is given by \[\{X_f\}_{f\colon \CB H\rightarrow \CB G} \mapsto \{f^*(S^V)\otimes X_f\}_{f\colon \CB H\rightarrow \CB G} \simeq  \{S^{f^*(V)}\otimes X_f\}_{f\colon \CB H\rightarrow \CB G}.\] Because each $S^{f^*(V)}$ acts invertibly on $\CD_H$, we conclude that tensoring by $S^V$ on $\Glob(\CD)_G$ is an equivalence. Therefore $\Glob$ restricts to a functor $\PrLVst{\Glob}{\Orb}\rightarrow \PrLVst{\Glo}{\L}$. The result now follows from the following series of natural equivalences:
		\begin{align*}
			\Hom_{\PrLVst{\Glo}{\L}}(\Glob(\Stab^\Orb(\CC)),\CD) &\simeq  \Hom_{\PrLVst{\Glo}{\Orb}}(\Stab^\Orb(\CC),\CD)\\
			&\simeq \Hom_{\PrLast{\Glo}{\Orb}}(\CC,\CD)\\
			&\simeq  \Hom_{\PrLast{\Glo}{\L}}(\Glob(\CC),\CD).
		\end{align*}
	\end{proof}
	
	\begin{corollary}
		The global category of globally equivariant spectra $\Spbulgl$ is the free globally presentable $\Rep$-stable global category on a point.
	\end{corollary}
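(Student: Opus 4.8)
The plan is to derive this as a formal consequence of \Cref{thm:Stab_Glo_of_Free}, by reducing the asserted universal property of $\Spbulgl$ to that of the free pointed equivariantly presentable global category on a point.

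First I would assemble the relevant identifications. By \Cref{intro_thm_B} one has $\Spbulgl \simeq \Glob(\Sp_\bullet)$, and by \Cref{prop:Stab_eq_is_spectra} the global category $\Sp_\bullet$ is the $\Rep$-stabilization $\Stab^\Orb(\Spc_{\bullet,\ast})$ of pointed equivariant spaces; hence $\Spbulgl \simeq \Glob(\Stab^\Orb(\Spc_{\bullet,\ast}))$. Applying \Cref{thm:Stab_Glo_of_Free} with $\CC = \Spc_{\bullet,\ast} \in \PrLast{\Glo}{\Orb}$ then identifies $\Spbulgl$ with the globally presentable $\Rep$-stabilization of $\Glob(\Spc_{\bullet,\ast})$; in particular $\Spbulgl$ is $\Rep$-stable, so it lies in $\PrLVst{\Glo}{\L}$ and $\Hom$-sets out of it agree whether formed in $\PrLVst{\Glo}{\L}$ or in $\PrLast{\Glo}{\L}$.

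Next I would identify $\Glob(\Spc_{\bullet,\ast})$ as the free \emph{pointed} globally presentable global category on a point. The adjunction $\Glob \dashv \fgt$ of \Cref{thm:Free_colimit} restricts to the pointed subcategories: $\fgt$ visibly preserves pointedness, and so does $\Glob$, since by \Cref{prop:colim_in_part_laxlim} and \Cref{prop:lim_in_part_laxlim} the (co)limits in a partially lax limit of presentable categories are computed fiberwise, so $\Glob(\CC)$ is pointed whenever $\CC$ is. Since $\Spc_{\bullet,\ast}$ is the pointification of $\Spc^\Orb_\bullet = \Spc_\bullet$, which by \Cref{ex:S-spaces} corepresents $\CC \mapsto \core(\Gamma\CC)$ on $\PrL{\Glo}{\Orb}$, and since $\CB e$ is terminal in $\Glo$ so that $\Gamma(-) \simeq (-)(\CB e)$, we obtain $\Hom_{\PrLast{\Glo}{\Orb}}(\Spc_{\bullet,\ast}, \CE) \simeq \core(\CE(\CB e))$ for every pointed equivariantly presentable $\CE$; transporting along the restricted adjunction shows that $\Glob(\Spc_{\bullet,\ast})$ corepresents $\CD \mapsto \core(\CD(\CB e))$ on $\PrLast{\Glo}{\L}$.

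Putting this together, for $\CD \in \PrLVst{\Glo}{\L}$ there are natural equivalences
\begin{align*}
\Hom_{\PrLVst{\Glo}{\L}}(\Spbulgl, \CD) &\simeq \Hom_{\PrLast{\Glo}{\L}}\big(\Glob(\Stab^\Orb(\Spc_{\bullet,\ast})), \CD\big) \\
&\simeq \Hom_{\PrLast{\Glo}{\L}}\big(\Glob(\Spc_{\bullet,\ast}), \CD\big) \\
&\simeq \core(\CD(\CB e)),
\end{align*}
where the first uses $\Spbulgl \simeq \Glob(\Stab^\Orb(\Spc_{\bullet,\ast}))$ together with fullness of $\PrLVst{\Glo}{\L} \subset \PrLast{\Glo}{\L}$, the second is the defining property of the $\Rep$-stabilization from \Cref{thm:Stab_Glo_of_Free}, and the third is the identification of the previous paragraph. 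This exhibits $\Spbulgl$ as the free globally presentable $\Rep$-stable global category on a point. Since all the substantive content is already contained in \Cref{thm:Stab_Glo_of_Free} and in the identification $\Spbulgl \simeq \Glob(\Sp_\bullet)$, I do not expect a genuine obstacle here; the only point needing care is the routine bookkeeping that $\Glob \dashv \fgt$ restricts to the pointed subcategories and that $\Spc_{\bullet,\ast}$ indeed has the universal property of the free pointed equivariantly presentable global category on a point.
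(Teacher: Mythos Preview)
Your proof is correct and follows essentially the same approach as the paper: apply \Cref{thm:Stab_Glo_of_Free} to (pointed) equivariant spaces and invoke \Cref{prop:Stab_eq_is_spectra} together with $\Spbulgl \simeq \Glob(\Sp_\bullet)$. The paper's one-line proof just says ``apply the previous result to $\CC = \Spc_\bullet$, using \Cref{prop:Stab_eq_is_spectra}''; your version is more careful about the pointedness bookkeeping (working with $\Spc_{\bullet,\ast}$ and explicitly restricting the $\Glob \dashv \fgt$ adjunction to pointed categories), but the substance is identical.
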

	
	\begin{proof}
		Apply the previous result to $\CC = \Spc_\bullet$, using \Cref{prop:Stab_eq_is_spectra}.
	\end{proof}
	
	\begin{remark}
		We expect that both localizations $\PrL{\Glo}{\L}\rightarrow \PrLVst{\Glo}{\L}$ and $\PrL{\Glo}{\L}\rightarrow {\mathrm{Pr}^{\L}_{\Glo,\Orb\textup{-}\textup{st}}}$ are smashing. This would imply, among other things, that because they agree on the unit $\Spcbulgl$ they in fact agree as functors. In particular we would conclude that a globally presentable global category is equivariantly stable if and only if it is representation stable.
	\end{remark}
	
	\subsection{Symmetric monoidal structures}
	
	Observe that the partially lax limit of symmetric monoidal categories is canonically symmetric monoidal by taking the tensor product pointwise, see \cite{LNP}*{Section 3} for a more detailed discussion. This gives a lift of $\Spbulgl\colon \bbF_{\gl}^{\op}\rightarrow \PrLun$ to a functor into $\CAlg(\PrLun)$. We will write $\Spbulgl^{\otimes}$ for this functor. 
	
	The goal of this subsection is to show that $\Spbulgl^{\otimes}$ is the initial globally presentably symmetric monoidal $\Rep$-stable global category. To elaborate on this we recall that by~\cite{MW22}*{Proposition 8.2.9}, $\PrL{\Glo}{\L}$ admits a symmetric monoidal structure given by a parametrized version of the Lurie tensor product. As usual functors out of the tensor product corepresent functors out of the product which preserve global colimits in both variables. We call an object $\CC$ of the category $\CAlg(\PrL{\Glo}{\L})$ a globally presentably symmetric monoidal global category. To understand this it is useful to be more explicit about the structure and properties implicit in presentable symmetric monoidality. We will do this in the generality of an arbitrary orbital category $T$.
	
	\begin{definition}
		A symmetric monoidal $T$-category is a finite product preserving functor $\bbF_T^{\op}\rightarrow \Cat^\otimes$. We define $\Cat_T^\otimes$ to be the functor category $\Fun^{\times}(\bbF_T^{\op},\Cat^\otimes)$.
	\end{definition}
	
	\begin{definition}
		We define $\PrLotimes{T}{\L}$ to be the subcategory of $\Cat_T^\otimes$ spanned on objects by those $\CC$ such that 
		\begin{enumerate}
			\item the functor $\CC$ lifts to a functor $\CC\colon \bbF_T^{\op}\rightarrow \CAlg(\PrL{}{\L})$;
			\item for all $f\colon X \rightarrow Y$ in $\bbF_T$, the functor $f^*$ has a further left adjoint $f_!$;
			\item the square obtained by applying $\CC$ to a pullback square in $\bbF_T$ is left adjointable.
		\end{enumerate}
		On morphisms $\PrLotimes{T}{\L}$ is spanned by those functors $F\colon \CC\rightarrow \CD$ in $\Cat_T^\otimes$ such that each functor $F_G$ admits a right adjoint and the square
		\[\begin{tikzcd}
			{\CC_X} & {\CD_X} \\
			{\CC_Y} & {\CD_Y}
			\arrow["{F_X}", from=2-1, to=2-2]
			\arrow["{F_Y}", from=1-1, to=1-2]
			\arrow["{f^*}"', from=1-1, to=2-1]
			\arrow["{f^*}"', from=1-2, to=2-2]
		\end{tikzcd}\]
		is left adjointable for all $f\in T$. 
	\end{definition}
	
	\begin{proposition}\label{prop:pres_sym_mon}
		There exists a fully faithful forgetful functor $\CAlg(\PrL{T}{\L})\subset \PrLotimes{T}{\L}$, with essential image those $\CC\colon \bbF_T^{\op}\rightarrow \CAlg(\PrL{}{\L})$ which satisfy the left projection formula, i.e.~ those $\CC\in \PrLotimes{T}{\L}$ such that for all morphisms $f\colon X \rightarrow Y$ in $\bbF_T$, the canonical natural transformation 
		\[f_!(f^* X\otimes Y) \rightarrow f_!f^*X \otimes f_! Y \xrightarrow{\epsilon \otimes Y} X\otimes f_! Y\] is an equivalence.
	\end{proposition}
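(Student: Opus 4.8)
The plan is to read the statement off the universal property of the parametrized Lurie tensor product on $\PrL{T}{\L}$ recorded in \cite{MW22}*{Corollary 8.2.5, Proposition 8.2.9}: the tensor product is characterized by the fact that $T$-cocontinuous $T$-functors $\CC\otimes\CD\to\CE$ agree with $T$-functors $\CC\times\CD\to\CE$ that are $T$-cocontinuous in each variable separately, and its unit is $\Spc_\bullet^T$. First I would construct the comparison functor. An object of $\CAlg(\PrL{T}{\L})$ is a $T$-presentable $T$-category $\CC$ equipped with a $T$-cocontinuous multiplication $m\colon\CC\otimes\CC\to\CC$, a unit $\Spc_\bullet^T\to\CC$, and coherences; running $m$ and the higher structure maps $\CC^{\otimes n}\to\CC$ through the universal property, and using that $\Cat_T^\otimes=\Fun^\times(\bbF_T^{\op},\Cat^\otimes)$ is the category of commutative algebra objects in $(\Cat_T,\times)$, this repackages $\CC$ as a symmetric monoidal $T$-category whose underlying $T$-category is the given $\CC$. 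That assignment is the claimed forgetful functor.

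Next I would show it lands in $\PrLotimes{T}{\L}$ and identify the image. Conditions (1)--(3) in the definition of $\PrLotimes{T}{\L}$ are automatic: $\CC$ underlies a $T$-presentable $T$-category, so each $\CC_X$ is presentable, each $f^*$ is cocontinuous, the left adjoints $f_!$ exist, and base change holds for all morphisms because $\PrL{T}{\L}=\PrL{T}{T}$; and each $\CC_X$ is presentably symmetric monoidal, its multiplication being the value at $X$ of a $T$-functor that is $T$-cocontinuous in each variable and hence cocontinuous in each variable fiberwise. The real content is that the image is the full subcategory cut out by the left projection formula. For the forward inclusion, the assertion that $m\colon\CC\times\CC\to\CC$ is $T$-cocontinuous --- and not merely fiberwise cocontinuous --- in each variable means, through the identification of $T$-cocontinuous $T$-functors with parametrized left adjoints (\Cref{rem:PrL_the_same}, \cite{MW22}*{Proposition 6.3.1}), precisely that for fixed $A\in\CC_Y$ the endo-$T$-functor $A\otimes(-)$ commutes with the functors $f_!$; unwinding this for a map $f\colon X\to Y$ in $\bbF_T$ and $B\in\CC_X$ gives exactly the equivalence $f_!(f^*A\otimes B)\xrightarrow{\ \sim\ }A\otimes f_!B$. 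Conversely, from the left projection formula --- and, by the symmetry of $\otimes$, the analogous formula in the other variable --- the pointwise multiplication is $T$-cocontinuous in each variable, so by \cite{MW22}*{Corollary 8.2.5} it promotes to a $T$-cocontinuous $\CC\otimes\CC\to\CC$; the higher multiplications inherit $T$-cocontinuity by the same argument, and the resulting commutative algebra structure in $\PrL{T}{\L}$ is the essentially unique lift of the given symmetric monoidal structure. Fully faithfulness is then immediate: a morphism of commutative algebras in $\PrL{T}{\L}$ is a $T$-cocontinuous $T$-functor compatible with the multiplications, i.e.~a $T$-cocontinuous strong symmetric monoidal $T$-functor, which is exactly a morphism in $\PrLotimes{T}{\L}$ between objects of the image, so the two mapping spaces agree.

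I expect the main obstacle to be the second step, namely making precise that ``$T$-cocontinuous in each variable'' in the parametrized sense demanded by \cite{MW22}*{Corollary 8.2.5} unwinds to exactly the one displayed left projection formula (over every slice $T_{/Y}$) together with fiberwise cocontinuity, and that the passage from the binary multiplication to the full $E_\infty$-structure, as well as the matching of morphisms, is formal. This is bookkeeping with the machinery of \cite{MW22} rather than a new idea, but it is where the actual work lies; once it is settled, the object-level and morphism-level comparisons follow directly.
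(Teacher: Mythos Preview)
Your proposal is correct and follows essentially the same approach as the paper: both regard $\CAlg(\PrL{T}{\L})$ and $\PrLotimes{T}{\L}$ as subcategories of $\Cat_T^\otimes$ and reduce the comparison to the observation that the multiplication being $T$-cocontinuous in each variable amounts to fiberwise cocontinuity together with the left projection formula. The paper's proof is considerably terser---it simply asserts this identification and the subcategory claim---whereas you spell out the construction of the forgetful functor and the morphism-level matching, but there is no substantive difference in strategy.
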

	
	\begin{proof}
		Both $\CAlg(\PrL{T}{\L})$ and $\PrLotimes{T}{\L}$ are subcategories of $\Cat_T^\otimes$, and therefore it suffices to compare the images. For this we note that an object $\CC\in \Cat^\otimes_T$ is in $\CAlg(\PrL{T}{\L})$ if and only if $\CC$ is presentable and the tensor product commutes with fiberwise and $T$-groupoid indexed colimits in each variable. The first two statements are equivalent to the claim that $\CC$ factors through $\CAlg(\PrLun)$, while the final statement is equivalent to the claim that the left projection formula holds.
	\end{proof}
	
	Write $\CAlg(\PrL{\Glo}{\L})_{\rep\text{-}\st}$ for the full subcategory of $\CAlg(\PrL{\Glo}{\L})$ spanned by those $\CC$ which are representation stable. Recall that our goal is to show that $\Spbulgl^{\otimes}$ is the initial object of this $\infty$-category. To do this we will take a slightly circuitous route. We define $\PrLVstotimes{\Glo}{\L}$ via the pullback
	\[\begin{tikzcd}
		{\PrLVstotimes{\Glo}{\L}} & {\PrLotimes{\Glo}{\L}} \\
		{\PrLVst{\Glo}{\L}} & {\PrL{\Glo}{\L}.}
		\arrow[""{name=0, anchor=center, inner sep=0}, from=2-1, to=2-2]
		\arrow[from=1-2, to=2-2]
		\arrow[from=1-1, to=2-1]
		\arrow[from=1-1, to=1-2]
		\arrow["\lrcorner"{anchor=center, pos=0.125}, draw=none, from=1-1, to=0]
	\end{tikzcd}\]
	
	\begin{lemma}
		The symmetric monoidal global category $\Spbulgl^\otimes$ admits a unique map to any object $\CD\in \CAlg(\PrL{\Glo}{\L})_{\rep\text{-}\st}$.
	\end{lemma}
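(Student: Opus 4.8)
The plan is to deduce the lemma from two inputs: that globally equivariant spaces $\Spc^\Glo_\bullet\simeq\Spcbulgl$, with their cartesian symmetric monoidal structure, form the \emph{initial} object of $\CAlg(\PrL{\Glo}{\L})$; and that the canonical comparison $\Spcbulgl\to\Spbulgl$ is a symmetric monoidal $\Rep$-stabilization. First I would note that $\Spc^\Glo_\bullet$ is the unit of the parametrized Lurie tensor product on $\PrL{\Glo}{\L}$ (by \Cref{ex:eq_spaces} and \cite{MW22}), hence---being the monoidal unit---the initial commutative algebra; thus $\Hom_{\CAlg(\PrL{\Glo}{\L})}(\Spcbulgl^\times,\CD)$ is contractible for \emph{every} $\CD\in\CAlg(\PrL{\Glo}{\L})$, and there is an essentially unique $u\colon\Spcbulgl^\times\to\Spbulgl^\otimes$. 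One should also check, as a preliminary, that $\Spbulgl^\otimes$ actually lies in $\CAlg(\PrL{\Glo}{\L})\subseteq\PrLotimes{\Glo}{\L}$, i.e.\ that it satisfies the left projection formula of \Cref{prop:pres_sym_mon}; this is inherited from the projection formula in $\Sp_\bullet$, using that all relevant left adjoints and colimits in $\Glob(\Sp_\bullet)$ are computed pointwise (\Cref{thm:Free_is_presentable}, \Cref{const:left_adj_to_S}). Granting this, the lemma reduces to showing that
\[
u^*\colon\Hom_{\CAlg(\PrL{\Glo}{\L})}(\Spbulgl^\otimes,\CD)\longrightarrow\Hom_{\CAlg(\PrL{\Glo}{\L})}(\Spcbulgl^\times,\CD)
\]
is an equivalence whenever $\CD$ is $\Rep$-stable, since then the target, and hence the source, is contractible.

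For this equivalence I would identify $u$ as the globalization of the symmetric monoidal localization $\Spc_{\bullet,\ast}\to\Stab^\Orb(\Spc_{\bullet,\ast})\simeq\Sp_\bullet$ (using \Cref{prop:Stab_eq_is_spectra} and $\Spbulgl\simeq\Glob(\Sp_\bullet)$ from \Cref{intro_thm_B}); its underlying morphism in $\PrL{\Glo}{\L}$ then exhibits $\Spbulgl$ as the globally presentable $\Rep$-stabilization of $\Spcbulgl$ by \Cref{thm:Stab_Glo_of_Free}, equivalently $\Spbulgl$ is the free globally presentable $\Rep$-stable global category on a point (\Cref{intro_thm_D}). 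So $u$ already induces an equivalence on mapping spaces in $\PrLast{\Glo}{\L}$---equivalently, since $\CD$ is pointed, in $\PrL{\Glo}{\L}$---into any $\Rep$-stable target. To upgrade to mapping spaces of commutative algebras, I would re-run the augmented-category machinery of \Cref{sec:Rep-stable} (the construction of $\bbI$ out of $\CAlg(\PrLun)_{\aug}$, \Cref{def:Stab_Orb}, \Cref{thm:Stab_P}, and \Cref{thm:Stab_Glo_of_Free}) with $\PrLun$ replaced throughout by $\CAlg(\PrLun)$, using \Cref{prop:inverting_objects_preserves_products} to remain inside $\Cat_\Glo^\otimes$ and \Cref{prop:pres_sym_mon} to remain inside the subcategory cut out by the left projection formula. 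This exhibits $u$ as a localization in $\CAlg(\PrL{\Glo}{\L})$ at the representation spheres, so the unique algebra morphism $\Spcbulgl^\times\to\CD$, which inverts the representation spheres because $\CD$ is $\Rep$-stable, factors essentially uniquely through $u$; this is exactly the claimed equivalence.

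The main obstacle is this last step: because naively inverting the representation spheres pointwise destroys global presentability---precisely the phenomenon that forced the indirect construction behind \Cref{thm:Stab_Glo_of_Free}---one cannot simply cite a parametrized form of Robalo's theorem, and the symmetric monoidal $\Rep$-localization must be built by hand while keeping track of global presentability and the left projection formula. Concretely, the work is to upgrade \Cref{def:Stab_Orb} and \Cref{thm:Stab_P} to symmetric monoidal (rather than merely pointed) equivariantly presentable global categories, and then to re-run the globalization argument of \Cref{sec:P-stable} in that setting so as to identify the monoidal $\Rep$-stabilization of the unit $\Spcbulgl^\times$ with $\Spbulgl^\otimes$ and not merely with some larger symmetric monoidal global category. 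Everything else in the argument is formal.
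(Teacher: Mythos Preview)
Your overall strategy---re-run the constructions of Sections~\ref{sec:Glob} and~\ref{sec:Rep-stable} in the symmetric monoidal setting and use that global spaces are the initial commutative algebra---is exactly the paper's approach. However, there is a genuine gap in your preliminary step. You claim the left projection formula for $\Spbulgl^\otimes$ is ``inherited from the projection formula in $\Sp_\bullet$, using that all relevant left adjoints and colimits in $\Glob(\Sp_\bullet)$ are computed pointwise.'' But \Cref{const:left_adj_to_S} only describes $f_!$ for $f\in\bbF_\Orb$; for non-injective $f$---precisely the left adjoints that globalization freely adjoins---the paper explicitly remarks that no explicit description is known, and $\Sp_\bullet$ has no such $f_!$ to inherit from in the first place. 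So your argument establishes the projection formula only for faithful maps, which does not suffice for membership in $\CAlg(\PrL{\Glo}{\L})$ via \Cref{prop:pres_sym_mon}.

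The paper sidesteps this entirely by proving the lemma in the larger category $\PrLotimes{\Glo}{\L}$, where the projection formula is not required: one re-runs the adjunctions $\Free$ and $\Stab^\Orb$ for objects lifting to $\CAlg(\PrLun)$, obtaining that $\Spbulgl^\otimes$ is initial in $\PrLVstotimes{\Glo}{\L}$ and hence admits a unique map to any $\CD\in\CAlg(\PrL{\Glo}{\L})_{\rep\text{-}\st}\subset\PrLVstotimes{\Glo}{\L}$. The projection formula is then established separately, in the subsequent theorem, by a different argument: one uses that $\Spcbulgl$ (being the unit) satisfies it, that $\Sigma^\infty_+$ is strong monoidal and globally cocontinuous, and that suspension spectra generate $\Spbulgl$ under colimits and desuspensions. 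Your route forces you to confront the hard projection-formula check inside the lemma, with an argument that does not reach the non-injective case.
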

	
	\begin{proof}
		This is proven by simply repeating all of the constructions and arguments in Sections \ref{sec:Glob} and \ref{sec:Rep-stable}, but now for objects $\CC\colon \bbF_{\gl}\rightarrow \PrLun$ which additionally lift to $\CAlg(\PrL{}{\L})$. For example one shows that the functor $\Free\colon \PrL{T}{S}\rightarrow \PrL{T}{\L}$ refines to a left adjoint $\PrLotimes{T}{S}\rightarrow \PrLotimes{T}{\L}$, where the definition of $\PrLotimes{T}{S}$ is analogous to that of $\PrLotimes{T}{\L}$. Because partially lax limits of symmetric monoidal categories are computed underlying, the proofs of \Cref{thm:Free_is_presentable} and \Cref{thm:Free_colimit} go through unchanged. Similarly for all other steps leading to the theorem. Given this, the results follows immediately from the fact that $\Spc_{\bullet}$ is an initial object of $\CAlg(\PrL{\Glo}{\L}))$, see \cite{MW22}*{Remark 8.2.6}.
	\end{proof}
	
	\begin{theorem}
		$\Spbulgl^\otimes$ is the initial globally presentable symmetric monoidal $\Rep$-stable global category.
	\end{theorem}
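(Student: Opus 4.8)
The plan is to reduce the statement to the preceding Lemma together with \Cref{intro_thm_D} and \Cref{prop:pres_sym_mon}. The Lemma already supplies, for every $\CD\in\CAlg(\PrL{\Glo}{\L})_{\rep\text{-}\st}$, an essentially unique morphism $\Spbulgl^\otimes\to\CD$; so the only remaining point is to check that $\Spbulgl^\otimes$ is itself an object of $\CAlg(\PrL{\Glo}{\L})_{\rep\text{-}\st}$, after which initiality is formal.

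First I would record that $\Spbulgl$ is globally presentable and $\Rep$-stable: the former is immediate from \Cref{intro_thm_A} and \Cref{intro_thm_B}, since $\Spbulgl\simeq\Glob(\Sp_\bullet)$ and globalization lands in $\PrL{\Glo}{\L}$, and the latter is exactly \Cref{intro_thm_D}. Next I would upgrade this to the symmetric monoidal level. The partially lax limit defining $\Spbulgl$ carries a pointwise tensor product, giving the lift $\Spbulgl^\otimes\colon\bbF_{\gl}^{\op}\to\CAlg(\PrLun)$; to see that this assembles to an object of $\CAlg(\PrL{\Glo}{\L})$ it suffices, by \Cref{prop:pres_sym_mon}, to verify the left projection formula, namely that $f_!(f^*X\otimes Y)\to X\otimes f_!Y$ is an equivalence for every map $f$ in $\bbF_{\gl}$. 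For a faithful $f$ the left adjoint $f_!$ on $\Spbulgl^\otimes\simeq\Glob(\Sp_\bullet^\otimes)$ is described explicitly by \Cref{const:left_adj_to_S}, and the projection formula for it reduces fiberwise to the projection formula in $\Sp_\bullet$, which holds because $\Sp_\bullet$ is an equivariantly presentable symmetric monoidal global category. For general $f$ one argues as in the preceding Lemma: the whole chain of constructions---passage to pointed categories, $\Stab^\Orb(-)$, and $\Glob(-)$---refines to left adjoints between the corresponding categories $\PrLotimes{T}{S}$ and $\PrLotimes{T}{\L}$, so the projection formula is propagated automatically from $\Spc_{\bullet,\ast}^\otimes$. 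Hence $\Spbulgl^\otimes\in\CAlg(\PrL{\Glo}{\L})$, and it is $\Rep$-stable since $\Spbulgl$ is.

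Granting this, the conclusion is immediate: by the Lemma the space of maps $\Spbulgl^\otimes\to\CD$ in $\CAlg(\PrL{\Glo}{\L})$ is contractible for every $\CD\in\CAlg(\PrL{\Glo}{\L})_{\rep\text{-}\st}$, and since the latter is a full subcategory this mapping space coincides with the one computed internally to it; therefore $\Spbulgl^\otimes$ is its initial object. The only place requiring genuine work is the projection-formula check in the second paragraph, and I expect the main effort there to be the careful transport of the constructions of Sections~\ref{sec:Glob} and~\ref{sec:Rep-stable} through the symmetric monoidal forgetful functor of \Cref{prop:pres_sym_mon}---which is precisely the content the preceding Lemma isolates---rather than any new conceptual difficulty.
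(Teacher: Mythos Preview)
Your overall strategy matches the paper's: reduce via the preceding Lemma and \Cref{prop:pres_sym_mon} to checking the left projection formula for $\Spbulgl^\otimes$. The gap is in how you verify that formula for a general (non-faithful) $f$.

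You claim that because the constructions of Sections~\ref{sec:Glob} and~\ref{sec:Rep-stable} refine to left adjoints between the categories $\PrLotimes{T}{S}$ and $\PrLotimes{T}{\L}$, the projection formula is ``propagated automatically from $\Spc_{\bullet,\ast}^\otimes$.'' But look at the definition of $\PrLotimes{T}{\L}$: its objects are only required to satisfy conditions (1)--(3), and \Cref{prop:pres_sym_mon} identifies $\CAlg(\PrL{T}{\L})$ as the strictly smaller full subcategory on which the projection formula holds. The Lemma's refinement lands in $\PrLotimes{\Glo}{\L}$, not in $\CAlg(\PrL{\Glo}{\L})$, so it gives you nothing about the projection formula---that is precisely the residual content the paper isolates \emph{after} the Lemma. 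For non-faithful $f$ the left adjoint $f_!$ on $\Glob(\Sp_\bullet)$ has no explicit description (the paper says as much just before \Cref{const:left_adj_to_S}), so there is no fiberwise reduction available either.

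The paper instead argues directly with generators: the functor $\Sigma^\infty_+\colon \Spc_{\bullet\text{-}\gl,\ast}\to\Spbulgl$ is strong monoidal and globally cocontinuous, and the source, being the unit of $\PrLast{\Glo}{\L}$, satisfies the projection formula; hence the formula holds on suspension spectra. One then observes that the class of objects for which $f_!(f^*E\otimes F)\to E\otimes f_!F$ is an equivalence is closed under colimits and desuspensions in each variable, and that suspension spectra generate $\Spbulgl$ under these operations. This is the missing ingredient in your argument.
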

	
	\begin{proof}
		By the previous lemma, it suffices to prove that $\Spbulgl^\otimes$ is actually an object of $\CAlg(\PrL{\Glo}{\L})$. By \Cref{prop:pres_sym_mon} this amounts to proving that $\Spbulgl^\otimes$ satisfies the left projection formula. Pick a morphism $f\colon X\rightarrow Y$ in $\bbF_\gl$. Consider the functor $\Sigma^\infty_\bullet\colon \mathcal{S}_{\bullet\text{-}\gl,\ast}\rightarrow \Spbulgl$ exhibiting $\Spbulgl$ as the globally presentable $\Rep$-stabilization of $\mathcal{S}_{\bullet\text{-}\gl,\ast}$. The source is the unit of $\PrLast{\Glo}{\L}$ and so satisfies the left projection formula. Because $\Sigma^\infty_+$ is strong monoidal and commutes with global colimits, we conclude that suspension spectra in $\Sp_{X\text{-}\gl}$ and $\Sp_{Y\text{-}\gl}$ satisfy the left projection formula for $f$. That is, the map
		\[f_!(f^* E\otimes F) \rightarrow E\otimes f_! F\] is an equivalence when $E$ and $F$ are in the image of $\Sigma_+^\infty$. Now we note that the collection of objects which satisfy the left projection formula for $f$ is closed under desuspensions and colimits in both $\CC_X$ and $\CC_Y$. Because $\Spbulgl$ is generated as a fiberwise stable global category under fiberwise colimits by suspension spectra, we conclude that $\Spbulgl$ satisfies the left projection formula.
	\end{proof}
	
	\bibliographystyle{plain}
	\bibliography{reference}
\end{document}